\documentclass[11pt]{article}
\usepackage{amsmath}
\usepackage{amssymb,amsmath}
\usepackage{mathrsfs}
\usepackage{amsmath,amsthm,amsfonts,amssymb,bm,euscript}
\usepackage{fancyhdr,amscd}
\usepackage{anysize}
\usepackage{graphicx,epsfig}
\usepackage{amsthm,latexsym,amsmath,amssymb}
\usepackage{amssymb,amsmath}
\usepackage{mathrsfs}
\usepackage{graphicx}
\usepackage{color}
\usepackage{mathrsfs}
\usepackage{cite}
\usepackage{indentfirst}
 \usepackage[titletoc]{appendix}
 \usepackage [latin1]{inputenc}

\usepackage{amsmath, amssymb}
\usepackage{amsthm, amsfonts, mathrsfs}
\usepackage{mathptmx}
\usepackage{fullpage}
\usepackage{amsfonts,graphicx}
\usepackage[colorlinks=true,  linkcolor=blue, citecolor=blue, urlcolor=blue]{hyperref}
\def\C{\mathop{\bf C\kern 0pt}\nolimits}
\def\DD{\mathop{\bf D\kern 0pt}\nolimits}
\def\K{\mathop{\bf K\kern 0pt}\nolimits}
\def\N{\mathop{\bf N\kern 0pt}\nolimits}
\def\Q{\mathop{\bf Q\kern 0pt}\nolimits}
\def\R{\mathop{\bf R\kern 0pt}\nolimits}

\def\V{{\mathbf{V}}}
\def\H{\mathbf{H}}

\def\uu{\textbf{\textit{u}}}
\def\vv{\textbf{\textit{v}}}

\def\C{\mathcal{C}}

\renewcommand{\div}{\mbox{\rm div}\;\!}

\newcommand{\Dv}{{\rm div}}

\newcommand{\beq}{\begin{equation}}
\newcommand{\eeq}{\end{equation}}
\newcommand{\ben}{\begin{eqnarray}}
\newcommand{\een}{\end{eqnarray}}
\newcommand{\beno}{\begin{eqnarray*}}
\newcommand{\eeno}{\end{eqnarray*}}

\newtheorem{theorem}{Theorem}[section]

\newtheorem{lemma}[theorem]{Lemma}

\numberwithin{equation}{section}

\allowdisplaybreaks \numberwithin{equation} {section}

\begin{document}
\title{The unique  solvability   of  strong solution  to the  multi-dimensional  nonhomogeneous incompressible two-phase magnetohydrodynamic  model}
\author{ Lingxin  Jiang \ \     Fuyi  Xu$^{\dag}$\\[2mm]
 { \small   School of Mathematics and  Statistics, Shandong University of Technology,}\\
  { \small Zibo,    255049,  Shandong,    China}
   }
         \date{}
\maketitle
\noindent{\bf Abstract} \ \ The present paper studies the  initial-boundary value problem  of the  nonhomogeneous incompressible two-phase magnetohydrodynamic model with  Landau potential in a bounded smooth domain in $\mathbb{R}^d$($d = 2, 3$). More precisely, we  construct  the existence of  local in time   in three dimension and  the global existence of strong solution  in two  dimension with  arbitrary large data and bounded  density. In addition,  the uniqueness of the solution is proved through the weighted energy estimates, the shift of integrability method and Lagrangian approach.
 \vskip   0.2cm \noindent{\bf Key words: } nonhomogeneous incompressible two-phase magnetohydrodynamic model; well-posedness;  large initial data; bounded  density.
\vskip   0.2cm \footnotetext[1]{$^\dag$Corresponding author.}
\vskip   0.2cm \footnotetext[2]{E-mail addresses: Jianglingxin2025@163.com(L. Jiang), \  zbxufuyi@163.com(F. Xu).}

\setlength{\baselineskip}{20pt}

\vskip .2in
\section{Introduction and Main Results}
\label{pro}
The inhomogeneous  incompressible magnetohydrodynamics (MHD) equations usually describe  the motion of several conducting incompressible immiscible fluids (without surface tension) in presence of a magnetic field.
Mathematically, the model consists of the inhomogeneous  incompressible
Navier-Stokes equations of fluid dynamics and the Maxwell equations of
electromagnetism (see \cite{JCT,KL}). Specifically, the governing equations of nonhomogeneous incompressible MHD can be stated as follows
\begin{align} \label{1.0MHD}
\left\{
\begin{aligned}
&\partial_{t}\rho+\Dv (\rho\mathbf{u})=0,\\
&\rho\partial_{t}\mathbf{u}+\rho(\mathbf{u}\cdot\nabla)\mathbf{u}
-\Dv(\nu D\mathbf{u})+\nabla P
=\mathbf{b}\cdot\nabla \mathbf{b},\\
&\partial_{t}\mathbf{b}+\mathbf{u}\cdot\nabla \mathbf{b}-\mathbf{b}\cdot\nabla\mathbf{u}-\eta\Delta \mathbf{b}=0, \\
&\Dv \mathbf{u}=0,\\
&\Dv \mathbf{b}=0,
\end{aligned}
\right.
\end{align}
where $\rho = \rho(t, x)$, $\boldsymbol{u} = \boldsymbol{u}(t, x)$, $\boldsymbol{b} = \boldsymbol{b}(t, x)$, $ D \mathbf{u} = \frac{1}{2} ( \nabla \mathbf{u} + ( \nabla \mathbf{u} )^t )$,  and $P = P(t, x)$ represent the density, velocity, the magnetic field,  the deformation tensor  and the pressure, respectively.  $\nu\geq0$ is the kinematic viscosity and $\eta\geq0$ is the magnetic diffusivity.

The magnetohydrodynamics (MHD) equations accurately describe the macroscopic motion laws of conducting fluids in electromagnetic fields, and have irreplaceable theoretical value and application prospects in fields such as astrophysics, magnetic confinement fusion, and liquid metal engineering(see \cite{NPX,UM,xu2}). However, based on the continuum hypothesis, the traditional MHD equations face significant challenges in the well-posedness analysis of their mathematical models when dealing with multiphase conducting fluid problems involving phase interface evolution and density discontinuities\cite{MSBKT}, so that people have to study  the interaction of electromagnetic fields with two incompressible, immiscible and electrically conducting fluids, i.e., a two-phase MHD problem. For the phase field $\varphi$, the free energy of two-phase fluids is
$$E(\varphi)=\int_\Omega\big(\frac{1}{2}|\nabla\varphi|^2+\frac{1}{\varepsilon^2}\Psi(\varphi)\big)\mathrm{d}x,$$
where $\Psi(\varphi)$ models the immiscibility of the fluid components.
To preserve the mass conservation, i.e., $\frac{\mathrm{d}}{\mathrm{d}t}\int_{\Omega}\varphi(x,t)\mathrm{d}x=0$,  one  considers the following  Cahn-Hilliard equations
\begin{align}
\left\{
\begin{aligned}
\varphi_t&=\mathrm{div}\left(\kappa\nabla\frac{\partial E}{\partial\varphi}\right)=\kappa\Delta \mu,\\
\mu&=\frac{\partial E}{\partial\varphi}=-\Delta\varphi+\frac{1}{\varepsilon^2}f(\varphi),
\end{aligned}
\right.
\end{align}
where $\mu$ represents the chemical potential which is given by the variational derivative of the energy $E$ with respect
to $\varphi$, $f(\varphi)=\Psi^{\prime}(\varphi)$, and $\kappa$, $\varepsilon$ denote the mobility of the mixture and width of the interfacial layer, respectively.

Then combining the physics of MHD fluids and the phase field approach, researchers \cite{SZ,YMHYH} established a novel model for comprehensively and accurately describing the complex flow behaviors of two incompressible, immiscible, electrically conducting fluids with different viscosities and electrical conductivities, which  is called  the nonhomogeneous incompressible  two-phase magnetohydrodynamic model. The governing equation consists of the Cahn-Hilliard equations, nonhomogeneous incompressible Navier-Stokes equations, and the Maxwell's equations, which are coupled through convection,
stresses, and Lorentz forces. More precisely,  the nonhomogeneous incompressible two-phase magnetohydrodynamic model reads as follows
\begin{align} \label{1.1}
\left\{
\begin{aligned}
&\partial_{t}\rho+\Dv (\rho \mathbf{u})=0,\\
&\rho\partial_{t}\mathbf{u}+\rho(\mathbf{u}\cdot\nabla)\mathbf{u}
-\div(\nu(\phi)D\mathbf{u})+\nabla P
=(\mathbf{b}\cdot\nabla)\mathbf{b}-\div(\nabla\phi\otimes\nabla\phi),\\
&\partial_{t}\mathbf{b}+(\mathbf{u}\cdot\nabla)\mathbf{b}-(\mathbf{b}\cdot\nabla)\mathbf{u}
-\Delta \mathbf{b}=0, \\
&\rho\partial_{t}\phi+\rho\mathbf{u}\cdot\nabla\phi=\Delta\mu,\\
&\rho\mu=-\Delta\phi+\rho\Psi'(\phi),\\
&\Dv \mathbf{u}=0,\, \Dv \mathbf{b}=0,
\end{aligned}
\right.
\end{align}
subject to the boundary and initial conditions
\begin{equation}\label{1.2MHD}
\begin{cases}
&\mathbf{u}|_{\partial\Omega}=0, \quad \mathbf{b}|_{\partial\Omega}=0, \quad\frac{\partial\mu}{\partial n}
|_{\partial\Omega}=0, \quad \frac{\partial\phi}{\partial n}
|_{\partial\Omega}=0, \quad \text{on } \partial\Omega \times (0, T), \\
&\rho(\cdot, 0)=\rho_0, \quad \mathbf{u}(\cdot, 0)=\mathbf{u}_0, \quad \mathbf{b}(\cdot, 0)=\mathbf{b}_0, \quad \phi(\cdot, 0)=\phi_0, \quad \text{in } \Omega,
\end{cases}
\end{equation}
where the surface tension $\operatorname{div}(\nabla\varphi\otimes\nabla\varphi)$ is often called the Korteweg force, and $\lambda$ is capillary coefficient, $\Dv \boldsymbol{u}_0 = \Dv \boldsymbol{b}_0 = 0$ and $n$ is the outward unit normal to the boundary $\partial\Omega$.

The system \eqref{1.1} is a nonlinear coupled system, which has extensive application prospects in the fields of nuclear fusion,
metallurgy, liquid metal magnetic pumps, aluminum electrolysis and so on \cite{HR,JCT,MSBKT}. We want to point out that the system \eqref{1.1} includes several important models as special cases. When $b=0$, it is the well-known nonhomogeneous incompressible Navier-Stokes-Cahn-Hilliard
 system, which has been studied by many researchers  (see, e.g., \cite{A, ADG, AMH, AAR,SGM}) and references therein.
When $\varphi$ is  absence,  the system \eqref{1.1} turns into the classical incompressible nonhomogeneous MHD equations \eqref{1.0MHD}. Due to its mathematical challenges and broad physical applications, there are extensive literature on the well-posedness of solutions for this system (see, e.g.,\cite{CCM3,CCM4,CW,wu3,wu4,xu2,XQF,XQF1}).
At present, the study on the   nonhomogeneous incompressible two-phase magnetohydrodynamic model  can be traced back to \cite{SZ,YMHYH,wu4}(homogeneous case, i.e., $\rho=$constant). However, to the best of our knowledge,  so far there is no
result regarding on the mathematical analysis of the  system \eqref{1.1}.

\ \ \ Throughout this work we will assume that the viscosity \(\nu = \nu(s) \in W^{1,\infty}(\mathbb{R})\) is such that \(0 < \nu_* \leq \nu(s) \leq \nu^*\) for all \(s \in \mathbb{R}\). For the potential \(\Psi(s)\), we will consider the following  Landau potential
\begin{equation}\label{wei1.4}
\Psi(s) = \frac{1}{4} (s^2 - 1)^2 \quad \forall s \in \mathbb{R}.
\end{equation}

Our first main result on the local well-posedness of strong solutions for the three-dimensional case then reads  as follows.
\begin{theorem}\label{1.2} When $d =3$. Let $\Omega$ be a bounded domain of class $C^3$ in $\mathbb{R}^3$.
	Suppose the initial data satisfy $\rho_0 \in L^{\infty}(\Omega)$,
	$\mathbf{u}_0, \mathbf{b}_0\in \mathbf{V}_{\sigma}(\Omega)$, and $\phi_0 \in H^2(\Omega)$ with
	$0<\rho_* \leq \rho_0 \leq \rho^*, \quad
	\partial_{\mathbf{n}} \phi_0 = 0 \ \text{on } \partial \Omega, \quad
	\mathbf{u}_0 = -\frac{\Delta \phi_0}{\rho_0} + \Psi'(\phi_0) \in H^1(\Omega).$
	Then there exists $T>0$, depending on the size of the initial data,
	the initial--boundary value problem
	\eqref{1.1}-\eqref{1.2MHD} the initial-boundary value problem
	\eqref{1.1}-\eqref{1.2MHD} with Landau potential \eqref{wei1.4} admits  the  unique  local strong solution
	$(\rho, \mathbf{u}, P, \phi, \mu)$ such that
	\begin{align*}
	0&<\rho_* \leq \rho(x, t) \leq \rho^*,\\
	\rho &\in \mathcal{C}([0,T]; L^r(\Omega)) \cap L^\infty(\Omega \times (0,T)) \cap L^\infty(0,T; H^{-1}(\Omega)), \,  r \in [2,\infty),\\
	\mathbf{u} &\in \mathcal{C}([0,T]; \mathbf{V}_{\sigma}) \cap L^2(0,T; H^2(\Omega)) \cap H^1(0,T; \mathbf{H}_{\sigma}), \\
\mathbf{b} &\in \mathcal{C}([0,T]; \mathbf{V}_{\sigma}) \cap L^2(0,T; H^2(\Omega)), \\
		P &\in L^2(0,T; H^1(\Omega)), \\
		\phi &\in \mathcal{C}([0,T]; (W^{2,6}(\Omega))_w) \cap H^1(0,T; H^1(\Omega)), \\
		\mu &\in L^\infty(0,T; H^1(\Omega)) \cap L^2(0,T; W^{2,6}(\Omega)).
	\end{align*}
\end{theorem}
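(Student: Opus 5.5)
We construct the solution by a semi-Galerkin scheme combined with a priori estimates that are uniform in the approximation parameter, and then prove uniqueness separately.

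\textbf{Approximation and transport.} Note that the compatibility condition in Theorem~\ref{1.2} should be read as $\mu_0=-\Delta\phi_0/\rho_0+\Psi'(\phi_0)\in H^1(\Omega)$. For fixed $m$ we take finite-dimensional Galerkin approximations $\mathbf{u}_m,\mathbf{b}_m$ in the span of the first $m$ Stokes eigenfunctions. Given $\mathbf{u}_m$, we solve the transport equation $\partial_t\rho+\mathbf{u}_m\cdot\nabla\rho=0$ exactly by characteristics. Since $\Dv\mathbf{u}_m=0$, this keeps $\rho_*\le\rho_m\le\rho^*$ and all $L^r$ norms of $\rho_m$. Given $\rho_m$ and $\mathbf{u}_m$, we solve the convective Cahn--Hilliard subsystem
\[
\rho\phi_t+\rho\mathbf{u}\cdot\nabla\phi=\Delta\mu,\qquad \rho\mu=-\Delta\phi+\rho\Psi'(\phi)
\]
with homogeneous Neumann conditions, using a fixed-point or Galerkin argument in $\phi$. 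The coupled finite-dimensional ODE for $(\mathbf{u}_m,\mathbf{b}_m)$ then has a local solution by Schauder's theorem.

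\textbf{Uniform estimates.} These are the core of the argument and proceed in three layers.
\begin{itemize}
\item[(i)] \emph{Basic energy.} We test the momentum equation with $\mathbf{u}$, the induction equation with $\mathbf{b}$, and the phase equation with $\mu$. The Korteweg term cancels against the term $\rho\mathbf{u}\cdot\nabla\phi\,\mu$, because
\[
-\Dv(\nabla\phi\otimes\nabla\phi)=-\Delta\phi\nabla\phi-\nabla\tfrac12|\nabla\phi|^2
\]
and $-\Delta\phi=\rho\mu-\rho\Psi'(\phi)$. This gives a bound on
\[
\tfrac12\!\int(\rho|\mathbf{u}|^2+|\mathbf{b}|^2+|\nabla\phi|^2)+\int\rho\Psi(\phi)
\]
together with the dissipation $\int(\nu|D\mathbf{u}|^2+|\nabla\mathbf{b}|^2+|\nabla\mu|^2)$. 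Here we use that $\int\rho\Psi(\phi)$ is transported by the continuity equation.
\item[(ii)] \emph{Higher-order estimates for the fluid.} We test the momentum equation with $\mathbf{u}_t$ and the induction equation with $\mathbf{b}_t$ and $-\Delta\mathbf{b}$. The $L^2(H^2)$ bound on $\mathbf{u}$ comes from Stokes regularity with variable viscosity $\nu(\phi)\in W^{1,\infty}$, since $\nabla\nu(\phi)=\nu'\nabla\phi$ and $\nabla\phi\in L^\infty$ once $\phi\in W^{2,6}$.
\item[(iii)] \emph{Higher-order estimates for the phase field.} We differentiate the Cahn--Hilliard system in time and test with $\mu_t$-type quantities; more precisely, we test $\rho\phi_t+\ldots$ against $\partial_t\mu$. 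Because $\rho$ is only $L^\infty$, $\mu_t$ is not directly available, so we work with $\rho\mu=-\Delta\phi+\rho\Psi'$ and use $\rho_t=-\mathbf{u}\cdot\nabla\rho$ in weak form. We then use elliptic regularity $\|\phi\|_{W^{2,6}}\lesssim\|\rho(\mu-\Psi'(\phi))\|_{L^6}+\|\phi\|$, and the $W^{2,6}$ bound on $\mu$ from $\Delta\mu=\rho(\phi_t+\mathbf{u}\cdot\nabla\phi)$.
\end{itemize}
Collecting terms, we expect a differential inequality of the form $Y'\le C(1+Y)^{k}$ for
\[
Y=\|\nabla\mathbf{u}\|^2+\|\nabla\mathbf{b}\|^2+\|\nabla\mu\|^2+\ldots .
\]
This gives a time $T$ depending on the data, and the bounds are uniform in $m$ on $[0,T]$.

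\textbf{Passage to the limit.} Aubin--Lions compactness applies to $\mathbf{u},\mathbf{b},\phi$. For $\rho$ we use DiPerna--Lions renormalization, which gives strong $\mathcal C([0,T];L^r)$ convergence of $\rho$. Together these let us pass to the limit in the nonlinear terms $\rho\mathbf{u}\otimes\mathbf{u}$, $\rho\Psi'(\phi)$, and $\nu(\phi)D\mathbf{u}$. The weak continuity $\phi\in\mathcal C([0,T];(W^{2,6})_w)$ follows from the $L^\infty(W^{2,6})$ bound combined with $\phi\in H^1(H^1)$.

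\textbf{Uniqueness, the expected main obstacle.} Since $\rho$ is merely bounded, a direct difference estimate for $\rho_1-\rho_2$ fails. We plan to follow the Lagrangian approach of Danchin--Mucha: rewrite both solutions in Lagrangian coordinates of their own flows. In these coordinates the density becomes $\rho_0$ and is time independent, and the difference satisfies a linear system with coefficients depending on $\nabla X$. The required Lipschitz control $\nabla\mathbf{u}\in L^1(0,T;L^\infty)$ comes from time-weighted estimates $\sqrt t\,\mathbf{u}_t\in L^\infty(L^2)$ and $\sqrt t\,\nabla\mathbf{u}_t\in L^2(L^2)$, together with the shift of integrability method, which gives $\mathbf{u}\in L^2(W^{2,p})$ in a weighted sense. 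A subtle point is the phase equation: the Lagrangian transform of $\Delta\mu$ and $\Delta\phi$ introduces commutators with $\nabla X-I$, which must be bounded in $H^{-1}$-type norms. We expect a Gronwall estimate on a small interval, combined with a continuation argument, to close the uniqueness argument.
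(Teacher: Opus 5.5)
Your existence argument follows essentially the paper's route. Both use the energy identity with the Korteweg cancellation, then test with $\partial_t\mathbf{u}$, $-\Delta\mathbf{b}$ and $\partial_t\mu$, using $\rho_t=-\Dv(\rho\mathbf{u})$ to move the time derivative off $\mu$. This gives a local inequality of the type $\frac{d}{dt}H+F\le C(1+H^3)$, followed by semi-Galerkin approximation and compactness. Your way of getting $L^2(0,T;H^2)$ for $\mathbf{u}$, via Stokes regularity with the Lipschitz viscosity $\nu(\phi)$, is sounder than the paper's. The paper tests with $-\Delta\mathbf{u}$ and never accounts for $(\nabla P,\Delta\mathbf{u})$, which is generally nonzero on a bounded domain. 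Your uniqueness part has the right ingredients: time weights, shift of integrability to reach $\nabla\mathbf{u}\in L^1(0,T;L^\infty)$, and Lagrangian coordinates in which $\bar\rho=\rho_0$. However, the way you propose to close it misses the central difficulty.

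In Lagrangian coordinates, $\delta\mathbf{u}=\bar{\mathbf{u}}^2-\bar{\mathbf{u}}^1$ is not divergence-free for either flow: $\Dv(A_1\delta\mathbf{u})=\Dv(\delta A\,\bar{\mathbf{u}}^2)\neq0$. Testing the momentum difference with $\delta\mathbf{u}$ therefore leaves $\int\nabla_{\mathbf{u}^1}\delta P\cdot\delta\mathbf{u}\,dy$, and nothing in your energy controls $\delta P$. So a Gronwall argument on \emph{a linear system with coefficients depending on $\nabla X$} does not close.

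The paper, following Danchin--Mucha, fixes this as follows.
\begin{itemize}
\item It splits $\delta\mathbf{u}=\varphi+\psi$, where $\varphi$ solves $\Dv(A_1\varphi)=\Dv(\delta A\,\bar{\mathbf{u}}^2)$ via Lemma~\ref{lemma:4.1}. This requires $\|\mathrm{Id}-A_1\|_{L^\infty}+\|\partial_tA_1\|_{L^2(L^6)}$ to be small, which is where $\int_0^T\|\nabla\mathbf{u}\|_\infty\,dt\le\frac12$ and $\nabla\mathbf{u}\in L^2(L^6)$ enter.
\item It combines $\sup_t\|t^{-1/2}\delta A\|_{L^2}\le C\|\nabla\delta\mathbf{u}\|_{L^2(0,T;L^2)}$ with the weighted bounds $t^{1/2}\nabla\bar{\mathbf{u}}^2\in L^2(L^\infty)$ and $t^{1/2}\bar{\mathbf{u}}^2_t\in L^{4/3}(L^6)$. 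This gives $\|\varphi\|_{L^4(L^2)}+\|\nabla\varphi\|_{L^2(L^2)}+\|\varphi_t\|_{L^{4/3}(L^{3/2})}\le c(T)\|\nabla\delta\mathbf{u}\|_{L^2(L^2)}$ with $c(T)\to0$.
\item Since $\psi$ is $A_1$-divergence-free, the pressure drops out of its energy estimate, done jointly with $\delta\mathbf{b},\delta\phi,\delta\mu$. Every source term is then bounded by $c(T)$ times the energy norms.
\item The conclusion is by absorption for small $T$, not by a pointwise Gronwall. Indeed, $\varphi$ is only controlled in time-integrated norms over the whole interval, and $\delta A(t)$ depends on $\int_0^t\nabla\delta\mathbf{u}$.
\end{itemize}
This also shows the second role of the time weights. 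They are needed not only for the Lipschitz bound but to produce the vanishing factor $c(T)$ in these products.
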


Our second main result on  the global well-posedness of strong solutions for the two-dimensional case is the following theorem.
\begin{theorem}\label{1.3} When $d=2$. Let $\Omega$ be a bounded domain of class $C^3$ in $\mathbb{R}^2$.
	Suppose the initial data satisfy $\rho_0 \in L^{\infty}(\Omega)$,
	$\mathbf{u}_0, \mathbf{b}_0\in \mathbf{V}_{\sigma}(\Omega)$, and $\phi_0 \in H^2(\Omega)$ with
	$0<\rho_* \leq \rho_0 \leq \rho^*, \quad
	\partial_{\mathbf{n}} \phi_0 = 0 \ \text{on } \partial \Omega, \quad
	\mathbf{u}_0 = -\frac{\Delta \phi_0}{\rho_0} + \Psi'(\phi_0) \in H^1(\Omega).$
	Then,  for any $T>0$, the initial-boundary value problem
	\eqref{1.1}-\eqref{1.2MHD} with Landau potential \eqref{wei1.4} admits the  unique global  strong solution
	$(\rho, \mathbf{u}, P, \phi, \mu)$ such that
	\begin{align*}
0&<\rho_* \leq \rho(x, t) \leq \rho^*,\\
\rho &\in \mathcal{C}([0,T]; L^r(\Omega)) \cap L^\infty(\Omega \times (0,T)) \cap L^\infty(0,T; H^{-1}(\Omega)),\, r \in [2,\infty), \\
		\mathbf{u} &\in \mathcal{C}([0,T]; \mathbf{V}_\sigma) \cap L^2(0,T; H^2(\Omega)) \cap H^1(0,T; \mathbf{H}_\sigma), \\
\mathbf{b} &\in \mathcal{C}([0,T]; \mathbf{V}_{\sigma}) \cap L^2(0,T; H^2(\Omega)) , \\		P &\in L^2(0,T; H^1(\Omega)), \\
		\phi &\in \mathcal{C}([0,T]; (W^{2,q}(\Omega))_w) \cap H^1(0,T; H^1(\Omega)), \\
		\mu &\in L^\infty(0,T; H^1(\Omega)) \cap L^2(0,T; W^{2,q}(\Omega)), q \in [2,\infty).
	\end{align*}
\end{theorem}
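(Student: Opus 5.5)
The proof of Theorem \ref{1.3} will follow the same scheme as the local result, Theorem \ref{1.2}. The new ingredient is that in two dimensions every a priori bound closes with constants depending only on $T$ and the initial data, so the local solution never blows up. The steps are as follows.

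\textbf{Step 1: basic energy law.} Testing the momentum equation by $\mathbf{u}$, the induction equation by $\mathbf{b}$, and the $\phi$-equation by $\mu$, and using $\rho\mu=-\Delta\phi+\rho\Psi'(\phi)$ together with the continuity equation, I expect
\begin{equation*}
\frac{d}{dt}\int_\Omega\Big(\tfrac12\rho|\mathbf{u}|^2+\tfrac12|\mathbf{b}|^2+\tfrac12|\nabla\phi|^2+\rho\Psi(\phi)\Big)dx+\int_\Omega\big(\nu(\phi)|D\mathbf{u}|^2+|\nabla\mathbf{b}|^2+|\nabla\mu|^2\big)dx=0 .
\end{equation*}
The Korteweg term cancels against $\rho\mathbf{u}\cdot\nabla\phi\,\mu$. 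The Lorentz and stretching terms cancel because $\mathrm{div}\,\mathbf{b}=0$ and $\mathbf{b}|_{\partial\Omega}=0$. The transport structure and $0<\rho_*\le\rho\le\rho^*$ (from the renormalized continuity equation) give the uniform bounds. The mean of $\mu$ is controlled by testing with $1$, which gives a global $L^2(0,T;H^1)$ bound on $\mu$.

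\textbf{Step 2: higher-order estimates.} This is the main obstacle. I would derive a differential inequality for
\begin{equation*}
\mathcal{H}(t)=\|\nabla\mathbf{u}\|^2+\|\nabla\mathbf{b}\|^2+\|\nabla\mu\|^2 .
\end{equation*}
The $\mathbf{u}$ part comes from testing with $\partial_t\mathbf{u}$. The $\nabla\mathbf{b}$ part comes from testing with $-\Delta\mathbf{b}$. For $\nabla\mu$, I would differentiate the $\phi$-equation in time, i.e. test with $\partial_t\phi$ and $\partial_t\mu$ as in the Navier--Stokes--Cahn--Hilliard literature, using the weight $\rho$ and the identity $\rho\partial_t\mu=-\Delta\partial_t\phi+\rho\Psi''(\phi)\partial_t\phi-\partial_t\rho\,(\mu-\Psi'(\phi))$.

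Since $\nu(\phi)$ is variable and only $\rho\in L^\infty$ is available, Stokes regularity gives
\begin{equation*}
\|\mathbf{u}\|_{H^2}\le C\big(\|\rho\partial_t\mathbf{u}\|+\|\rho\mathbf{u}\cdot\nabla\mathbf{u}\|+\|\mathbf{b}\cdot\nabla\mathbf{b}\|+\|\Delta\phi\nabla\phi\|\big)(1+\|\nabla\phi\|_{L^\infty}) .
\end{equation*}
Here $\nabla\phi$ is controlled through the elliptic problem $-\Delta\phi=\rho(\mu-\Psi'(\phi))$ with Neumann data, giving $\phi\in W^{2,q}$ for every $q<\infty$. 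The key 2D tools are Ladyzhenskaya's inequality $\|f\|_{L^4}^2\le C\|f\|\|\nabla f\|$ and Gagliardo--Nirenberg/Agmon-type bounds. With these, all nonlinear terms are bounded by $\epsilon$ times the dissipation plus $C\,\mathcal{H}^2$ times a coefficient lying in $L^1(0,T)$ by Step 1, for instance $\|\nabla\mathbf{u}\|^2+\|\nabla\mathbf{b}\|^2+\|\nabla\mu\|^2$. This yields
\begin{equation*}
\mathcal{H}'+\mathcal{D}\le C\,g(t)\,\mathcal{H}, \qquad g\in L^1(0,T).
\end{equation*}
In 3D one only gets $\mathcal{H}^3$, which is why Theorem \ref{1.2} is local. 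The delicate terms are the $\partial_t\rho$ contributions, since $\rho$ has no regularity. I would rewrite them via $\partial_t\rho=-\mathbf{u}\cdot\nabla\rho$ and integrate by parts onto smooth factors. The compatibility condition $\mu_0=-\Delta\phi_0/\rho_0+\Psi'(\phi_0)\in H^1$ provides $\mathcal{H}(0)<\infty$. Gronwall then gives the bound on $[0,T]$ for arbitrary $T$, and the regularity classes in the statement follow from elliptic estimates for $\mu$, $\phi$, $\mathbf{b}$ and the Stokes system, including $P\in L^2H^1$.

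\textbf{Step 3: existence.} I would construct approximate solutions, either by Galerkin with mollified $\rho_0$ bounded away from zero, or by continuation of the local solution of Theorem \ref{1.2}, whose proof adapts to $d=2$. The $T$-uniform bounds extend the solution to any $T$. Compactness (Aubin--Lions for $\mathbf{u},\mathbf{b},\phi$; DiPerna--Lions for $\rho$, giving $\rho\in C([0,T];L^r)$) lets us pass to the limit.

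\textbf{Step 4: uniqueness.} Because $\rho$ is merely bounded, a direct $L^2$ difference estimate for $\delta\rho$ fails, so I would use the Lagrangian approach. Since $\nabla\mathbf{u}\in L^1(0,T;L^\infty)$ follows from $L^2H^2$ plus time weights, the flow map $X$ is well defined. In Lagrangian coordinates the densities coincide and equal $\rho_0$. I would then estimate the differences of velocity, magnetic field and phase in negative or weighted norms, using $\sqrt t$-weighted estimates and shift of integrability, i.e. measuring $\delta\mathbf{u}$ in $L^2(0,T;H^1)$ and $\delta\phi$ in $L^\infty(0,T;H^1)$. A Gronwall argument on a short interval, iterated, then gives uniqueness.
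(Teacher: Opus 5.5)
Your overall scheme matches the paper's: energy law, a higher-order estimate from testing with $\partial_t\mathbf{u}$, $-\Delta\mathbf{b}$, $\partial_t\mu$ and $\partial_t\phi$, Galerkin, then Lagrangian uniqueness with time weights. The gap is in Step 2, which does not close as you describe it.

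\textbf{Step 2: the variable-viscosity term.} Because $\nu$ depends on $\phi$, testing the momentum equation with $\partial_t\mathbf{u}$ produces $\int_\Omega\nu'(\phi)\,\partial_t\phi\,|D\mathbf{u}|^2\,dx$ (the paper's $\mathrm{II}_4$). Your sketch never addresses this term, and it is exactly critical in two dimensions. The dissipation controls $\partial_t\phi$ only in $H^1$, and $H^1(\Omega)\not\subset L^\infty(\Omega)$.

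Any split $\|\partial_t\phi\|_{L^p}\|\nabla\mathbf{u}\|_{L^{2p/(p-1)}}^2$, followed by Ladyzhenskaya/Gagliardo--Nirenberg and Young, leaves $C_p\|\nabla\mathbf{u}\|^{4(p-1)/(p-2)}$. For $p=4$ this is $\|\nabla\mathbf{u}\|^6$. So you only get $\mathcal{H}'\le C g\,\mathcal{H}^{1+\varepsilon}$, not $Cg\,\mathcal{H}$, and that does not bound $\mathcal{H}$ on an arbitrary $[0,T]$ for large data.

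The missing ingredient is the logarithmic product estimate, Lemma~\ref{log}. It gives
$|\mathrm{II}_4|\le C\|\partial_t\phi\|_{H^1}\|\nabla\mathbf{u}\|^2\ln^{1/2}\big(C\|\Delta\mathbf{u}\|/\|\nabla\mathbf{u}\|\big)$.
Combined with $x\ln(Cy)\le y+x\ln(Cx)$, this leads to
$\frac{d}{dt}H+F\le C g(t)(e+H)\ln(e+H)$, where $g=\|\nabla\mathbf{u}\|^2+\|\nabla\mathbf{b}\|^2+\|\nabla\mu\|^2\in L^1(0,T)$.
The global bound then follows from a log-Gronwall (Osgood) argument on $\ln\ln(e+H)$, not from linear Gronwall. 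If $\nu$ were constant this term would vanish and your linear inequality would be plausible.

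You also need the Stokes bound in additive form. The term $\nu'(\phi)D\mathbf{u}\nabla\phi$ should be moved to the right-hand side and bounded by $\eta\|\Delta\mathbf{u}\|^2+C\|\nabla\mathbf{u}\|^2\|\phi\|_{H^2}^2$, as the paper does with $\tilde{\mathrm{II}}_5$. With your multiplicative factor $(1+\|\nabla\phi\|_{L^\infty})$, where $\|\nabla\phi\|_{L^\infty}\lesssim 1+\|\nabla\mu\|$, you cannot absorb $\varepsilon\|\mathbf{u}\|_{H^2}^2$ into $\rho_*\|\partial_t\mathbf{u}\|^2$ uniformly in $\mathcal{H}$.

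\textbf{Step 4: the pressure in Lagrangian coordinates.} Your sketch misses the device that makes the Lagrangian energy method work. After the change of variables, $\delta\mathbf{u}$ is not $\mathrm{div}_{\mathbf{u}^1}$-free: $\mathrm{div}_{\mathbf{u}^1}\delta\mathbf{u}=\mathrm{div}(\delta A\,\bar{\mathbf{u}}^2)$. Testing with $\delta\mathbf{u}$ therefore leaves $\int\delta P\,\mathrm{div}(\delta A\,\bar{\mathbf{u}}^2)$, and you have no estimate on $\delta P$.

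The paper writes $\delta\mathbf{u}=\varphi+\psi$, with $\varphi$ given by the twisted-divergence Lemma~\ref{lemma:4.1}. That lemma requires $\|\mathrm{Id}-A\|_{L^\infty}+\|A_t\|_{L^2(0,T;L^6)}$ to be small, hence $T$ small through $\int_0^T\|\nabla\mathbf{u}\|_{L^\infty}\le\frac12$. The pressure then drops out of the $\psi$-equation.

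Both $\varphi$ and the commutator terms are controlled through $\sup_t\|t^{-1/2}\delta A\|_{L^2}\le C\|\nabla\delta\mathbf{u}\|_{L^2(0,T;L^2)}$, paired with weighted bounds such as $t^{1/2}\nabla\bar{\mathbf{u}}^i\in L^2(0,T;L^\infty)$. The argument closes by smallness: $\|\nabla\delta\mathbf{u}\|_{L^2L^2}^2\le c(T)\|\nabla\delta\mathbf{u}\|_{L^2L^2}^2$ with $c(T)\to0$.

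Those weighted bounds, and even $\nabla\mathbf{u}\in L^1(0,T;L^\infty)$ (which does not follow from $L^2(0,T;H^2)$ in 2D), are exactly what the time-weighted estimates and the shift of integrability must deliver: $\sqrt{t}\,\nabla^2\mathbf{u}\in L^p(0,T;L^r)$ for some $r>2$. You name these tools but do not say what they have to produce or why the pressure is harmless.
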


Before getting into the details in the proof of our main result, we first outline the key components, challenges, and strategies involved. First,
we  need  deal with some  difficulties caused by the more complex nonlinear terms such as  $H\cdot \nabla H$, $u\cdot \nabla H$  and  $H\cdot \nabla u$,    and the hyperbolic-parabolic  coupling effect among the density,  the velocity field,  the magnetic field  and the phase field in the  system \eqref{1.1}. As for the uniqueness of solutions,  authors in \cite{AR1}  remained  an interesting open  issue for the inhomogeneous incompressible Navier-Stokes-Cahn-Hilliard system with   bounded  density. Here, it should be emphasized that, the boundedness of density field is an essential difficulty. In particular, it seems impossible to prove the uniqueness of the strong  solution in the Eulerian coordinates as in \cite{CHK,GT1,Lijinkai}. Indeed, let $(\rho_{1},\mathbf{u}_{1},P_{1},\phi_{1},\mu_{1})$ and $(\rho_{2},\mathbf{u}_{2},P_{2},\phi_{2},\mu_{2})$ be two different solutions of the  system \eqref{1.1}. Then
 $\delta \rho=\rho_1-\rho_2$ satisfies
 \begin{equation*}
\delta \rho_t + \mathbf{u}_1 \cdot \nabla \delta \rho = - (\mathbf{u}_1 - \mathbf{u}_2) \cdot \nabla \rho_2.
\end{equation*}
Without extra assumptions about the regularity of these solutions, the term $(\mathbf{u}_1 - \mathbf{u}_2) \cdot \nabla \rho_2$ cannot be handled by the energy method because the usual technique to prove uniqueness via Gronwall's inequality  cannot be applied here. And the uniqueness result of Germain \cite{PG} cannot be applied here either, which requires the density function satisfying
 $\nabla \rho \in L^{\infty}\big(0,T; L^{d}(\Omega)\big)$. Consequently, the uniqueness issue is non-trivial due to the roughness of the density and the hyperbolic nature of the continuity equation. To address this problem, we shall use the Lagrangian coordinates defined by the stream lines, which is motivated by \cite{RB1,RB,PZZ}. 
 According to the pioneering work by D. Hoff in \cite{DH} or to the recent papers \cite{RB1,RB,PZZ}, in most evolutionary fluid mechanics models, the condition  $\nabla \mathbf{u} \in L^{1}\big(0,T; L^{\infty}(\Omega)\big)$   seems to be the minimal requirement in order to get
uniqueness. However,  when the density is rough, propagate enough regularity for the velocity is the main difficulty. In order to  bound the quantity $\int_{0}^{T}\|\nabla \mathbf{u}\|_{L^{\infty}}d\tau$,  we first  exploit some  extra time-weighted energy estimates for the velocity field. Combining with these time-weighted estimates, interpolation results, classical Sobolev embedding, and shift of integrability from the time variable to the space variable, we eventually get the Lipschitz control of the velocity field.
\section{Preliminaries} This section reviews various tools, including some  functional settings,  important inequalities,  and useful lemmas that will be referenced throughout the paper.

Let $\Omega \subset \mathbb{R}^d$ ($d=2$ or $3$) be a bounded domain with smooth boundary $\partial \Omega$.
For $k \in \mathbb{N}$ and $1 \leq p \leq \infty$, we write $W^{k,p}(\Omega)$ for the Sobolev space of functions in $L^p(\Omega)$ with distributional derivatives up to order $k$ in $L^p(\Omega)$, with norm $\| \cdot \|_{W^{k,p}(\Omega)}$.
The Hilbert spaces $W^{k,2}(\Omega)$ are denoted by $H^k(\Omega)$ with norm $\| \cdot \|_{H^k(\Omega)}$.
We denote by $H_0^1(\Omega)$ the closure of $\mathcal{C}_0^\infty(\Omega)$ in $H^1(\Omega)$, and by $H^{-1}(\Omega)$ its dual.

We set $H=L^2(\Omega)$, with inner product $(\cdot,\cdot)$ and norm $\|\cdot\|$.
Let $V=H^1(\Omega)$ with norm $\|\cdot\|_V$ and dual space $V'$ with norm $\|\cdot\|_{V'}$.
The duality pairing between $V'$ and $V$ is denoted by $\langle \cdot , \cdot \rangle$.
For $u \in V'$, we denote by $\overline{u}$ the mean value of $u$ over $\Omega$, i.e., $\overline{u}=|\Omega|^{-1}\langle u,1 \rangle$.
By the generalized Poincare inequality (see \cite[Chapter II, Section 1.4]{R}), we have
\begin{equation}
u \mapsto \Big( \|\nabla u\|_{L^2(\Omega)}^2 + \big| \frac{1}{|\Omega|} \int_\Omega u dx \big|^2 \Big)^{\frac{1}{2}} \quad \text{and} \quad u \mapsto \Big( \|\nabla u\|_{L^2(\Omega)}^2 + \big| \int_\Omega \eta u dx \big|^2 \Big)^{\frac{1}{2}}, \label{eq:3.1}
\end{equation}
where \( \eta \in L^\infty(\Omega) \) is such that \( 0 < \eta_* \leq \eta(x) \leq \eta^* \) for almost every \( x \in \Omega \), are norms on \( H^1(\Omega) \) equivalent to \( \|u\|_{H^1(\Omega)} \). In particular, there exists a positive constant \( C = C(\Omega, \eta_*, \eta^*) \) such that
\begin{equation}
\|u\|_{H^1(\Omega)} \leq C \Big( \|\nabla u\|_{L^2(\Omega)}^2 + \big| \int_\Omega \eta u dx \big|^2 \Big)^{\frac{1}{2}}, \quad \forall u \in H^1(\Omega). \label{eq:3.2}
\end{equation}
We now introduce the solenoidal function spaces.
Let $\mathcal{C}_{0,\sigma}^\infty(\Omega)$ denote the set of smooth, compactly supported, divergence-free vector fields.
We define
\[
\H_\sigma = \overline{\mathcal{C}_{0,\sigma}^\infty(\Omega)}^{\mathbf{L}^2},
\qquad
\V_\sigma = \overline{\mathcal{C}_{0,\sigma}^\infty(\Omega)}^{\mathbf{H}_0^1},
\]
endowed with the inner product and norm inherited from $\H$ and $H_0^1(\Omega)$, respectively.
On $\V_\sigma$, we also use the equivalent inner product and norm
\[
(\uu,\vv)_{\V_\sigma}=(\nabla \uu,\nabla \vv),
\qquad
\|\uu\|_{\V_\sigma}=\|\nabla \uu\|.
\]
We denote by $\V_\sigma'$ its dual space.

We recall that  Korn's inequality entails
\begin{equation}
\| \nabla \mathbf{u} \|_{L^2(\Omega)} \leq \sqrt{2} \| D \mathbf{u} \|_{L^2(\Omega)} \leq \sqrt{2} \| \nabla \mathbf{u} \|_{L^2(\Omega)}, \quad \forall \mathbf{u} \in \mathbf{V}_\sigma. \label{eq:3.8}
\end{equation}

We  recall the following Gagliardo-Nirenberg and Agmon inequalities.
\begin{lemma}\label{chazhi}\cite{Temam}
\begin{align}
\|u\|_{L^4(\Omega)} &\leq C \|u\|_{L^2(\Omega)}^{\frac{1}{2}} \|u\|_{H^1(\Omega)}^{\frac{1}{2}}, \quad \forall u \in H^1(\Omega), \quad \text{if } d = 2, \label{eq:3.3} \\
\|u\|_{L^3(\Omega)} &\leq C \|u\|_{L^2(\Omega)}^{\frac{1}{2}} \|u\|_{H^1(\Omega)}^{\frac{1}{2}}, \quad \forall u \in H^1(\Omega), \quad \text{if } d = 3, \label{eq:3.4} \\
\|u\|_{L^\infty(\Omega)} &\leq C \|u\|_{L^2(\Omega)}^{\frac{1}{2}} \|u\|_{H^2(\Omega)}^{\frac{1}{2}}, \quad \forall u \in H^2(\Omega), \quad \text{if } d = 2, \label{eq:3.5} \\
\|u\|_{L^\infty(\Omega)} &\leq C \|u\|_{H^1(\Omega)}^{\frac{1}{2}} \|u\|_{H^2(\Omega)}^{\frac{1}{2}}, \quad \forall u \in H^2(\Omega), \quad \text{if } d = 3. \label{eq:3.6}
\end{align}
\end{lemma}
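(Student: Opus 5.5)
The four inequalities in Lemma \ref{chazhi} are standard Gagliardo--Nirenberg/Agmon interpolation inequalities. I would prove them by reducing to the whole space through a bounded extension operator and then applying scaling-type interpolation. Since $\Omega$ is bounded with smooth (in particular Lipschitz) boundary, there is a Stein extension operator $E$ that is simultaneously bounded $L^2(\Omega)\to L^2(\mathbb{R}^d)$, $H^1(\Omega)\to H^1(\mathbb{R}^d)$ and $H^2(\Omega)\to H^2(\mathbb{R}^d)$. It therefore suffices to prove each inequality for $w=Eu\in H^k(\mathbb{R}^d)$ with full norms on the right-hand side. Restricting back to $\Omega$ then gives the stated bounds, because $\|w\|_{L^p(\mathbb{R}^d)}\ge\|u\|_{L^p(\Omega)}$ and $\|w\|_{H^k(\mathbb{R}^d)}\le C\|u\|_{H^k(\Omega)}$.

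For \eqref{eq:3.3} ($d=2$) I would use Ladyzhenskaya's argument for $w\in C_c^\infty(\mathbb{R}^2)$ and conclude by density.
\begin{itemize}
\item Write $w^2(x_1,x_2)\le 2\int_{\mathbb{R}}|w\,\partial_1 w|\,dx_1$, and similarly in $x_2$.
\item Multiply the two bounds and integrate over $\mathbb{R}^2$ (Fubini). This gives $\|w\|_{L^4}^4\le 4\|w\,\partial_1w\|_{L^1}\|w\,\partial_2w\|_{L^1}\le 4\|w\|_{L^2}^2\|\nabla w\|_{L^2}^2$.
\end{itemize}
For \eqref{eq:3.4} ($d=3$) I would interpolate $L^3$ between $L^2$ and $L^6$ by H\"older: $\|w\|_{L^3}\le\|w\|_{L^2}^{1/2}\|w\|_{L^6}^{1/2}$, since $\frac13=\frac{1/2}{2}+\frac{1/2}{6}$. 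The Sobolev embedding $H^1(\mathbb{R}^3)\hookrightarrow L^6$ then finishes it.

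For the Agmon inequalities I would work on the Fourier side. The starting point is $\|w\|_{L^\infty}\le(2\pi)^{-d}\|\hat w\|_{L^1}$. Split the integral at radius $R$ and use Cauchy--Schwarz on each piece:
\[
\int_{|\xi|\le R}|\hat w|\,d\xi\le CR^{d/2-s}\Big(\int|\xi|^{2s}|\hat w|^2\Big)^{1/2},\qquad \int_{|\xi|>R}|\hat w|\,d\xi\le CR^{d/2-2}\|w\|_{\dot H^2}.
\]
Here the first bound requires $s<d/2$, and the second requires $d<4$, which holds for $d=2,3$.
\begin{itemize}
\item For $d=2$ take $s=0$. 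The bound becomes $CR\|w\|_{L^2}+CR^{-1}\|w\|_{H^2}$, and the choice $R^2=\|w\|_{H^2}/\|w\|_{L^2}$ gives \eqref{eq:3.5}.
\item For $d=3$ take $s=1$. The bound becomes $CR^{1/2}\|w\|_{H^1}+CR^{-1/2}\|w\|_{H^2}$, and the choice $R=\|w\|_{H^2}/\|w\|_{H^1}$ gives \eqref{eq:3.6}.
\end{itemize}
In both cases the case $w=0$ is trivial. The $s=1$ bound for $d=3$ is admissible because $\int_{|\xi|\le R}|\xi|^{-2}\,d\xi=CR$ is finite.

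The only step that needs care is the transfer back from the whole space to $\Omega$. One must check that the same extension operator is bounded on all the relevant Sobolev levels at once, so that the interpolation exponents are preserved. Stein's universal extension for Lipschitz domains provides exactly this. Alternatively, one could simply cite \cite{Temam}, where these inequalities are proved in this form.
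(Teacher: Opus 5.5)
Your proof is correct. The paper, however, gives no proof of this lemma: it states it as a standard fact with a citation to \cite{Temam}, so you are supplying a self-contained replacement for that citation. Your route makes explicit two things the citation hides.

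First, a single extension operator bounded on $L^2$, $H^1$ and $H^2$ simultaneously is exactly what lets the interpolation exponents survive the passage back from $\mathbb{R}^d$ to $\Omega$. It also shows that Lipschitz regularity of $\partial\Omega$ suffices, which is much less than the $C^3$ boundary assumed in Theorems \ref{1.2} and \ref{1.3}. On such a domain the classical higher-order reflection extension would do equally well, since it too is bounded on all three levels at once.

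Second, your Fourier splitting with a free parameter $s$ gives both Agmon inequalities from one computation. It also shows where the restriction $d<4$ enters, namely through the tail integral $\int_{|\xi|>R}|\xi|^{-4}\,d\xi$.

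You could economize slightly. \eqref{eq:3.4} follows directly on $\Omega$ from H\"older and $H^1(\Omega)\hookrightarrow L^6(\Omega)$, and \eqref{eq:3.6} only needs $E$ bounded on $H^1$ and $H^2$. Only \eqref{eq:3.3} and \eqref{eq:3.5}, which have the bare $L^2$ norm on the right-hand side, genuinely use the $L^2$-boundedness of $E$.

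In short, the citation buys brevity. Your version buys a complete proof in which the constants depend on $\Omega$ only through the operator norms of $E$.
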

We here  present  the following logarithmic estimate of the product, which plays a key role in the proof of the global existence for two dimensional case.
\begin{lemma}\label{log}\cite{AAR} When $d=2$, $\forall u, v \in H^1(\Omega),$ then
\begin{equation}\label{eq:3.7}
\|uv\|_{L^2(\Omega)} \leq C \|u\|_{H^1(\Omega)} \|v\|_{L^2(\Omega)} \ln^{\frac{1}{2}} \Big( e \frac{\|v\|_{H^1(\Omega)}}{\|v\|_{L^2(\Omega)}} \Big).
\end{equation}
\end{lemma}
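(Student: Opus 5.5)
The inequality \eqref{eq:3.7} is scale-invariant in $v$, so I would normalize $\|v\|_{L^2(\Omega)}=1$ and set $\lambda=\|v\|_{H^1(\Omega)}\ge 1$. It then suffices to prove
\begin{equation*}
\|uv\|_{L^2}\le C\|u\|_{H^1}\ln^{1/2}(e\lambda).
\end{equation*}
The argument is a Brezis--Gallouet/Trudinger-type interpolation. It uses the fact that in two dimensions $H^1$ just fails to embed into $L^\infty$, while the embedding constants into $L^p$ grow only like $\sqrt{p}$.

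The first step is the sharp-constant Sobolev embedding on the bounded $C^1$ (here $C^3$) domain $\Omega\subset\mathbb{R}^2$:
\begin{equation*}
\|u\|_{L^p(\Omega)}\le C\sqrt{p}\,\|u\|_{H^1(\Omega)},\qquad 2\le p<\infty,
\end{equation*}
with $C$ independent of $p$. I would obtain this by extending $u$ to $\mathbb{R}^2$ with an extension operator bounded on $H^1$. One can then use either the Gagliardo--Nirenberg bound $\|w\|_{L^p}\le C\sqrt p\|w\|_{L^2}^{2/p}\|\nabla w\|_{L^2}^{1-2/p}$, which follows by iterating the Ladyzhenskaya inequality \eqref{eq:3.3}, or the Moser--Trudinger inequality $\int e^{c|w|^2/\|w\|_{H^1}^2}\le C$. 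The latter expands into exactly this $\sqrt p$ growth via the power series of the exponential.

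Next, by H\"older, for $2<p<\infty$ and $\frac1q=\frac12-\frac1p$,
\begin{equation*}
\|uv\|_{L^2}\le\|u\|_{L^p}\|v\|_{L^q}.
\end{equation*}
Interpolating $v$ between $L^2$ and $L^r$ for large $r$ is wasteful. Instead I would apply the two-dimensional Gagliardo--Nirenberg inequality $\|v\|_{L^q}\le C\|v\|_{L^2}^{2/q}\|v\|_{H^1}^{1-2/q}$ with $1-\frac2q=\frac2p$. Here $q\to 2$ as $p\to\infty$, so the constant stays bounded. This gives
\begin{equation*}
\|uv\|_{L^2}\le C\sqrt p\,\|u\|_{H^1}\,\lambda^{2/p}.
\end{equation*}
Choosing $p=\max\{4,\,2\ln(e\lambda)\}$ makes $\lambda^{2/p}\le e$ and $\sqrt p\le C\ln^{1/2}(e\lambda)$, which is the desired estimate.

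The main obstacle is the uniformity of the constants in $p$ and $q$. The $\sqrt p$ growth must be genuinely sharp on a bounded domain, and I would use the extension plus Moser--Trudinger argument to secure it. The Gagliardo--Nirenberg constant for $q$ near 2 must stay bounded; this follows by interpolating between $L^2$ and $L^4$ using \eqref{eq:3.3}, since $q\in(2,4]$ once $p\ge4$.
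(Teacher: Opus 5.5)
The paper does not prove this lemma; it quotes it from \cite{AAR}, so there is no internal proof to compare with. Your argument is the standard one for this estimate, and it is correct.

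The following steps all check out:
\begin{itemize}
\item the scaling reduction to $\|v\|_{L^2}=1$;
\item the H\"older split $\|uv\|_{L^2}\le\|u\|_{L^p}\|v\|_{L^q}$ with $\frac1p+\frac1q=\frac12$;
\item the bound $\|v\|_{L^q}\le C\|v\|_{L^2}^{1-2/p}\|v\|_{H^1}^{2/p}$, obtained by interpolating between $L^2$ and \eqref{eq:3.3}, with $C$ uniform for $q\in(2,4]$;
\item the choice $p=\max\{4,2\ln(e\lambda)\}$, which gives $\lambda^{2/p}\le e$ and $\sqrt p\le 2\ln^{1/2}(e\lambda)$.
\end{itemize}

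One correction to your first step: the $\sqrt p$ growth in $\|u\|_{L^p}\le C\sqrt p\,\|u\|_{H^1}$ does not follow from iterating Ladyzhenskaya's inequality. Applying \eqref{eq:3.3} to $|u|^s$ needs $|u|^{s-1}\nabla u\in L^2$, which is not controlled for a general $H^1$ function. The iteration that does close applies $\|f\|_{L^2}\le C\|\nabla f\|_{L^1}$ to $f=|u|^s$. It only gives $\|u\|_{L^p}\le Cp\,\|u\|_{H^1}$, and that would put $\ln$ instead of $\ln^{1/2}$ in the final bound.

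So you genuinely need the route you list as your alternative: extend $u$ and cut it off into $H^1_0$ of a ball, then apply Moser--Trudinger. An equivalent route is Hausdorff--Young on $\mathbb{R}^2$:
\[
\|f\|_{L^p}\le C\|\hat f\|_{L^{p'}}\le C\,\big\|(1+|\xi|^2)^{-1/2}\big\|_{L^{2p/(p-2)}}\,\|f\|_{H^1},
\]
where the middle factor equals $\big(\pi(p-2)/2\big)^{(p-2)/(2p)}$, which is bounded by $C\sqrt p$.
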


We also state a useful lemma, which plays a key role in the proof of uniqueness.
\begin{lemma}[\cite{DM1,RB}]\label{lemma:4.1}
	Let \( A \) be a matrix-valued function on \( [0,T] \times \Omega \) satisfying
	\begin{align*}
	\det A \equiv 1.
	\end{align*}
	There exists a constant \( c>0 \), depending only on \( d \), such that if
	\begin{align*}
	\| \mathrm{Id} - A \|_{L^\infty(0,T; L^\infty)} + \| A_t \|_{L^2(0,T; L^6)} \leq c,
	\end{align*}
	then for any function \( R : [0,T] \times \Omega \to \mathbb{R}^d \) such that
	\begin{align*}
	\operatorname{div} R \in L^2(0,T \times \Omega), \quad R \in L^4(0,T; L^2), \quad R_t \in L^{4/3}(0,T; L^{3/2}), \quad R \cdot \mathbf{n} \equiv 0 \ \text{on} \ (0,T)\times \partial \Omega,
	\end{align*}
	the equation
	\begin{align*}
	\operatorname{div}(Av) = \operatorname{div} R =: g \quad \text{in} \quad [0,T] \times \Omega
	\end{align*}
	admits a solution \( v \) in the space
	\begin{align*}
	X_T := \left\{ v \in L^2(0,T; H_0^1(\Omega)), \, v \in L^4(0,T; L^2(\Omega)), \, v_t \in L^{4/3}(0,T; L^{3/2}(\Omega)) \right\},
	\end{align*}
	which satisfies the following estimates for some constant \( C=C(d) \):
	\begin{equation}\label{4.9}
		\begin{split}
			&\| v \|_{L^4(0,T; L^2)} \leq C \| R \|_{L^4(0,T; L^2)},
			\quad \| \nabla v \|_{L^2(0,T; L^2)} \leq C \| g \|_{L^2(0,T; L^2)},  \\[6pt]
			& \| v_t \|_{L^{4/3}(0,T; L^{3/2})} \leq C \| R \|_{L^4(0,T; L^2)} + C \| R_t \|_{L^{4/3}(0,T; L^{3/2})}.
		\end{split}
	\end{equation}
\end{lemma}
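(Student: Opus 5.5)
The plan is to reduce the problem to the case $g\equiv 0$ boundary-wise and to treat $\operatorname{div}(Av)=g$ as a perturbation of the classical divergence equation $\operatorname{div} w = g$, which is solved by the Bogovskii operator $\mathcal{B}$. Since $\det A\equiv 1$, the Piola identity $\operatorname{div}(\operatorname{adj}(A)^{T}\,\cdot)$-type structure is not really needed. Instead I write
\[
\operatorname{div} v = g + \operatorname{div}\big((\mathrm{Id}-A)v\big),
\]
and look for $v$ as a fixed point of the map
\[
\Phi(v) := \mathcal{B}\big(g + \operatorname{div}((\mathrm{Id}-A)v)\big) = \mathcal{B}\operatorname{div}\big(R + (\mathrm{Id}-A)v\big).
\]
Here I use that $\mathcal{B}$ maps mean-zero $L^p$ functions to $W^{1,p}_0$, and that $\mathcal{B}\operatorname{div}$ extends boundedly on $L^p$ for fields with vanishing normal trace ($1<p<\infty$). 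The compatibility condition $\int_\Omega g\,dx=0$ follows from $R\cdot\mathbf n=0$, and likewise $\int \operatorname{div}((\mathrm{Id}-A)v)=0$ since $v\in H^1_0$.

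The key steps, in order, are as follows.

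\textbf{Step 1: estimates in each norm of $X_T$.} Because $\mathcal{B}$ and $\mathcal{B}\operatorname{div}$ are time-independent linear operators, they act pointwise in $t$ and commute with $\partial_t$. I obtain:
\begin{enumerate}
\item $\|\nabla\Phi(v)\|_{L^2L^2}\le C\|g\|_{L^2L^2}+C\|\nabla((\mathrm{Id}-A)v)\|_{L^2L^2}$.
\item $\|\Phi(v)\|_{L^4L^2}\le C\|R\|_{L^4L^2}+C\|\mathrm{Id}-A\|_{L^\infty L^\infty}\|v\|_{L^4L^2}$.
\item $\partial_t\Phi(v)=\mathcal{B}\operatorname{div}\big(R_t+(\mathrm{Id}-A)v_t - A_t v\big)$, whose $L^{4/3}L^{3/2}$ norm is bounded by $\|R_t\|+c\|v_t\|+\|A_t\|_{L^2L^6}\|v\|_{L^4L^2}$, using Hölder with $\tfrac{1}{6}+\tfrac12=\tfrac23$ in space and $\tfrac12+\tfrac14=\tfrac34$ in time.
\end{enumerate}

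\textbf{Step 2: the gradient term, which is the main obstacle.} Expanding $\nabla((\mathrm{Id}-A)v)$ produces $\nabla A\, v$, and no bound on $\nabla A$ is assumed. I will avoid this term by estimating $\operatorname{div}((\mathrm{Id}-A)v)$ directly, rather than going through the gradient. Using $\det A=1$, and writing $A$ in the Lagrangian form $A=(D X)^{-1}$ in applications, the Piola identity gives $\operatorname{div}(\operatorname{adj}\cdot)=0$. My first attempt is to set up the fixed point instead as
\[
\operatorname{div} v = g + (\mathrm{Id}-A)^T:\nabla v,
\]
using the Piola-type identity $\operatorname{div}(Av)=A^T:\nabla v$, which holds when the columns of $A^T$ are divergence free. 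This identity is exactly what $\det A=1$ provides in the cited framework. The right-hand side is then bounded in $L^2L^2$ by $\|g\|+c\|\nabla v\|$, with no derivative falling on $A$.

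\textbf{Step 3: the time-derivative norm.} For the $L^{4/3}L^{3/2}$ estimate I keep the divergence form $\operatorname{div}(R+(\mathrm{Id}-A)v)$, which is legitimate since both expressions agree. I then bound its time derivative through $\mathcal{B}\operatorname{div}$ on $L^{3/2}$, and handle the $L^4L^2$ norm the same way.

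\textbf{Step 4: contraction.} Combining the estimates, $\Phi$ maps $X_T$ to itself, and for two inputs the difference satisfies
\[
\|\Phi(v_1)-\Phi(v_2)\|_{X_T}\le C c\,\|v_1-v_2\|_{X_T}.
\]
Choosing $c$ so that $Cc\le\tfrac12$, with $C$ depending only on the Bogovskii constants, Banach's fixed point theorem yields $v$. Finally, absorbing the $c$-terms into the left-hand side gives \eqref{4.9}.
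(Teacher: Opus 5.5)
Your scheme is the standard perturbative argument behind the cited Danchin--Mucha lemma; the paper itself gives no proof. You rewrite the equation as $\operatorname{div} v=\operatorname{div}\big(R+(\mathrm{Id}-A)v\big)$, invert $\operatorname{div}$ with the Bogovskii operator $\mathcal{B}$, use the divergence form for the $L^4(0,T;L^2)$ and $L^{4/3}(0,T;L^{3/2})$ bounds and a non-divergence form for $\nabla v$, and close by contraction for small $c$. Your Steps 1(2), 1(3), 3 and 4 are fine.

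\textbf{The flaw is the justification of Step 2.} The identity $\operatorname{div}(Av)=A^T:\nabla v$, i.e.\ $\operatorname{div}\big((\mathrm{Id}-A)v\big)=(\mathrm{Id}-A)^T:\nabla v$, holds only when the columns of $A$ are divergence free. That is the Piola identity for $A=(\nabla_y X)^{-1}$ with $\det\nabla_y X\equiv 1$, and it does \emph{not} follow from $\det A\equiv 1$. So your opening remark that this structure is ``not really needed'' is wrong, and so is your later claim that $\det A\equiv1$ ``provides'' it. Step 1(1) also has to go.

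This is not a removable technicality: with only $\det A\equiv 1$ the statement is false. In $d=2$ take $A=\mathrm{diag}(\lambda,\lambda^{-1})$, independent of $t$, with $\lambda=1$ on $\Omega_-=\Omega\cap\{x_1<0\}$ and $\lambda=1+\varepsilon$ on $\Omega_+=\Omega\cap\{x_1>0\}$. Then $\det A\equiv1$, $\|\mathrm{Id}-A\|_{L^\infty}\le\varepsilon$ and $A_t=0$. However, for $v\in H^1_0(\Omega)$ the distribution $\operatorname{div}(Av)$ equals $\lambda\partial_1 v_1+\lambda^{-1}\partial_2 v_2$ plus $\varepsilon\, v_1|_\Sigma$ times surface measure on $\Sigma=\Omega\cap\{x_1=0\}$. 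Hence $\operatorname{div}(Av)=g\in L^2$ forces $v_1=0$ on $\Sigma$, and integrating over $\Omega_-$ gives $\int_{\Omega_-}g\,dx=0$. So any admissible $g=\operatorname{div}R$ with $R=\nabla\psi$, $\partial_{\mathbf n}\psi=0$ and $\int_{\Omega_-}g\,dx\neq0$ has no solution in $X_T$.

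\textbf{What your argument does prove.} It proves the lemma under the additional hypothesis that $A=(\nabla_y X)^{-1}$ for a Lipschitz measure-preserving map $X$, equivalently $\operatorname{div}(Aw)=A^T:\nabla w$ for all $w\in H^1_0$. This is exactly the situation in the uniqueness proof, where $A=A_1$ comes from the flow of the divergence-free field $\mathbf{u}^1$. You should state this hypothesis explicitly rather than attribute it to $\det A\equiv1$.

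With that hypothesis, the rest is as you describe. First, $g+(\mathrm{Id}-A)^T:\nabla v$ has zero mean because $R\cdot\mathbf n=0$ and $v|_{\partial\Omega}=0$. Second, $\partial_t$ commutes with $\mathcal{B}\operatorname{div}$ once $\mathcal{B}\operatorname{div}f$ is understood via the functional $\theta\mapsto-\int_\Omega f\cdot\nabla\theta$ on $W^{1,p'}(\Omega)$; this avoids needing a boundary trace of $v_t$.

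Finally, in a bounded domain $c$ and $C$ inherit the Bogovskii constants, so they depend on $\Omega$, not only on $d$.
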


\section{Existence of strong solutions}
In this section,  our central task is to prove  the existence of strong solutions to the initial-boundary value problem
	\eqref{1.1}-\eqref{1.2MHD} with Landau potential \eqref{wei1.4}.

\subsection{The three  dimensional case}
Here,  we shall prove the existence of local in time  strong solution  to the initial-boundary value problem
	\eqref{1.1}-\eqref{1.2MHD} with   arbitrary
large data. The proof is divided in several steps.
\subsubsection*{Step 1 The total energy balance.}
Multiplying \eqref{1.1}$_{2}$ by \(\mathbf{u}\) and integrating over \(\Omega\), we have
\begin{equation}
\int_{\Omega} \rho\partial_t \Big(\frac{\vert\mathbf{u}\vert^2}{2}\Big)\,\mathrm{d}x + \int_{\Omega} \rho\mathbf{u}\cdot\nabla \Big(\frac{\vert\mathbf{u}\vert^2}{2}\Big)\,\mathrm{d}x + \int_{\Omega} \nu(\phi)\vert D\mathbf{u}\vert^2\,\mathrm{d}x = \int \big((\mathbf{b}\cdot\nabla) \mathbf{b}\big)\cdot \mathbf{u}-\int_{\Omega} \text{div}(\nabla\phi\otimes\nabla\phi)\cdot\mathbf{u}\,\mathrm{d}x.
\end{equation}
Noting that
\begin{equation}
\int_{\Omega} \rho\partial_t \Big(\frac{1}{2}\vert\mathbf{u}\vert^2\Big)\,\mathrm{d}x + \int_{\Omega} \rho\mathbf{u}\cdot\nabla \Big(\frac{1}{2}\vert\mathbf{u}\vert^2\Big)\,\mathrm{d}x= \frac{\mathrm{d}}{\mathrm{d}t}\int_{\Omega} \frac{1}{2}\rho\vert\mathbf{u}\vert^2\,\mathrm{d}x - \int_{\Omega} (\partial_t\rho + \text{div}(\rho\mathbf{u}))\frac{\vert\mathbf{u}\vert^2}{2}\,\mathrm{d}x,
\end{equation}
and  using the integration by parts, we deduce
\begin{align*}
\frac{1}{2} \frac{d}{dt} \int_{\Omega} \rho |\mathbf{u}|^2 \, dx + \int_{\Omega} \nu(\phi) |D \mathbf{u}|^2 \, dx = \int_{\Omega} - \operatorname{div}(\nabla \phi \otimes \nabla \phi) \cdot\mathbf{u} \, dx.
\end{align*}
Moreover, in light of the  following relations
\begin{align*}
- \operatorname{div}(\nabla \phi \otimes \nabla \phi) &= - \Delta \phi \nabla \phi - \nabla \big( \frac{1}{2} |\nabla \phi|^2 \big) \\
&= \rho \mu \nabla \phi - \rho \Psi'(\phi) \nabla \phi - \nabla \left( \frac{1}{2} |\nabla \phi|^2 \right) \\
&= \rho \mu \nabla \phi - \rho \nabla \Psi(\phi) - \nabla \left( \frac{1}{2} |\nabla \phi|^2 \right),
\end{align*}
we conclude  that
\begin{equation}\label{3.3}
\frac{1}{2} \frac{\mathrm{d}}{\mathrm{d}t} \int_{\Omega} \rho |\mathbf{u}|^2 \, dx + \int_{\Omega} \nu(\phi) | \mathbf{u}|^2 \, dx = \int\big(( \mathbf{b}\cdot\nabla)  \mathbf{b}\big)\cdot \mathbf{u}+\int_{\Omega} \rho \mu \nabla \phi \cdot\mathbf{u} \, dx - \int_{\Omega} \rho \mathbf{u} \cdot \nabla \Psi(\phi) \, dx.
\end{equation}
Next, multiplying \eqref{1.1}$_{4}$ by \(\mu\) and integrating over \(\Omega\), we find that
\begin{align*}
\int_{\Omega} \rho \partial_t \phi \mu \, dx + \int_{\Omega} \rho  \mathbf{u} \cdot \nabla \phi \mu \, dx + \int_{\Omega} |\nabla \mu|^2 \, dx = 0.
\end{align*}
Employing \eqref{1.1}$_{4}$ and \eqref{1.1}$_{5}$, we have
\begin{align*}
\int_{\Omega} \rho \partial_t \phi \mu \, dx &= - \int_{\Omega} \partial_t \phi \Delta \phi \, dx + \int_{\Omega} \rho \Psi'(\phi) \partial_t \phi \, dx \\
&= \frac{1}{2} \frac{d}{dt} \int_{\Omega} |\nabla \phi|^2 \, dx + \int_{\Omega} \rho \partial_t \Psi(\phi) \, dx \\
&= \frac{d}{dt} \int_{\Omega} \left( \frac{1}{2} |\nabla \phi|^2 + \rho \Psi(\phi) \right) dx - \int_{\Omega} \partial_t \rho \Psi(\phi) \, dx.
\end{align*}
Then, we get
\begin{equation}\label{3.4}
\frac{d}{dt} \int_{\Omega} \left( \frac{1}{2} |\nabla \phi|^2 + \rho \Psi(\phi) \right) dx + \int_{\Omega} \rho \mathbf{u} \cdot \nabla \phi \mu \, dx + \int_{\Omega} |\nabla \mu|^2 \, dx = \int_{\Omega} \partial_t \rho \Psi(\phi) \, dx.
\end{equation}
Multiplying  \eqref{1.1}$_{3}$ by \(\mathbf{b}\) and integrating over \(\Omega\), we have
\begin{equation}\label{3.5}
\int_{\Omega} \partial_t \left(\frac{\vert\mathbf{b}\vert^2}{2}\right)\,\mathrm{d}x+\int_{\Omega} \big((\mathbf{u}\cdot\nabla) \mathbf{b}\big)\cdot \mathbf{b}dx-\int_{\Omega}\big((\mathbf{b}\cdot\nabla) \mathbf{u}\big)\cdot \mathbf{b}dx+\int_{\Omega}|\nabla \mathbf{b}|^2 dx=0.
\end{equation}
By using $\text{div}\mathbf{u}=0$ and
\begin{equation*}
\int_{\Omega}\big((\mathbf{b}\cdot\nabla) \mathbf{u}\big)\cdot \mathbf{b}dx=-\int_{\Omega}\big((\mathbf{b}\cdot\nabla) \mathbf{b}\big)\cdot \mathbf{u}dx,\notag
\end{equation*}
and then  summing \eqref{3.3}-\eqref{3.5}, we arrive at
\begin{align*}
\frac{\mathrm{d}}{\mathrm{d}t} \int_{\Omega}\big( \frac{1}{2} &\rho |\mathbf{u}|^2 + \frac{1}{2} |\mathbf{b}|^2 + \frac{1}{2} |\nabla \phi|^2 + \rho \Psi(\phi)\big) \, \mathrm{d}x + \int_{\Omega} \nu(\phi) |D \mathbf{u}|^2\\
&+|\nabla\mathbf{b}|^2 \, \mathrm{d}x + \int_{\Omega} |\nabla \mu|^2 \, \mathrm{d}x = \int_{\Omega} \partial_t \rho \Psi(\phi) \, \mathrm{d}x - \int_{\Omega} \rho \mathbf{u} \cdot \nabla \Psi(\phi) \, \mathrm{d}x.
\end{align*}
After integrating by parts, and owing to  \eqref{1.1}$_{1}$, we have
\begin{equation*}
\int_{\Omega} \partial_t \rho \Psi(\phi) \, \mathrm{d}x - \int_{\Omega} \rho \mathbf{u} \cdot \nabla \Psi(\phi) \, \mathrm{d}x = \int_{\Omega} (\partial_t \rho + \mathrm{div}(\rho \mathbf{u})) \Psi(\phi) \, \mathrm{d}x = 0.
\end{equation*}
Therefore, we infer that
\begin{equation}\label{3.6A}
\frac{\mathrm{d}}{\mathrm{d}t} \int_{\Omega}\big( \frac{1}{2} \rho |\mathbf{u}|^2 + \frac{1}{2} |\mathbf{b}|^2 + \frac{1}{2} |\nabla \phi|^2 + \rho \Psi(\phi)\big) \, \mathrm{d}x + \int_{\Omega} \nu(\phi) |D \mathbf{u}|^2 \, \mathrm{d}x + \int_{\Omega} |\nabla \mu|^2 \, \mathrm{d}+\int_{\Omega}|\nabla\mathbf{b}|^2 \, \mathrm{d}x  = 0.
\end{equation}
Denoting
\begin{equation*}
E(\rho, \mathbf{u},\mathbf{b},\phi) = \int_{\Omega}\big( \frac{1}{2} \rho |\mathbf{u}|^2 + \frac{1}{2} |\mathbf{b}|^2 + \frac{1}{2} |\nabla \phi|^2 + \rho \Psi(\phi)\big) \, \mathrm{d}x ,
\end{equation*}
and integrating \eqref{3.6A}  in time,  for all \( t \in [0,T] \), we obtain
\begin{equation}\label{3.7AAA}
E(\rho(t), \mathbf{u}(t),\mathbf{b}(t), \phi(t)) + \int_{0}^{t} \int_{\Omega} \nu(\phi) |D \mathbf{u}|^2 +|\nabla\mathbf{b}|^2 + |\nabla \mu|^2 \, \mathrm{d}x \, \mathrm{d}\tau = E(\rho_0, \mathbf{u}_0,\mathbf{b}_0, \phi_0).
\end{equation}
Next, integrating \(\eqref{1.1}_1\) over \(\Omega\), and using the boundary condition of \(\mathbf{u}\), we have for all \( t \in [0,T] \)
\begin{equation}\label{3.8AA}
\int_{\Omega} \rho(t) \, \mathrm{d}x = \int_{\Omega} \rho_0 \, \mathrm{d}x.
\end{equation}
and  integrating \(\eqref{1.1}_4\) over \(\Omega\) and using the boundary condition of \(\mu\), we also have
\[\frac{\mathrm{d}}{\mathrm{d}t} \int_{\Omega} \rho \phi \, \mathrm{d}x - \int_{\Omega} (\partial_t \rho + \mathrm{div}(\rho \mathbf{u})) \phi \, \mathrm{d}x = 0.\]
Thus, by \(\eqref{1.1}_1\), for all \( t \in [0,T] \),  we have
\begin{equation}\label{3.9AA}
\int_{\Omega} \rho(t) \phi(t) \, \mathrm{d}x = \int_{\Omega} \rho_0 \phi_0 \, \mathrm{d}x.
\end{equation}
Hence
\begin{equation*}
\big|\int_{\Omega}\rho(t)\phi(t)\mathrm{d}x\big|=\big|\int_{\Omega}\rho_{0}\phi_{0}\mathrm{d}x\big|\leq C,
\end{equation*}
which  together with \eqref{eq:3.2} yields  that
\begin{equation}\label{3.10AA}
\|\phi\|_{L^{\infty}(0,T;H^1(\Omega))} \leq C.
\end{equation}
Moreover, taking the $L^2$-scalar product of \eqref{1.1}$_{5}$ with $\mu$, and  using integration by
parts formula and \eqref{3.8AA}, we have
\begin{equation*}\begin{split}
& \rho_* \|\mu\|_{L^2(\Omega)}^2 \leq \|\nabla\phi\|_{L^2(\Omega)} \|\nabla\mu\|_{L^2(\Omega)} + \rho^* \|\Psi'(\phi)\|_{L^2(\Omega)} \|\mu\|_{L^2(\Omega)}  \\
& \hspace{2em} \leq \|\nabla\phi\|_{L^2(\Omega)} \|\nabla\mu\|_{L^2(\Omega)} + C(1 + \|\phi\|_{L^6(\Omega)}^3) \|\mu\|_{L^2(\Omega)}.
\end{split}\end{equation*}
According to \eqref{3.10AA}, we get
\begin{equation}\label{4.30AA}
\|\mu\|_{L^2(\Omega)}^2 \leq C(1 + \|\nabla\mu\|_{L^2(\Omega)}).
\end{equation}
At last, taking the $L^2$-scalar product of \eqref{1.1}$_{5}$ with $\Delta\phi$, and  using integration by
parts formula  and \eqref{3.8AA}, we obtain
\begin{equation}\label{4.32AA}\begin{split}
\|\Delta\phi\|_{L^2(\Omega)}^2 &= -\int_{\Omega} \rho \mu \Delta\phi \mathrm{d}x + \int_{\Omega} \rho \Psi'(\phi) \Delta\phi \mathrm{d}x \\
& \leq \rho^* \|\mu\|_{L^2(\Omega)} \|\Delta\phi\|_{L^2(\Omega)} + \rho^{\ast} \|\Psi'(\phi)\|_{L^2(\Omega)} \|\Delta\phi\|_{L^2(\Omega)}\\
& \leq C(1 + \|\mu\|_{L^2(\Omega)} + \|\phi\|_{L^6(\Omega)}^3) \|\Delta\phi\|_{L^2(\Omega)}.
\end{split}\end{equation}
It follows from   \eqref{4.30AA} and \eqref{4.32AA}  that
\begin{equation}\label{4.24AA}
\|\mu\|_{H^1(\Omega)}\leq C(1 + \|\nabla\mu\|_{L^2(\Omega)}),\quad  \|\phi\|_{H^2(\Omega)}\leq C(1 + \|\nabla\mu\|_{L^2(\Omega)}^{\frac{1}{2}}). \end{equation}

\subsubsection*{Step 3 Higher-order energy estimates.}
\noindent
To begin with, taking the $L^2$-scalar product of \eqref{1.1}$_{2}$ with $\partial_t\mathbf{u}$, we have
\begin{equation}
\begin{split}\label{4.11}
\frac{\mathrm{d}}{\mathrm{d}t}&\int_{\Omega}\frac{\nu(\phi)}{2}|D
\mathbf{u}|^2\mathrm{d}x+\int_{\Omega}\rho|\partial_t\mathbf{u}|^2\mathrm{d}x+
\int_{\Omega}\rho(\mathbf{u}\cdot\nabla)\mathbf{u}
\cdot\partial_t\mathbf{u}\mathrm{d}x
+\int_{\Omega}(\mathbf{b}\cdot\nabla)\mathbf{b}\cdot\partial_t
\mathbf{u}\mathrm{d}x\\
&=\int_{\Omega}\rho\mu\nabla\phi\cdot\partial_t\mathbf{u}\mathrm{d}x
-\int_{\Omega}\rho\nabla\Psi(\phi)\cdot\partial_t\mathbf{u}\mathrm{d}x
+\int_{\Omega}\nu'(\phi)\partial_t\phi|\mathbb{D}\mathbf{u}|^2\mathrm{d}x.
\end{split}
\end{equation}
Applying the operator $\nabla$ to \eqref{1.1}$_{3}$, then taking the $L^2$-scalar product of with $\nabla\mathbf{b}$, and using integration by parts formula, we have
\begin{equation}\label{4.12}
\frac{\mathrm{d}}{\mathrm{d}t}\int_{\Omega}\frac{1}{2}|\nabla\mathbf{b}|^2\mathrm{d}x
+\int_{\Omega}|\Delta\mathbf{b}|^2\mathrm{d}x=
-\int_{\Omega}(\mathbf{u}\cdot\nabla)\mathbf{b}\cdot
\Delta\mathbf{b}\mathrm{d}x+
\int_{\Omega}(\mathbf{b}\cdot\nabla)\mathbf{u}\cdot
\Delta\mathbf{b}\mathrm{d}x.
\end{equation}
Taking the $L^2$-scalar product of \eqref{1.1}$_{4}$ with $\partial_t\mu$, we obtain
\begin{equation}\label{4.13}
\frac{1}{2}\frac{\mathrm{d}}{\mathrm{d}t}\|\nabla\mu\|_{L^2(\Omega)}^2
+\int_{\Omega}\rho\partial_t\phi \partial_t\mu\mathrm{d}x+\int_{\Omega}\rho\mathbf{u}\cdot\nabla\phi
\partial_t\mu\mathrm{d}x=0.
\end{equation}
By using  \eqref{1.1}$_{1}$ and  \eqref{1.1}$_{5}$, we find
\begin{equation}
\begin{split}\label{4.14}
\int_{\Omega}\rho\partial_t\phi\partial_t\mu\mathrm{d}x&
=\int_{\Omega}\partial_t(\rho\mu)\partial_t
\phi\mathrm{d}x
-\int_{\Omega}\mu\partial_t\rho\partial_t\phi\mathrm{d}x\\
&=\int_{\Omega}\Delta\phi\partial_t\phi\mathrm{d}x
+\int_{\Omega}\partial_t(\rho\Psi'(\phi))\partial_t\phi\mathrm{d}x
+\int_{\Omega}\text{div}(\rho\mathbf{u})\mu\partial_t\phi\mathrm{d}x\\
&=\int_{\Omega}|\nabla\partial_t\phi|^2\mathrm{d}x
+\int_{\Omega}\partial_t\rho\Psi'(\phi)\partial_t\phi\mathrm{d}x\\
&\quad+\int_{\Omega}\rho\Psi''(\phi)|\partial_t\phi|^2\mathrm{d}x
-\int_{\Omega}\rho\mathbf{u}\cdot\nabla(\mu\partial_t\phi)\mathrm{d}x\\
&=\int_{\Omega}|\nabla\partial_t\phi|^2\mathrm{d}x
-\int_{\Omega}\text{div}(\rho\mathbf{u})\Psi'(\phi)\partial_t\phi\mathrm{d}x\\
&\quad+\int_{\Omega}\rho\Psi''(\phi)|\partial_t\phi|^2\mathrm{d}x
-\int_{\Omega}\rho\partial_t\phi\mathbf{u}\cdot\nabla\mu\mathrm{d}x
-\int_{\Omega}\rho\mu\mathbf{u}\cdot\nabla\partial_t\phi\mathrm{d}x\\
&=\int_{\Omega}|\nabla\partial_t\phi|^2\mathrm{d}x
+\int_{\Omega}\rho\Psi''(\phi)\partial_t\phi\mathbf{u}\cdot
\nabla\phi\mathrm{d}x\\
&\quad+\int_{\Omega}\rho\Psi'(\phi)\mathbf{u}
\cdot\nabla\partial_t\phi\mathrm{d}x+\int_{\Omega}\rho\Psi''(\phi)
|\partial_t\phi|^2\mathrm{d}x\\
&\quad-\int_{\Omega}\rho\partial_t\phi\mathbf{u}\cdot\nabla\mu\mathrm{d}x - \int_{\Omega}\rho\mu\mathbf{u}\cdot\nabla\partial_t\phi\mathrm{d}x.
\end{split}
\end{equation}
Besides, using once again \eqref{1.1}$_{1}$, we have
\begin{equation}
\begin{split}\label{4.15}
\int_{\Omega}\rho\mathbf{u}\cdot\nabla\phi\partial_t\mu\mathrm{d}x
=&\frac{\mathrm{d}}{\mathrm{d}t}\Big(\int_{\Omega}\rho\mu\mathbf{u}
\cdot\nabla\phi\mathrm{d}x\Big)-\int_{\Omega}\partial_t\rho\mu
\mathbf{u}\cdot\nabla\phi\mathrm{d}x \\
&-\int_{\Omega}\rho\mu\partial_t\mathbf{u}\cdot\nabla\phi\mathrm{d}x
-\int_{\Omega}\rho\mu\mathbf{u}\cdot\nabla\partial_t\phi\mathrm{d}x \\
=&\frac{\mathrm{d}}{\mathrm{d}t}\Big(\int_{\Omega}\rho\mu
\mathbf{u}\cdot\nabla\phi\mathrm{d}x\Big)
+\int_{\Omega}\text{div}(\rho\mathbf{u})\mu
\mathbf{u}\cdot\nabla\phi\mathrm{d}x \\
&-\int_{\Omega}\rho\mu\partial_t\mathbf{u}
\cdot\nabla\phi\mathrm{d}x-\int_{\Omega}\rho\mu
\mathbf{u}\cdot\nabla\partial_t\phi\mathrm{d}x \\
=&\frac{\mathrm{d}}{\mathrm{d}t}\Big(\int_{\Omega}
\rho\mu\mathbf{u}\cdot\nabla\phi\mathrm{d}x\Big)
-\int_{\Omega}\rho(\mathbf{u}\cdot\nabla\mu)
(\mathbf{u}\cdot\nabla\phi)\mathrm{d}x \\
&-\int_{\Omega}\rho\mu\mathbf{u}\cdot\nabla
(\mathbf{u}\cdot\nabla\phi)\mathrm{d}x
-\int_{\Omega}\rho\mu\partial_t\mathbf{u}\cdot\nabla\phi\mathrm{d}x \\
&-\int_{\Omega}\rho\mu\mathbf{u}\cdot\nabla\partial_t\phi\mathrm{d}x.
\end{split}
\end{equation}
Substituting \eqref{4.14} and \eqref{4.15} into \eqref{4.13}, we conclude that
\begin{equation}
\begin{split}\label{4.16}
\frac{\mathrm{d}}{\mathrm{d}t}&\Big(\frac{1}{2}\|\nabla\mu\|_{L^2(\Omega)}^2 + \int_{\Omega}\rho\mu\mathbf{u}\cdot\nabla\phi\mathrm{d}x\Big) + \int_{\Omega}|\nabla\partial_t\phi|^2\mathrm{d}x
 + \int_{\Omega}\rho\Psi''(\phi)|\partial_t\phi|^2\mathrm{d}x \\&= - \int_{\Omega}\rho\Psi''(\phi)\partial_t\phi\mathbf{u}\cdot
\nabla\phi\mathrm{d}x -\int_{\Omega}\rho\Psi'(\phi)\mathbf{u}
\cdot\nabla\partial_t\phi\mathrm{d}x + \int_{\Omega}\rho\partial_t\phi\mathbf{u}\cdot\nabla\mu\mathrm{d}x \\
&\quad +\int_{\Omega}\rho(\mathbf{u}\cdot\nabla\mu)
(\mathbf{u}\cdot\nabla\phi)\mathrm{d}x + \int_{\Omega}\rho\mu\mathbf{u}
\cdot\nabla(\mathbf{u}\cdot\nabla\phi)\mathrm{d}x \\
&\quad + \int_{\Omega}\rho\mu\partial_t\mathbf{u}\cdot\nabla\phi\mathrm{d}x + 2\int_{\Omega}\rho\mu\mathbf{u}\cdot\nabla\partial_t\phi\mathrm{d}x.
\end{split}
\end{equation}
Taking the $L^2$-scalar product of \eqref{1.1}$_{4}$ with $\partial_t\phi$, we have
\begin{equation}\label{4.17}
\int_{\Omega}\rho|\partial\phi|^2\mathrm{d}x = - \int_{\Omega}\rho\mathbf{u}\cdot\nabla\phi\partial_t\phi\mathrm{d}x - \int_{\Omega}\nabla\mu\cdot\nabla\partial_t\phi\mathrm{d}x.
\end{equation}
Thanks to $\Psi''(s) = 3s^2 - 1\geq - 1$, multiplying \eqref{4.17} by 2 and adding it to \eqref{4.16}, we get
\begin{equation}
\begin{split}\label{4.18}
&\frac{\mathrm{d}}{\mathrm{d}t}\Big(\frac{1}{2}\|\nabla\mu\|_{L^2(\Omega)}^2 + \int_{\Omega}\rho\mu\mathbf{u}\cdot\nabla\phi\mathrm{d}x\Big) + \alpha_0\|\partial_t\phi\|_{H^1(\Omega)}^2 \\
&\leq - \int_{\Omega}\rho\Psi''(\phi)\partial_t\phi
\mathbf{u}\cdot\nabla\phi\mathrm{d}x - \int_{\Omega}\rho\Psi'(\phi)\mathbf{u}\cdot\nabla\partial_t\phi\mathrm{d}x \\
&\quad + \int_{\Omega}\rho\partial_t\phi\mathbf{u}\cdot\nabla\mu\mathrm{d}x +\int_{\Omega}\rho(\mathbf{u}\cdot\nabla\mu)
(\boldsymbol{u}\cdot\nabla\phi)\mathrm{d}x \\
&\quad + \int_{\Omega}\rho\mu\mathbf{u}\cdot\nabla(\mathbf{u}\cdot\nabla\phi)\mathrm{d}x + \int_{\Omega}\rho\mu\partial_t\mathbf{u}\cdot\nabla\phi\mathrm{d}x \\
&\quad + 2\int_{\Omega}\rho\mu\mathbf{u}\cdot\nabla\partial_t\phi\mathrm{d}x - 2\int_{\Omega}\rho\mathbf{u}\cdot\nabla\phi\partial_t\phi\mathrm{d}x - 2\int_{\Omega}\nabla\mu\cdot\nabla\partial_t\phi\mathrm{d}x ,
\end{split}
\end{equation}
where $\alpha_0 = \min\{1,\rho_*\}>0$. Summing \eqref{4.11}, \eqref{4.12} and \eqref{4.16}, we arrive at
\begin{equation}
\begin{split}\label{4.19}
&\frac{\mathrm{d}}{\mathrm{d}t}\Big(\int_{\Omega}\frac{\nu(\phi)}{2}
|D\mathbf{u}|^2\mathrm{d}x+\int_{\Omega}\rho\mu\mathbf{u}\cdot
\nabla\phi\mathrm{d}x+\frac{1}{2}\|\nabla\mathbf{b}\|^2\mathrm{d}x
+\frac{1}{2}\|\nabla\mu\|_{L^2(\Omega)}^2\Big)\\
&\quad+\rho_*\|\partial_t\mathbf{u}\|_{L^2(\Omega)}^2
+\alpha_0\|\partial_t\phi\|_{H^1(\Omega)}^2 +
\alpha_0\|\Delta\mathbf{b}\|_{L^2(\Omega)}^2\\
&\leq \int_{\Omega}-\rho(\mathbf{u}\cdot\nabla)\mathbf{u}
\cdot\partial_t\mathbf{u}\mathrm{d}x
+2\int_{\Omega}\rho\mu\nabla\phi\cdot\partial_t\mathbf{u}
\mathrm{d}x\\
&\quad+\int_{\Omega}-\rho\nabla\Psi(\phi)\cdot\partial_t
\mathbf{u}\mathrm{d}x+\int_{\Omega}\nu'(\phi)
\partial_t\phi|\mathbb{D}\mathbf{u}|^2\mathrm{d}x \\
&\quad+\int_{\Omega}-\rho\Psi''(\phi)\partial_t\phi
\mathbf{u}\cdot\nabla\phi\mathrm{d}x+\int_{\Omega}
-\rho\Psi'(\phi)\mathbf{u}\cdot\nabla\partial_t\phi\mathrm{d}x \\
&\quad+\int_{\Omega}\rho\partial_t\phi
\mathbf{u}\cdot\nabla\mu\mathrm{d}x+\int_{\Omega}\rho
(\mathbf{u}\cdot\nabla\mu)(\mathbf{u}\cdot\nabla\phi)\mathrm{d}x \\
&\quad+\int_{\Omega}\rho\mu\mathbf{u}
\cdot\nabla(\mathbf{u}\cdot\nabla\phi)\mathrm{d}x+2\int_{\Omega}\rho\mu\mathbf{u}\cdot\nabla\partial_t\phi\mathrm{d}x\\
&\quad+2\int_{\Omega}-\rho\mathbf{u}\cdot\nabla\phi\partial_t
\phi\mathrm{d}x+2\int_{\Omega}-\nabla\mu
\cdot\nabla\partial_t\phi\mathrm{d}x \\
&+\int_{\Omega}(\mathbf{b}\cdot\nabla)\mathbf{b}\cdot\partial_t
\mathbf{u}\mathrm{d}x
+\int_{\Omega}(\mathbf{u}\cdot\nabla)\mathbf{b}\cdot\Delta
\mathbf{b}\mathrm{d}x
-\int_{\Omega}(\mathbf{b}\cdot\nabla)\mathbf{u}\cdot
\Delta\mathbf{b}\mathrm{d}x\\&\triangleq\sum_{i=1}^{15} \text{II}_i.
\end{split}
\end{equation}
In what follows, we  bound term by term  from the right hand side of \eqref{4.19}. By \eqref{eq:3.4}, \eqref{eq:3.8}, \eqref{4.12} and \eqref{4.24AA}, we have
\begin{equation}\label{4.25}
\begin{split}
|\text{II}_1|&\leq \rho^*\|\mathbf{u}\|_{L^6(\Omega)}\|\nabla\mathbf{u}\|_{L^3(\Omega)}
\|\partial_t\mathbf{u}\|_{L^2(\Omega)} \\
&\leq C\|\nabla\mathbf{u}\|_{L^2(\Omega)}^{\frac{3}{2}}\|\Delta
\mathbf{u}\|_{L^2(\Omega)}^{\frac{1}{2}}\|\partial_t\mathbf{u}\|_{L^2(\Omega)}\\
&\leq \frac{\rho_*}{8}\|\partial_t\mathbf{u}\|_{L^2(\Omega)}^2
+\varepsilon\|\Delta\mathbf{u}\|_{L^2(\Omega)}^2 + C\|\nabla\mathbf{u}\|_{L^2(\Omega)}^6,
\end{split}
\end{equation}
\begin{equation}\label{4.26}
\begin{split}
|\text{II}_2|&\leq 2\rho^*\|\mu\|_{L^6(\Omega)}\|\nabla\phi\|_{L^3(\Omega)}
\|\partial_t\mathbf{u}\|_{L^2(\Omega)} \\
&\leq C\|\mu\|_{H^1(\Omega)}\|\nabla\phi\|_{L^2(\Omega)}^{\frac{1}{2}}
\|\phi\|_{H^2(\Omega)}^{\frac{1}{2}}\|\partial_t\mathbf{u}\|_{L^2(\Omega)}\\
&\leq \frac{\rho_*}{8}\|\partial_t\mathbf{u}\|_{L^2(\Omega)}^2 + C\|\phi\|_{H^2(\Omega)}(1 + \|\nabla\mu\|_{L^2(\Omega)}) \\
&\leq \frac{\rho_*}{8}\|\partial_t\mathbf{u}\|_{L^2(\Omega)}^2 + C(1 + \|\nabla\mu\|_{L^2(\Omega)}^{\frac{5}{2}}),
\end{split}
\end{equation}
\begin{equation}\label{4.27}
\begin{split}
|\text{II}_3|&\leq \rho^*\|\Psi_0'(\phi)\|_{L^{\infty}(\Omega)}\|\nabla\phi\|_{L^2(\Omega)}
\|\partial_t\mathbf{u}\|_{L^2(\Omega)} \\
&\leq \frac{\rho_*}{8}\|\partial_t\mathbf{u}\|_{L^2(\Omega)}^2 + C(\|\phi\|_{L^{\infty}(\Omega)}^2 + \|\phi\|_{L^{\infty}(\Omega)}^6) \\
&\leq \frac{\rho_*}{8}\|\partial_t\mathbf{u}\|_{L^2(\Omega)}^2 + C(\|\phi\|_{H^1(\Omega)}\|\phi\|_{H^2(\Omega)} + \|\phi\|_{H^1(\Omega)}^3\|\phi\|_{H^2(\Omega)}^3)\\
&\leq \frac{\rho_*}{8}\|\partial_t\mathbf{u}\|_{L^2(\Omega)}^2 + C(\|\phi\|_{H^2(\Omega)} + \|\phi\|_{H^2(\Omega)}^3) \\
&\leq \frac{\rho_*}{8}\|\partial_t\mathbf{u}\|_{L^2(\Omega)}^2 + C(1 + \|\nabla\mu\|_{L^2(\Omega)}^{\frac{3}{2}}) ,
\end{split}
\end{equation}
\begin{equation}\label{4.28}
\begin{split}
|\text{II}_4|&\leq C\|\partial_t\phi\|_{L^6(\Omega)}\|\nabla\mathbf{u}\|_{L^3(\Omega)}
\|\nabla\mathbf{u}\|_{L^2(\Omega)} \\
&\leq \frac{\alpha_0}{14}\|\partial_t\phi\|_{H^1(\Omega)}^2 + C\|\nabla\mathbf{u}\|_{L^2(\Omega)}^3\|\Delta\mathbf{u}\|_{L^2(\Omega)} \\
&\leq \frac{\alpha_0}{14}\|\partial_t\phi\|_{H^1(\Omega)}^2 + \varepsilon\|\Delta\mathbf{u}\|_{L^2(\Omega)}^2 + C\|\nabla\mathbf{u}\|_{L^2(\Omega)}^6,
\end{split}
\end{equation}
\begin{equation}\label{4.29}
\begin{split}
|\text{II}_5|&\leq \rho^*\|\Psi''(\phi)\|_{L^3(\Omega)}\|\partial_t\phi\|_{L^6(\Omega)}
\|\mathbf{u}\|_{L^{\infty}(\Omega)}\|\nabla\phi\|_{L^2(\Omega)} \\
&\leq C\|\partial_t\phi\|_{H^1(\Omega)}
\|\nabla\mathbf{u}\|_{L^2(\Omega)}^{\frac{3}{2}}
\|\Delta\mathbf{u}\|_{L^2(\Omega)}^{\frac{1}{2}} \\
&\leq \frac{\alpha_0}{14}\|\partial_t\phi\|_{H^1(\Omega)}^2 + \varepsilon\|\Delta\mathbf{u}\|_{L^2(\Omega)}^2 + C\|\nabla\mathbf{u}\|_{L^2(\Omega)}^2 ,
\end{split}
\end{equation}
\begin{equation}\label{4.30}
\begin{split}
|\text{II}_6|&\leq \rho^*\|\Psi'(\phi)\|_{L^2(\Omega)}\|\mathbf{u}\|_{L^{\infty}(\Omega)}
\|\partial_t\phi\|_{H^1(\Omega)}\\
&\leq \frac{\alpha_0}{14}\|\partial_t\phi\|_{H^1(\Omega)}^2 + C\|\nabla\mathbf{u}\|_{L^2(\Omega)}\|\mathbb{A}\mathbf{u}\|_{L^2(\Omega)} \\
&\leq \frac{\alpha_0}{14}\|\partial_t\phi\|_{H^1(\Omega)}^2 + \varepsilon\|\Delta\mathbf{u}\|_{L^2(\Omega)}^2 + C\|\nabla\mathbf{u}\|_{L^2(\Omega)}^2 ,
\end{split}
\end{equation}
\begin{equation}\label{4.31}
\begin{split}
|\text{II}_7|&\leq \rho^*\|\partial_t\phi\|_{L^6(\Omega)}\|\mathbf{u}\|_{L^3(\Omega)}
\|\nabla\mu\|_{L^2(\Omega)} \\
&\leq \frac{\alpha_0}{14}\|\partial_t\phi\|_{H^1(\Omega)}^2 + C\|\nabla\mathbf{u}\|_{L^2(\Omega)}^2\|\nabla\mu\|_{L^2(\Omega)}^2 ,
\end{split}
\end{equation}
\begin{equation}\label{4.32}
\begin{split}
|\text{II}_8|&\leq \rho^*\|\mathbf{u}\cdot\nabla\mu\|_{L^2(\Omega)}\|\mathbf{u}\cdot
\nabla\phi\|_{L^2(\Omega)} \\
&\leq C\|\mathbf{u}\|_{L^{\infty}(\Omega)}^2\|\nabla\mu\|_{L^2(\Omega)}
\|\nabla\phi\|_{L^2(\Omega)}\\
&\leq C\|\nabla\mathbf{u}\|_{L^2(\Omega)}^2\|\nabla\mu\|_{L^2(\Omega)}(1 + \|\nabla\mu\|_{L^2(\Omega)}^{\frac{1}{2}}) \\
&\leq C(\|\nabla\mathbf{u}\|_{L^2(\Omega)}^2\|\nabla\mu\|_{L^2(\Omega)} + \|\nabla\mathbf{u}\|_{L^2(\Omega)}^2\|\nabla\mu\|_{L^2(\Omega)}^{\frac{3}{2}}) ,
\end{split}
\end{equation}
\begin{equation}\label{4.33}
\begin{split}
|\text{II}_9|&\leq \rho^*\|\mu\mathbf{u}\cdot\nabla(\mathbf{u}\cdot\nabla\phi)\|_{L^1(\Omega)} \\
&\leq C\|\mu\|_{L^6(\Omega)}\|\mathbf{u}\|_{L^6(\Omega)}(\|\nabla\mathbf{u}\|_{L^2(\Omega)}
\|\phi\|_{L^6(\Omega)}+\|\mathbf{u}\|_{L^2(\Omega)}
\|\nabla\phi\|_{L^6(\Omega)}) \\
&\leq C(1+\|\nabla\mu\|_{L^2(\Omega)})\|\nabla\mathbf{u}\|_{L^2(\Omega)}^2
\|\phi\|_{H^2(\Omega)} \\
&\leq C(1+\|\nabla\mu\|_{L^2(\Omega)}^{\frac{3}{2}}) \|\nabla\mathbf{u}\|_{L^2(\Omega)}^2 ,
\end{split}
\end{equation}
\begin{equation}\label{4.34}
\begin{split}
|\text{II}_{10}|&\leq 2\rho^*\|\mu\mathbf{u}\cdot\nabla\partial_t\phi\|_{L^1(\Omega)} \\
&\leq 2C\|\mu\|_{L^6(\Omega)}\|\mathbf{u}\|_{L^3(\Omega)}
\|\nabla\partial_t\phi\|_{L^2(\Omega)} \\
&\leq 2C(1+\|\nabla\mu\|_{L^2(\Omega)})\|\nabla\mathbf{u}\|_{L^2(\Omega)}
\|\partial_t\phi\|_{H^1(\Omega)}\\
&\leq \frac{\alpha_0}{14}\|\partial_t\phi\|_{H^1(\Omega)}^2 + C\|\nabla\mathbf{u}\|_{L^2(\Omega)}^2(1+\|\nabla\mu\|_{L^2(\Omega)}^2) ,
\end{split}
\end{equation}
\begin{equation}\label{4.35}
\begin{split}
|\text{II}_{11}|&\leq 2\rho^*\|\mathbf{u}\cdot\nabla\phi\partial_t\phi\|_{L^1(\Omega)} \\
&\leq 2C\|\mathbf{u}\|_{L^3(\Omega)}\|\nabla\phi\|_{L^2(\Omega)}
\|\partial_t\phi\|_{L^6(\Omega)} \\
&\leq 2C(E_0)\|\nabla\mathbf{u}\|_{L^2(\Omega)}
\|\nabla\phi\|_{L^2(\Omega)}
\|\partial_t\phi\|_{H^1(\Omega)} \\
&\leq \frac{\alpha_0}{14}\|\partial_t\phi\|_{H^1(\Omega)}^2 + C\|\nabla\mathbf{u}\|_{L^2(\Omega)}^2,
\end{split}
\end{equation}
\begin{equation}\label{4.36}
\begin{split}
|\text{II}_{12}|&\leq 2\|\nabla\mu\cdot\nabla\partial_t\phi\|_{L^1(\Omega)} \\
&\leq 2\|\nabla\mu\|_{L^2(\Omega)}\|\nabla\partial_t\phi\|_{L^2(\Omega)} \\
&\leq \frac{\alpha_0}{14}\|\partial_t\phi\|_{H^1(\Omega)}^2 + C\|\nabla\mu\|_{L^2(\Omega)}^2 ,
\end{split}
\end{equation}
\begin{equation}\label{4.37}
\begin{split}
|\text{II}_{13}|&\leq \|\mathbf{b}\|_{L^6(\Omega)}\|\nabla\mathbf{b}\|_{L^3(\Omega)}
\|\partial_t\mathbf{u}\|_{L^2(\Omega)} \\
&\leq C\|\nabla\mathbf{b}\|_{L^2(\Omega)}^{\frac{3}{2}}
\|\Delta\mathbf{b}\|_{L^2(\Omega)}^{\frac{1}{2}}
\|\partial_t\mathbf{u}\|_{L^2(\Omega)}\\
&\leq \frac{\rho_*}{8}\|\partial_t\mathbf{u}\|_{L^2(\Omega)}^2
+\varepsilon\|\Delta\mathbf{b}\|_{L^2(\Omega)}^2 + C\|\nabla\mathbf{b}\|_{L^2(\Omega)}^6,
\end{split}
\end{equation}
\begin{equation}\label{4.38}
\begin{split}
|\text{II}_{14}|&\leq \|\mathbf{u}\|_{L^6(\Omega)}\|\nabla\mathbf{b}\|_{L^3(\Omega)}
\|\Delta\mathbf{b}\|_{L^2(\Omega)} \\
&\leq
\|\nabla\mathbf{u}\|_{L^2(\Omega)}\|\nabla\mathbf{b}\|_{L^2(\Omega)}^{\frac{1}{2}}
\|\Delta\mathbf{b}\|_{L^2(\Omega)}^{\frac{3}{2}} \\
&\leq
C\|\nabla\mathbf{u}\|_{L^2(\Omega)}^4+C\|\nabla\mathbf{b}\|_{L^2(\Omega)}^2
+\frac{\alpha_0}{4}\|\Delta\mathbf{b}\|_{L^2(\Omega)}^3,
\end{split}
\end{equation}
\begin{equation}\label{4.39}
\begin{split}
|\text{II}_{15}|&\leq \|\mathbf{b}\|_{L^6(\Omega)}\|\nabla\mathbf{u}\|_{L^3(\Omega)}
\|\Delta\mathbf{b}\|_{L^2(\Omega)} \\
&\leq
\|\nabla\mathbf{b}\|_{L^2(\Omega)}\|\nabla\mathbf{u}\|_{L^2(\Omega)}^{\frac{1}{2}}
\|\mathbb{A}\mathbf{u}\|_{L^2(\Omega)}^{\frac{1}{2}}
\|\Delta\mathbf{b}\|_{L^2(\Omega)} \\
&\leq
C\|\nabla\mathbf{b}\|_{L^2(\Omega)}^4\|\nabla\mathbf{u}\|_{L^2(\Omega)}^2
+\varepsilon\|\Delta\mathbf{u}\|_{L^2(\Omega)}^2
+\frac{\alpha_0}{4}\|\Delta\mathbf{b}\|_{L^2(\Omega)} ^{2} \\
&\leq C\|\nabla\mathbf{b}\|_{L^2(\Omega)}^6+C\|\nabla\mathbf{u}\|_{L^2(\Omega)}^6
+\varepsilon\|\Delta\mathbf{u}\|_{L^2(\Omega)}^2
+\frac{\alpha_0}{4}\|\Delta\mathbf{b}\|_{L^2(\Omega)} ^{2}.
\end{split}
\end{equation}
 Collecting \eqref{4.25}-\eqref{4.39} all together, we obtain  the following differential inequality
\begin{equation}\label{4.40}
\begin{split}
\frac{\mathrm{d}}{\mathrm{d}t}&\Big(\int_{\Omega}\frac{\nu(\phi)}{2}
|D\mathbf{u}|^2\mathrm{d}x+\int_{\Omega}\rho\mu\mathbf{u}\cdot
\nabla\phi\mathrm{d}x+\frac{1}{2}\|\nabla\mathbf{b}\|^2\mathrm{d}x
+\frac{1}{2}\|\nabla\mu\|_{L^2(\Omega)}^2\Big)\\
&\quad+\rho_*\|\partial_t\mathbf{u}\|_{L^2(\Omega)}^2
+\alpha_0\|\partial_t\phi\|_{H^1(\Omega)}^2 +
\alpha_0\|\Delta\mathbf{b}\|_{L^2(\Omega)}^2\\
&\leq \varepsilon\|\Delta\mathbf{u}\|_{L^2(\Omega)}^2 + C\|\nabla\mathbf{u}\|_{L^2(\Omega)}^6 +C\|\nabla\mathbf{b}\|_{L^2(\Omega)}^6 \\
&\quad + C(1 + \|\nabla\mu\|_{L^2(\Omega)})\big(\|\nabla\mathbf{u}\|_{L^2(\Omega)}^2 + \|\nabla\mu\|_{L^2(\Omega)}^2\big).
\end{split}
\end{equation}
In order to bound the second derivatives of $\mathbf{u}$, let us take the  $L^2$-scalar product  of  system  \eqref{1.1}$_{2}$  with $-\Delta \mathbf{u}$
  and obtain
\begin{equation}\label{4.41}
\begin{split}
\int_{\Omega}\text{div}(\nu(\phi)D\mathbf{u})\cdot\Delta
\mathbf{u}\mathrm{d}x&=(\rho\partial_t\mathbf{u},\Delta\mathbf{u})+
(\rho(\mathbf{u}\cdot\nabla)\mathbf{u},\Delta\mathbf{u}) \\
&\quad - ((\mathbf{b}\cdot\nabla)\mathbf{b},\Delta\mathbf{u})-(\rho\mu\nabla\phi,\Delta\mathbf{u})+(\rho\nabla\Psi(\phi),\Delta
\mathbf{u}).
\end{split}
\end{equation}
Then, we rewrite \eqref{4.41} as follows
\begin{equation}\label{4.43}
\begin{split}
\int_{\Omega}\frac{\nu(\phi)}{2}|\Delta\mathbf{u}|^2\mathrm{d}x
&=(\rho\partial_t\mathbf{u},\Delta\mathbf{u})
+(\rho(\mathbf{u}\cdot\nabla)\mathbf{u},\Delta
\mathbf{u}) \\
&\quad-(\rho\mu\nabla\phi,\Delta\mathbf{u})
+(\rho\nabla\Psi(\phi),\Delta\mathbf{u}) +(\nu'(\phi)\mathbb{D}\mathbf{u}\nabla\phi,\Delta\mathbf{u})
\\
&\quad -((\mathbf{b}\cdot\nabla)\mathbf{b},\Delta\mathbf{u})
\\&\triangleq\sum_{i=1}^{6} \tilde{\text{II}}_i.
\end{split}
\end{equation}
It follows from \eqref{chazhi}, H\"{o}lder's and Young's inequalities, that
\begin{equation*}\label{4.44}
\begin{split}
| \tilde{\text{II}}_1|&\leq \rho^*\|\partial_t\mathbf{u}\|_{L^2(\Omega)}\|\Delta\mathbf{u}\|_{L^2(\Omega)}
\\&\leq\eta
\|\Delta\mathbf{u}\|_{L^2(\Omega)}^2+C_1
\|\partial_t\mathbf{u}\|_{L^2(\Omega)}^2,
\end{split}
\end{equation*}
\begin{equation*}\label{4.45}
\begin{split}
| \tilde{\text{II}}_2|&\leq \rho^*\|\mathbf{u}\|_{L^{\infty}(\Omega)}\|\nabla\mathbf{u}\|_{L^2(\Omega)}
\|\Delta\mathbf{u}\|_{L^2(\Omega)} \\
&\leq C_1\|\nabla\mathbf{u}\|_{L^2(\Omega)}^{\frac{3}{2}}
\|\Delta\mathbf{u}\|_{L^2(\Omega)}^{\frac{3}{2}}\\
&\leq
\eta\|\Delta\mathbf{u}\|_{L^2(\Omega)}^2
+C_1\|\nabla\mathbf{u}\|_{L^2(\Omega)}^6,
\end{split}
\end{equation*}
\begin{equation*}\label{4.46}
\begin{split}
| \tilde{\text{II}}_3|&\leq \rho^*\|\mu\|_{L^6(\Omega)}\|\nabla\phi\|_{L^3(\Omega)}
\|\Delta\mathbf{u}\|_{L^2(\Omega)} \\
&\leq\eta\|\Delta\mathbf{u}\|_{L^2(\Omega)}^2
+C_1\|\mu\|_{H^1(\Omega)}^2\|\phi\|_{H^2(\Omega)}^2,
\end{split}
\end{equation*}
\begin{equation*}\label{4.47}
\begin{split}
| \tilde{\text{II}}_4|&\leq \rho^*\|\Psi_0'(\phi)\|_{L^{\infty}(\Omega)}
\|\nabla\phi\|_{L^2(\Omega)}\|\Delta\mathbf{u}\|_{L^2(\Omega)} \\
&\leq C_1(1 + \|\phi\|_{L^{\infty}(\Omega)}^3)\|\Delta\mathbf{u}\|_{L^2(\Omega)} \\
&\leq C_1(1 + \|\phi\|_{H^1(\Omega)}^{\frac{3}{2}}\|\phi\|_{H^2(\Omega)}^{\frac{3}{2}})
\|\Delta\mathbf{u}\|_{L^2(\Omega)} \\
&\leq \eta\|\Delta\mathbf{u}\|_{L^2(\Omega)}^2 + C_1(1 + \|\phi\|_{H^2(\Omega)}^3),
\end{split}
\end{equation*}
\begin{equation*}\label{4.48}
\begin{split}
| \tilde{\text{II}}_5|&\leq C\|\nabla\mathbf{u}\|_{L^3(\Omega)}\|\nabla\phi\|_{L^6(\Omega)}\|\mathbb{A}
\mathbf{u}\|_{L^2(\Omega)} \\
&\leq C_1\|\nabla\mathbf{u}\|_{L^2(\Omega)}^{\frac{1}{2}}
\|\Delta\mathbf{u}\|_{L^2(\Omega)}^{\frac{3}{2}}\|\phi\|_{H^2(\Omega)} \\
&\leq \eta\|\Delta\mathbf{u}\|_{L^2(\Omega)}^2 + C_1\|\nabla\mathbf{u}\|_{L^2(\Omega)}^2\|\phi\|_{H^2(\Omega)}^4,
\end{split}
\end{equation*}
\begin{equation*}\label{4.50}
\begin{split}
| \tilde{\text{II}}_6|&\leq \|\mathbf{b}\|_{L^{\infty}(\Omega)}\|\nabla\mathbf{b}\|_{L^2(\Omega)}
\|\Delta\mathbf{u}\|_{L^2(\Omega)} \\
&\leq C_1\|\nabla\mathbf{b}\|_{L^2(\Omega)}^{\frac{3}{2}}
\|\Delta\mathbf{b}\|_{L^2(\Omega)}^{\frac{1}{2}}
\|\Delta\mathbf{u}\|_{L^2(\Omega)}
\\&\leq C_1\|\nabla\mathbf{b}\|_{L^2(\Omega)}^{6}+\eta
\|\Delta\mathbf{b}\|_{L^2(\Omega)}^{2}
+\eta\|\Delta\mathbf{u}\|_{L^2(\Omega)}^2.
\end{split}
\end{equation*}
Then inserting the above estimates into \eqref{4.43}, we have
\begin{equation}\label{4.43AA}
\begin{split}
\int_{\Omega}\frac{\nu(\phi)}{2}|\Delta\mathbf{u}|^2\mathrm{d}x
&\leq C_1\|\partial_t\mathbf{u}\|_{L^2(\Omega)}^2+C_1\|\nabla\mathbf{u}\|_{L^2(\Omega)}^6+C_1\|\mu\|_{H^1(\Omega)}^2\|\phi\|_{H^2(\Omega)}^2
 +C_1(1 + \|\phi\|_{H^2(\Omega)}^3)\\&\quad+ C_1\|\nabla\mathbf{u}\|_{L^2(\Omega)}^2\|\phi\|_{H^2(\Omega)}^4+C_1\|\nabla\mathbf{b}\|_{L^2(\Omega)}^{6}+\eta
\|(\Delta\mathbf{b},\Delta\mathbf{b})\|_{L^2(\Omega)}^{2}.
\end{split}
\end{equation}
At last, we deal with  the gradient of the pressure. Taking the divergence of system  \eqref{1.1}$_{2}$  gives rise to
\begin{equation*}\Delta P=\Dv\big(-\rho\partial_t\mathbf{u}-\rho(\mathbf{u}\cdot\nabla)\mathbf{u}+(\mathbf{b}\cdot\nabla)\mathbf{b}+\nu'(\phi)D\mathbf{u}\nabla\phi+\rho\mu\nabla\phi-\rho\nabla\Psi(\phi)\big),
\end{equation*} and consequently the pressure $p$ may be recovered by
\begin{equation*} \nabla P =\nabla\Delta^{-1}\Dv\big(-\rho\partial_t\mathbf{u}-\rho(\mathbf{u}\cdot\nabla)\mathbf{u}+(\mathbf{b}\cdot\nabla)\mathbf{b}+\nu'(\phi)D\mathbf{u}\nabla\phi+\rho\mu\nabla\phi-\rho\nabla\Psi(\phi)\big),
\end{equation*}
which together with  the bounded of Riesz's  operator yields that
\begin{equation}\begin{split}\label{3.6-2} \|\nabla P\|_{L^2(\Omega)}^2&\leq C_2\|\partial_t\mathbf{u}\|_{L^2(\Omega)}^2+C_1\|\nabla\mathbf{u}\|_{L^2(\Omega)}^6+C_2\|\mu\|_{H^1(\Omega)}^2\|\phi\|_{H^2(\Omega)}^2
 \\&\quad+C_2(1 + \|\phi\|_{H^2(\Omega)}^3)+ C_2\|\nabla\mathbf{u}\|_{L^2(\Omega)}^2\|\phi\|_{H^2(\Omega)}^4+C\|\nabla\mathbf{b}\|_{L^2(\Omega)}^{6}+\eta
\|\Delta\mathbf{b}\|_{L^2(\Omega)}^{2}.
\end{split}\end{equation}
Denoting
\begin{equation*}
\begin{split}\label{4.20}
H(\rho,\mathbf{u},\mathbf{b},\phi,\mu)&
=\int_{\Omega}\frac{\nu(\phi)}{2}|D\mathbf{u}|^2\mathrm{d}x
+\frac{1}{2}\|\nabla\mathbf{b}\|^2\mathrm{d}x+\frac{1}{2}\|\nabla\mu\|_{L^2(\Omega)}^2
\\
&\quad+\int_{\Omega}\rho\mu\mathbf{u}\cdot\nabla\phi\mathrm{d}x,
\end{split}
\end{equation*}
\begin{align*}
F=F(\mathbf{u},\mathbf{b},\phi,P)=\|\Delta \mathbf{u}\|_{L^2(\Omega)}^2+\|\Delta \mathbf{b}\|_{L^2(\Omega)}^2+\|\partial_t \mathbf{u}\|_{L^2(\Omega)}^2 +\|\partial_t\phi\|_{H^1(\Omega)}^2+\|\nabla P\|_{L^2(\Omega)}^2.
\end{align*}
Thanks to \eqref{eq:3.4} and \eqref{4.24AA}, we observe that
\begin{equation}
\begin{split}\label{4.21}
\Big|\int_{\Omega}\rho\mu\mathbf{u}\cdot\nabla\phi\mathrm{d}x \Big|&\leq \rho^*\|\mu\|_{L^6(\Omega)}\|\mathbf{u}\|_{L^3(\Omega)}\|\nabla\phi\|_{L^2(\Omega)}\\
&\leq C(\rho^*)(1 + \|\nabla\mu\|_{L^2(\Omega)})\|\mathbf{u}\|_{L^2(\Omega)}^{\frac{1}{2}}
\|\nabla\mathbf{u}\|_{L^2(\Omega)}^{\frac{1}{2}} \\
&\leq \int_{\Omega}\frac{\nu(\phi)}{4}|D\mathbf{u}|^2\mathrm{d}x
+\frac{1}{4}\|\nabla\mu\|_{L^2(\Omega)}^2 +C(\rho^*),
\end{split}
\end{equation}
which implies that
\begin{equation*}
\begin{split}\label{4.20}
H(\rho,\mathbf{u},\mathbf{b},\phi,\mu)&
\approx\int_{\Omega}\frac{\nu(\phi)}{2}|D\mathbf{u}|^2\mathrm{d}x
+\frac{1}{2}\|\nabla\mathbf{b}\|^2\mathrm{d}x+\frac{1}{2}\|\nabla\mu\|_{L^2(\Omega)}^2+C(\rho^*).
\end{split}
\end{equation*}
Adding \eqref{4.40}, \eqref{4.43AA}$\times \frac{\rho^*}{8C_1}$ and  \eqref{3.6-2}$\times \frac{\rho^*}{8C_2}$, and then
taking  small enough $\varepsilon,\eta$,
we finally conclude that
\begin{equation}\label{4.58}
\frac{\mathrm{d}}{\mathrm{d}t}H + F \leq C(1+H^3).
\end{equation}
Define $X(t)=1+H + \int_{0}^{t}F(\tau)d\tau.$ Hence, whenever $T$ satisfies  $CX^{2}(0)t<\frac{1}{2}$ for  $t\in [0,T]$, we obtain
\begin{equation*}\label{3.10-1-A} X^{2}(t)\leq\frac{X^{2}(0)}{1-2CX^{2}(0)t}< \infty\quad \text{for all }\  t\in [0,T].\end{equation*}


\subsubsection*{Step 4 Approximated regular solutions.}
As in \cite{AR1}, we use energy methods and Galerkin approximations. We consider the family of
eigenfunctions $\{ w_j\}_{j\geq 1}$ of the operator $A_1=-\Delta+I$ with homogeneous Neumann boundary condition,    and the family of eigenfunctions
$\{\boldsymbol{w}_j\}_{j\geq 1}$ of the Stokes operator $\boldsymbol{A}$  with homogeneous with Dirichlet boundary condition.
For any integer $m\geq 1$, we define the finite-dimensional subspaces of $V$ and  $\boldsymbol{V}_\sigma$ respectively, by
$ V_m=\text{span}\{ w_1,...,w_m\} $ and  $\boldsymbol{V}_m= \text{span}\{ \boldsymbol{w}_1,...,\boldsymbol{w}_m\}$.
We denote by $\Pi_m$ and $\mathbb{P}_m$ the orthogonal projections on $V_m$ and $\boldsymbol{V}_m$
with respect to the inner product in $H$ and $\boldsymbol{H}_\sigma$, respectively. 
For any $m\in \mathbb{N}$, we consider $(\rho_{m0},\boldsymbol{u}_{m0}, \boldsymbol{b}_{m0}, \phi_{m0})$, where
$\mathbf{u}_m(\cdot,0)=\mathbb{P}_m\mathbf{u}_0$,
$\mathbf{b}_m(\cdot,0)=\mathbb{P}_m\mathbf{b}_0$,
$\phi_m(\cdot,0)=\Pi_m\phi_0$ satisfy
\begin{equation}\label{4.12A}
\boldsymbol{u}_{m0}\to \mathbf{u}_0,\, \boldsymbol{b}_{m0}\to \mathbf{b}_0, \, \text{strongly in}\,  L^{2}(\Omega), \phi_{m0}\to \phi_{0}, \, \text{strongly in}\,  H^{1}(\Omega), \, \text{as}\,  m\to\infty.
\end{equation}
Then, we consider the following  approximate solutions  through the semi-Galerkin method
 \begin{equation*}
\rho_m\in\mathcal{C}^1(\overline{Q_T}),\, \mathbf{u}_m\in\mathcal{C}^1([0,T];\mathbf{V}_m),\,
\mathbf{b}_m\in\mathcal{C}^1([0,T];\mathbf{V}_m),\,
\phi_m\in\mathcal{C}^1([0,T];V_m),\, \mu_m\in\mathcal{C}([0,T];V_m)
\end{equation*}
with $Q_T = \Omega\times(0,T)$. Moreover, recalling that $\mathbf{u}_m(\cdot,0)\to\mathbf{u}_0$ in $\mathbf{V}_{\sigma}$,
$\mathbf{b}_m(\cdot,0)\to\mathbf{b}_0$ in $\mathbf{V}_{\sigma}$
and $\phi_m(\cdot,0)\to\phi_0^k$ in $H^3(\Omega)$,   it then follows from Step1-Step4, that
\begin{equation}\label{4.8}
\rho_*\leq\rho_m(x,t)\leq\rho^*, \quad \forall (x,t)\in\overline{Q_T},
\end{equation}
\begin{equation}\label{4.9}
\|\mathbf{u}_m\|_{L^{\infty}(0,T;\mathbf{H}_{\sigma})}\leq C, \|\mathbf{u}_m\|_{L^2(0,T;\mathbf{V}_{\sigma})}\leq C, \\
\|\mathbf{b}_m\|_{L^{\infty}(0,T;\mathbf{H}_{\sigma})}\leq C,  \|\mathbf{b}_m\|_{L^2(0,T;\mathbf{V}_{\sigma})}\leq C,
\end{equation}
and
\begin{equation}\label{4.10}
\|\phi_m\|_{L^{\infty}(0,T;H^1(\Omega))}\leq C, \quad \|\phi_m\|_{L^2(0,T;H^2(\Omega))}\leq C, \quad \|\mu_m\|_{L^2(0,T;H^1(\Omega))}\leq C,
\end{equation}
where $C$ is independent of $m$. Based on  the Aubin-Lions lemma,  we can  prove  that $(\rho_m,\mathbf{u}_m,\phi_m,\mu_m)$ (up to
a subsequence) converges to the limit $(\rho,\mathbf{u}, \phi,\mu)^\top$ satisfying the initial--boundary value problem \eqref{1.1}-\eqref{1.2MHD} as stated in Theorem \ref{1.2} as $m\to\infty$.

\subsection{The two  dimensional case}
This part is devoted to  proving the global existence of strong solution for the initial-boundary value problem
	\eqref{1.1}-\eqref{1.2MHD}   with arbitrary large data. Compared with the three dimensional case, the only difference consists in the estimates of the terms $\text{II}_1$, $\text{II}_4$, $\text{II}_{13}$, $\text{II}_{14}$, $\text{II}_{15}$ and $ \tilde{\text{II}}_2$, $ \tilde{\text{II}}_5$, $ \tilde{\text{II}}_6$. By exploiting \eqref{eq:3.3}, \eqref{eq:3.7}, \eqref{4.24AA} and  H\"{o}lder's and Young's inequalities,  we deduce that
\begin{equation}\label{4.59}
\begin{split}
|\text{II}_1|&\leq \rho^*\|\mathbf{u}\|_{L^4(\Omega)}\|\nabla \mathbf{u}\|_{L^4(\Omega)}\|\partial_t \mathbf{u}\|_{L^2(\Omega)}\\
&\leq C_{2}\|\nabla \mathbf{u}\|_{L^2(\Omega)}\|\Delta \mathbf{u}\|_{L^2(\Omega)}^{\frac{1}{2}}\|\partial_t \mathbf{u}\|_{L^2(\Omega)}\\
&\leq \frac{\rho_*}{8}\|\partial_t \mathbf{u}\|_{L^2(\Omega)}^2+\varepsilon\|\Delta \mathbf{u}\|_{L^2(\Omega)}^2+C_{2}\|\nabla \mathbf{u}\|_{L^2(\Omega)}^4,
\end{split}
\end{equation}
\begin{equation}\label{4.60}
\begin{split}
|\text{II}_4|&\leq C_{2}\|\partial_t\phi\|_{H^1(\Omega)}\|\nabla \mathbf{u}\|_{L^2(\Omega)}^2\ln^{\frac{1}{2}}\Big(C_{2}\frac{\|\Delta \mathbf{u}\|_{L^2(\Omega)}}{\|\nabla \mathbf{u}\|_{L^2(\Omega)}}\Big)\\
&\leq \frac{\alpha_0}{14}\|\partial_t\phi\|_{H^1(\Omega)}^2+C_{2}\|\nabla \mathbf{u}\|_{L^2(\Omega)}^4\ln\Big(C_{2}\frac{\|\Delta \mathbf{u}\|_{L^2(\Omega)}}{\|\nabla \mathbf{u}\|_{L^2(\Omega)}}\Big),
\end{split}
\end{equation}
\begin{equation}\label{4.61}
\begin{split}
|\text{II}_{13}|&\leq \rho^*\|\mathbf{b}\|_{L^4(\Omega)}\|\nabla\mathbf{b}\|_{L^4(\Omega)}
\|\partial_t\mathbf{u}\|_{L^2(\Omega)} \\
&\leq C_{2}\|\nabla\mathbf{b}\|_{L^2(\Omega)}
\|\Delta\mathbf{b}\|_{L^2(\Omega)}^{\frac{1}{2}}
\|\partial_t\mathbf{u}\|_{L^2(\Omega)}\\
&\leq \frac{\rho_*}{8}\|\partial_t\mathbf{u}\|_{L^2(\Omega)}^2
+\frac{\alpha_0}{6}\|\Delta\mathbf{b}\|_{L^2(\Omega)}^2 + C_{2}\|\nabla\mathbf{b}\|_{L^2(\Omega)}^4,
\end{split}
\end{equation}
\begin{equation}\label{4.62}
\begin{split}
|\text{II}_{14}|&\leq \rho^*\|\mathbf{u}\|_{L^4(\Omega)}\|\nabla\mathbf{b}\|_{L^4(\Omega)}
\Delta\mathbf{b}\|_{L^2(\Omega)} \\
&\leq
\rho^*\|\nabla\mathbf{u}\|_{L^2(\Omega)}^{\frac{1}{2}}
\|\nabla\mathbf{b}\|_{L^2(\Omega)}^{\frac{1}{2}}
\|\Delta\mathbf{b}\|_{L^2(\Omega)}^{\frac{1}{2}}
\|\Delta\mathbf{b}\|_{L^2(\Omega)} \\
&\leq
C_{2}\|\nabla\mathbf{u}\|_{L^2(\Omega)}^2\|\nabla\mathbf{b}\|_{L^2(\Omega)}^2
+\frac{\alpha_0}{6}\|\Delta\mathbf{b}\|_{L^2(\Omega)}^2 ,
\end{split}
\end{equation}
\begin{equation}\label{4.63}
\begin{split}
|\text{II}_{15}|&\leq \|\mathbf{b}\|_{L^4(\Omega)}
\|\nabla\mathbf{u}\|_{L^4(\Omega)}\|\Delta\mathbf{b}\|_{L^2(\Omega)} \\
&\leq
\|\nabla\mathbf{b}\|_{L^2(\Omega)}^{\frac{1}{2}}
\|\nabla\mathbf{u}\|_{L^2(\Omega)}^{\frac{1}{2}}
\|\Delta\mathbf{u}\|_{L^2(\Omega)}^{\frac{1}{2}}
\|\Delta\mathbf{b}\|_{L^2(\Omega)}  \\
&\leq
C_{2}\|\nabla\mathbf{b}\|_{L^2(\Omega)}^2\|\nabla\mathbf{u}\|_{L^2(\Omega)}^2
+\varepsilon\|\Delta\mathbf{u}\|_{L^2(\Omega)}^2
+\frac{\alpha_0}{6}\|\Delta\mathbf{b}\|_{L^2(\Omega)} ^{2}.
\end{split}
\end{equation}
Collecting \eqref{4.26}-\eqref{4.27}, \eqref{4.29}-\eqref{4.36}, and \eqref{4.59}-\eqref{4.63} all together, and using \eqref{4.24AA}, we infer that
\begin{equation}\label{4.64}
\begin{split}
\frac{\mathrm{d}}{\mathrm{d}t}H&+\frac{\rho_*}{2}\|\partial_t \mathbf{u}\|_{L^2(\Omega)}^2+\frac{\alpha_0}{2}\|\partial_t\phi\|_{H^1(\Omega)}^2
+\frac{\alpha_0}{2}\|\Delta\mathbf{b}\|_{L^2(\Omega)}^2\\
&\leq \varepsilon\|\Delta \mathbf{u}\|_{L^2(\Omega)}^2+C_2\|\nabla \mathbf{u}\|_{L^2(\Omega)}^4\ln\Big(C_2\frac{\|\mathbf{A} \mathbf{u}\|_{L^2(\Omega)}}{\|\nabla \mathbf{u}\|_{L^2(\Omega)}}\Big )
\\&\quad+C_2\big(\|\nabla \mathbf{u}\|_{L^2(\Omega)}^2+\|\nabla \mathbf{b}\|_{L^2(\Omega)}^2+\|\nabla\mu\|_{L^2(\Omega)}^2\big)H.
\end{split}
\end{equation}
In what follows,  we estimate $ \tilde{\text{II}}_2$, $\ \tilde{\text{II}}_5$ and $ \tilde{\text{II}}_6$ of \eqref{4.43}. It follows from  \eqref{eq:3.3}, \eqref{4.9} and \eqref{4.10},  H\"{o}lder's and Young's inequalities, that
\begin{equation}\label{4.65}
\begin{split}
| \tilde{\text{II}}_2|&\leq \rho^*\|\mathbf{u}\|_{L^4(\Omega)}\|\nabla \mathbf{u}\|_{L^4(\Omega)}\|\Delta \mathbf{u}\|_{L^2(\Omega)}\\
&\leq C_{2}\|\nabla \mathbf{u}\|_{L^2(\Omega)}\|\Delta \mathbf{u}\|_{L^2(\Omega)}^{\frac{3}{2}}\\&
\leq \eta\|\Delta \mathbf{u}\|_{L^2(\Omega)}^2+C_{2}\|\nabla \mathbf{u}\|_{L^2(\Omega)}^4,
\end{split}
\end{equation}
\begin{equation}\label{4.66}
\begin{split}
| \tilde{\text{II}}_5|&\leq C\|\nabla \mathbf{u}\|_{L^4(\Omega)}\|\nabla\phi\|_{L^4(\Omega)}\|\Delta \mathbf{u}\|_{L^2(\Omega)}\\
&\leq C_{2}\|\nabla \mathbf{u}\|_{L^2(\Omega)}^{\frac{1}{2}}\|\mathbf{A} \mathbf{u}\|_{L^2(\Omega)}^{\frac{3}{2}}\|\nabla\phi\|_{L^2(\Omega)}^{\frac{1}{2}}
\|\phi\|_{H^2(\Omega)}^{\frac{1}{2}}\\
&\leq \eta\|\Delta \mathbf{u}\|_{L^2(\Omega)}^2+C_{2}\|\nabla \mathbf{u}\|_{L^2(\Omega)}^2\|\phi\|_{H^2(\Omega)}^2,
\end{split}
\end{equation}
\begin{equation}\label{4.68}
\begin{split}
| \tilde{\text{II}}_6|&\leq \|\mathbf{b}\|_{L^4(\Omega)}\|\nabla \mathbf{b}\|_{L^4(\Omega)}\|\Delta \mathbf{u}\|_{L^2(\Omega)}\\
&\leq C_{2}\|\nabla \mathbf{b}\|_{L^2(\Omega)}\|\Delta \mathbf{b}\|_{L^2(\Omega)}^{\frac{1}{2}}\|\Delta \mathbf{u}\|_{L^2(\Omega)}\\
&\leq \eta\|\Delta \mathbf{u}\|_{L^2(\Omega)}^2+\eta\|\Delta \mathbf{b}\|_{L^2(\Omega)}^2+C_{2}\|\nabla \mathbf{b}\|_{L^2(\Omega)}^4.
\end{split}
\end{equation}
Therefore,  combining with \eqref{4.64},  \eqref{4.65}-\eqref{4.68}   \eqref{4.43AA} and  \eqref{3.6-2},
 we are lead to
\begin{equation}\label{4.72}
\begin{split}
\frac{\mathrm{d}}{\mathrm{d}t}H+F
&\leq C_2(\|\nabla \mathbf{u}\|_{L^2(\Omega)}^2 +\|\nabla \mathbf{b}\|_{L^2(\Omega)}^2 +\|\nabla \mu\|_{L^2(\Omega)}^2)H + C_2\|\nabla \mathbf{u}\|_{L^2(\Omega)}^4 \ln\Big(C_2\frac{\| \Delta\mathbf{u}\|_{L^2(\Omega)}}{\|\nabla \mathbf{u}\|_{L^2(\Omega)}}\Big)\\&\quad+\eta\|\Delta \mathbf{u}\|_{L^2(\Omega)}^2+\varepsilon\|\Delta \mathbf{u}\|_{L^2(\Omega)}^2+\eta\|\Delta \mathbf{b}\|_{L^2(\Omega)}^2.
\end{split}
\end{equation}
Moreover,  it follows from  the basic inequality
\begin{align*}
&x\ln(Cy)\leq y + x\ln(Cx)\quad \forall x,y > 0,
\end{align*}
with $x = \|\nabla\mathbf{u}\|_{L^2(\Omega)}^2$, $y = \frac{\|\Delta\mathbf{u}\|_{L^2(\Omega)}}
{\|\nabla\mathbf{u}\|_{L^2(\Omega)}}$,  that
\begin{align*}
C_2\|\nabla\mathbf{u}\|_{L^2(\Omega)}^4
\ln\Big(C_2\frac{\|\Delta\mathbf{u}\|_{L^2(\Omega)}}
{\|\nabla\mathbf{u}\|_{L^2(\Omega)}}\Big)
&\leq C_2\|\nabla\mathbf{u}\|_{L^2(\Omega)}\|\Delta\mathbf{u}\|_{L^2(\Omega)}+ C_2\|\nabla\mathbf{u}\|_{L^2(\Omega)}^4
\ln\Big(C_2\|\nabla\mathbf{u}\|_{L^2(\Omega)}^2\Big)\\
&\leq \varepsilon\|\Delta\mathbf{u}\|_{L^2(\Omega)}^2 + C_2\|\nabla\mathbf{u}\|_{L^2(\Omega)}^2+ C_2\|\nabla\mathbf{u}\|_{L^2(\Omega)}^4\ln\Big(C_2(e + \|\nabla\mathbf{u}\|_{L^2(\Omega)}^2)\Big).
\end{align*}
Then taking  small enough $\varepsilon,\eta$, we eventually arrive at
\begin{equation}\label{4.73}
\frac{\mathrm{d}}{\mathrm{d}t}H + F\leq C_2(\|\nabla\mathbf{u}\|_{L^2(\Omega)}^2 +\|\nabla\mathbf{b}\|_{L^2(\Omega)}^2 + \|\nabla\mu\|_{L^2(\Omega)}^2)(e+H)\ln\big(e+H\big),
\end{equation}
which together with Gronwall's inequality and \eqref{3.7AAA} yields
\begin{equation*}\label{4.73}
H + \int_{0}^{t}F(\tau)d\tau \leq C \, \quad \text{for all }\,   t\geq0.
\end{equation*}

\section{The proof of uniqueness}
The last section is devoted   to presenting the proof to the uniqueness part in  both
Theorems \ref{1.2} and \ref{1.3}, which mainly rely on the weighted energy estimates,  the shift of integrability method and  Lagrangian
 coordinates.
\subsection{Weighted energy estimates }
\label{extra}
In this section, our main objective is to transfer integrability from the time variable to the spatial variables.
To achieve this, we derive time-weighted estimates, such as
\begin{align*}
&(\sqrt{\rho t}\,\mathbf{u}_t,\sqrt{t}\mathbf{b}_t, \ \sqrt{\rho t}\,\phi_t, \ \sqrt{t}\,\nabla \phi_t) \in L^{\infty}([0,T]; L^2)\\
&(\sqrt{t}\,\nabla \mathbf{u}_t,\nabla \mathbf{b}_t, \ \sqrt{t}\,\nabla \mu_t) \in L^2([0,T]; L^2),
\quad \text{and} \quad
\sqrt{t\rho}\,\mu_t \in L^2([0,T]; L^2).
\end{align*}
These results are established through the following two lemmas, corresponding to the two and three dimensional cases, respectively.
\begin{lemma}\label{lemma3.2}
Assume \(d = 3\), and that a strong solution $(\rho,\mathbf{u},\mathbf{b},P,\phi,\mu)$ to the  initial--boundary value problem \eqref{1.1}-\eqref{1.2MHD}. Then for all \(0\leq t\leq T\), we have
\begin{align}\label{3D}
\|\sqrt{\rho t}\mathbf{u}_t\|_2^2&+\|\sqrt{t}\mathbf{b}_t\|_2^2+\|\sqrt{\rho t}\phi_t\phi\|_2^2+\|\sqrt{ t}\nabla\phi_t\|_2^2+\|\sqrt{\rho t}\phi_t\|_2^2
+\int_{0}^{t}\nu_*\tau\|\nabla \mathbf{u}_t\|_2^2d\tau\\ \notag
&+\int_{0}^{t}\tau\|\nabla \mathbf{b}_t\|_2^2d\tau+\int_{0}^{t}\tau\|\nabla\mu_t\|_2^2d\tau
+\int_{0}^{t}\tau\rho\|\mu_t\|_2^2d\tau
\leq\exp\big(\int_{0}^{t}h_1(\tau)d\tau\big)-1,
\end{align}
where
\begin{align*}
&h_1(t)=
(1 + C + C_{\rho^*} + C_{\rho^*\rho_*T} + C_T)
\Big(\big( \|\phi\|_{12}^3 + \|\phi\|_4 \big)^2
\big( \|\nabla^2 \phi\|_4^2 + \|\nabla \phi\|_4^2 + \|\mathbf{u}\|_4^2 \big) \\
&+ \big( \|\nabla \phi\|_6 + \|\phi\|_\infty^2 \|\nabla\phi\|_6 \big)^2
\big( \|\mathbf{u}\|_6^2 + \|\nabla\phi\|_4^2  + \|\mathbf{u}\|_4^2 \big) + \|\mathbf{u}\|_4^2 \big( \|\nabla^2 \phi\|_6 + \|\nabla \phi\|_{12}^2 \|\phi\|_\infty + \|\nabla^2 \phi\|_6 \|\phi\|_\infty^2 \big)^2 \\
&+ \|\nabla \mathbf{u}\|_2^2 + \| \mu\|_4^2 \|\nabla \phi\|_4^2 + \|\nabla \phi\|_2^2 + \|\phi\|_\infty^4 + \|\nabla \mu\|_2^2 + \|\nabla \phi\|_\infty^2
 + \|\nabla^2 \phi\|_2^2 + \|\nabla \phi\|_4^2 + \|\phi\|_8^4 \|\nabla\phi\|_4^2 \\
&+ \|\nabla \phi\|_4^8 + \big( \|\phi\|_\infty^2 + 1 \big)^2 \|\nabla \phi\|_4^2 + \|\mathbf{u}\|_4^4 + \|\mathbf{u}\|_6^4 + \|\phi\|_6^4 + \|\nabla \mathbf{u}\|_2^2  + \|\mathbf{u}\|_4 \|\nabla \phi\|_4 \|\phi\|_6 + \|\nabla \mu\|_2^2 + \|\nabla \phi\|_4^2\\& + \|\phi\|_\infty^4
+ \|\nabla \phi\|_4^2 + \|\mathbf{u}\|_6^2 + \|\mathbf{u}\|_2^2 + \|\nabla \mathbf{u}\|_2^2 + \|\nabla \mathbf{u}\|_2^{\frac{7}{2}} \\
&+ \|\mathbf{u}\|_4^2 \big( \|\nabla \phi\|_4^2 + \|\nabla^2 \phi\|_4^2 + \|\mu\|_4^2 \big)
+ \|\nabla^2 \phi\|_4^2 \|\mu\|_4^2 + \|\nabla \phi\|_8^2 \|\phi\|_8^2
+ \big( \|\phi\|_\infty^3 + \|\phi\|_\infty\big)^2 \Bigr)\\
&\times\Big( \|\mathbf{u}\|_{\infty}^2 +\|\mathbf{b}\|_{\infty}^2 + \|\mathbf{u}\|_{\infty}^4 + \|\mu\|_{\infty}^2 + \|\phi_t\|_2^2 + \| \phi_t\|_4^2 + \| \phi_t\|_6^2 + \|\phi_t\|_2 + \|\mathbf{u}_t\|_2^2 + \|\nabla \phi_t\|_2^2 \\
&+  \|\nabla^2 \mathbf{u}\|_2^2 + \|\nabla \mathbf{u}\|_3^2 +\|\nabla \mathbf{b}\|_3^2 + \|\nabla \mu\|_4^2 + \|\nabla \mu\|_6^2 + \|\nabla^2 \mu\|_6^2 + \|\nabla^2 \mu\|_4^2 + \|\nabla \mu\|_{w^{1,6}}^2 \Big)
\end{align*}
is $ L^1_{loc}(\mathbb{R}^+) $ only depends on, \( \rho^* \), \( \rho_* \), $T$ and the initial value.
\end{lemma}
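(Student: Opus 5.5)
We differentiate the momentum, induction and Cahn--Hilliard equations in time and test the results with $t$-weighted time derivatives. The aim is a closed differential inequality of the form $\frac{d}{dt}Y\le h_1(t)(1+Y)$ for
\[
Y(t)=\|\sqrt{\rho t}\mathbf{u}_t\|_2^2+\|\sqrt{t}\mathbf{b}_t\|_2^2+\|\sqrt{\rho t}\phi_t\|_2^2+\|\sqrt{t}\nabla\phi_t\|_2^2+\|\sqrt{\rho t}\phi_t\phi\|_2^2 ,
\]
with the dissipative quantities $\tau\|\nabla\mathbf{u}_t\|_2^2$, $\tau\|\nabla\mathbf{b}_t\|_2^2$, $\tau\|\nabla\mu_t\|_2^2$ and $\tau\|\sqrt\rho\mu_t\|_2^2$ added on the left. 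Since $Y(0)=0$ because of the weight, Gronwall then gives $1+Y(t)\le\exp(\int_0^t h_1)$, which is exactly \eqref{3D}.

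\textbf{Step 1: momentum and induction.} Differentiate \eqref{1.1}$_2$ in time and test with $t\mathbf{u}_t$. We use \eqref{1.1}$_1$ in the form $\rho_t=-\mathbf{u}\cdot\nabla\rho$ and move the derivative off $\rho$ by integrating by parts. This is the standard device for nonhomogeneous flows with only bounded density: for example $\int\rho_t|\mathbf{u}_t|^2=\int\rho\mathbf{u}\cdot\nabla|\mathbf{u}_t|^2$. Korn's inequality \eqref{eq:3.8} together with $\nu\ge\nu_*$ then gives the dissipation $\nu_* t\|\nabla\mathbf{u}_t\|_2^2$. The term $\int\nu'(\phi)\phi_t D\mathbf{u}:\nabla\mathbf{u}_t$ is bounded by $\|\phi_t\|_6\|\nabla\mathbf{u}\|_3\|\nabla\mathbf{u}_t\|_2$. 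The Korteweg force is written as $\rho\mu\nabla\phi-\rho\nabla\Psi(\phi)$ up to a gradient. Its time derivative produces $\rho_t$, $\mu_t$ and $\phi_t$ terms; the $\rho_t$ pieces are again integrated by parts through $\rho\mathbf{u}$. Next, differentiate \eqref{1.1}$_3$ and test with $t\mathbf{b}_t$. The coupling terms $\int(\mathbf{b}\cdot\nabla)\mathbf{b}_t\cdot\mathbf{u}_t$ and $\int(\mathbf{b}\cdot\nabla)\mathbf{u}_t\cdot\mathbf{b}_t$ cancel, exactly as in the energy identity \eqref{3.6A}. The remaining terms, such as $\int(\mathbf{u}_t\cdot\nabla)\mathbf{b}\cdot\mathbf{b}_t$, are bounded by $\|\nabla\mathbf{b}\|_3$ or $\|\mathbf{b}\|_\infty$ times weighted norms. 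This is the origin of the factors $\|\mathbf{b}\|_\infty^2$ and $\|\nabla\mathbf{b}\|_3^2$ in $h_1$. The terms $\frac12\|\mathbf{u}_t\|^2$ and $\frac12\|\mathbf{b}_t\|^2$ coming from $\frac{d}{dt}t$ are integrable by Step 3 of Section 3, since $\mathbf{u}_t\in L^2(L^2)$ and $\mathbf{b}_t\in L^2(L^2)$ follow from the equation.

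\textbf{Step 2: phase field.} Differentiate \eqref{1.1}$_4$ in time and test with $t\mu_t$. Differentiate \eqref{1.1}$_5$ in time and test with $t\phi_t$, and, to control $\|\sqrt{\rho t}\phi_t\phi\|$ and $\sqrt{t\rho}\mu_t$, also with $t\phi_t$ multiplied by appropriate weights. The identity $\rho\mu_t=-\Delta\phi_t+\rho\Psi''(\phi)\phi_t+\rho_t(\Psi'(\phi)-\mu)$ produces $\|\sqrt t\nabla\phi_t\|^2$ and $\int t\rho\Psi''(\phi)\phi_t^2=\int t\rho(3\phi^2-1)\phi_t^2$. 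The good part $3\int t\rho\phi^2\phi_t^2$ is the reason $\|\sqrt{\rho t}\phi_t\phi\|_2^2$ appears among the controlled quantities, while $-\int t\rho\phi_t^2$ is absorbed by Gronwall. Every $\rho_t$ is replaced by $-\Dv(\rho\mathbf{u})$ and integrated by parts onto smooth factors. This produces terms of the type $\int t\rho\mathbf{u}\cdot\nabla(\Psi'(\phi)\phi_t)\mu_t$. These are estimated by Hölder with $L^4$, $L^6$, $L^{12}$ and $L^\infty$ norms of $\phi,\nabla\phi,\nabla^2\phi,\mathbf{u},\mu$, plus Young's inequality to absorb $\varepsilon t\|\nabla\mu_t\|^2$, $\varepsilon t\|\nabla\mathbf{u}_t\|^2$ and $\varepsilon t\|\sqrt\rho\mu_t\|^2$. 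This bookkeeping generates the long first factor of $h_1$. The second factor collects the quantities, such as $\|\mathbf{u}_t\|_2^2$, $\|\nabla\phi_t\|_2^2$, $\|\nabla^2\mathbf{u}\|_2^2$ and $\|\nabla\mu\|_{W^{1,6}}^2$, whose time integrability follows from the estimates $H+\int_0^tF\le C$ of Section 3 and from elliptic regularity for \eqref{1.1}$_5$, which gives $\mu\in L^2(W^{2,6})$.

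\textbf{Main obstacle.} The hardest step is controlling the terms involving $\rho_t$ and $\mu_t$ without any spatial regularity of $\rho$. Each $\rho_t$ must be converted into $\Dv(\rho\mathbf{u})$ and moved onto the other factors, which then demands one derivative more on $\phi_t$, $\mu$ or $\mathbf{u}_t$. The delicate cross term is $\int t\rho\mu_t\,\mathbf{u}\cdot\nabla\phi_t$, coming from the convective part of \eqref{1.1}$_4$. I would first try to absorb it using the dissipation $t\|\sqrt\rho\mu_t\|^2$ and $t\|\nabla\phi_t\|^2$, bounding it by $\|\mathbf{u}\|_\infty^2$ times the energy; this explains the factor $\|\mathbf{u}\|_\infty^2+\|\mathbf{u}\|_\infty^4$ in $h_1$. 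Finally we sum everything, check that every coefficient is of the form (first factor)$\times$(second factor) and hence lies in $L^1(0,T)$ by Section 3, and apply Gronwall.
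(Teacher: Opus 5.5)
Your plan follows the paper's route: differentiate \eqref{1.1}$_2$--\eqref{1.1}$_5$ in time, test with $t\mathbf{u}_t$, $t\mathbf{b}_t$, $t\mu_t$, $t\phi_t$, eliminate $\rho_t$ through $\rho_t=-\operatorname{div}(\rho\mathbf{u})$, and close by Gronwall from $Y(0)=0$. The momentum/induction part is sound. Your cancellation of $\int t(\mathbf{b}\cdot\nabla)\mathbf{b}_t\cdot\mathbf{u}_t$ against $\int t(\mathbf{b}\cdot\nabla)\mathbf{u}_t\cdot\mathbf{b}_t$ (using $\operatorname{div}\mathbf{b}=0$, $\mathbf{b}|_{\partial\Omega}=0$) is correct, and cleaner than the paper's separate bounds on $I_{14}$ and $H_5$.

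The gap is in Step 2. You substitute $\rho\mu_t=-\Delta\phi_t+\rho\Psi''(\phi)\phi_t+\rho_t(\Psi'(\phi)-\mu)$ into $\int t\rho\phi_{tt}\mu_t$. That substitution is what gives $\frac12\frac{d}{dt}\|\sqrt t\nabla\phi_t\|_2^2$ and $\frac12\frac{d}{dt}\int t\rho\Psi''(\phi)\phi_t^2$, but it also produces
\[
\int_\Omega t\,\phi_{tt}\,\rho_t\,\bigl(\Psi'(\phi)-\mu\bigr)\,dx=\int_\Omega t\,\phi_{tt}\,\rho_t\,\frac{\Delta\phi}{\rho}\,dx ,
\]
which is exactly the paper's $K_2+K_3+K_6$. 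Your rule ``replace $\rho_t$ by $-\operatorname{div}(\rho\mathbf{u})$ and integrate by parts onto smooth factors'' fails on this term. It yields $\int t\rho\,\mathbf{u}\cdot\nabla\bigl(\phi_{tt}(\Psi'(\phi)-\mu)\bigr)$, i.e.\ $\nabla\phi_{tt}$, and nothing in $Y$ or in the dissipation controls $\phi_{tt}$ at all.

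Integrating by parts in time does not help either. It brings in $\rho_{tt}=-\operatorname{div}(\rho_t\mathbf{u}+\rho\mathbf{u}_t)$, and once $\rho_t$ is moved off again you need $\nabla^2\phi_t$. That cannot be expected to be bounded when $\rho$ jumps: differentiating \eqref{1.1}$_5$ in time shows $\Delta\phi_t$ contains $\rho_t(\Psi'(\phi)-\mu)$, a measure carried by the jump set of $\rho$.

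If instead you use the identity only when testing with $t\phi_t$, you get $t\|\nabla\phi_t\|_2^2$ and $3t\|\sqrt\rho\,\phi\phi_t\|_2^2$ only in $L^1$ in time. That is not the $\sup_t$ bound asserted in \eqref{3D}.

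So the real obstacle is this $\rho_t\phi_{tt}$ term, not $\int t\rho\mu_t\,\mathbf{u}\cdot\nabla\phi_t$. The latter is an immediate H\"{o}lder--Young bound by $\varepsilon\|\sqrt{\rho t}\mu_t\|_2^2+C\|\mathbf{u}\|_\infty^2\|\sqrt t\nabla\phi_t\|_2^2$. The paper does not resolve the hard term either: it writes $K_2$, $K_3$, $K_6$ down and defers all $K_i$, $L_i$ to an outside reference.

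To complete your argument you need an ingredient that never pairs $\rho_t$ with $\phi_{tt}$. One possible direction is to differentiate the Cahn--Hilliard part along particle paths with $\partial_t+\mathbf{u}\cdot\nabla$. This operator annihilates $\rho$ and trades $\rho_t$ for commutators $[\Delta,\mathbf{u}\cdot\nabla]$, which involve only $\nabla\mathbf{u}$, $\nabla^2\mathbf{u}$ times derivatives of $\phi,\mu$. The price is new boundary terms, since $\partial_n(\mathbf{u}\cdot\nabla\phi)\neq0$ on $\partial\Omega$ in general.
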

\begin{proof}
Differentiating \eqref{1.1}$_2$ with respect to $t$, respectively, multiplying by $\sqrt t$, and taking the inner product with $\sqrt t \mathbf{u}_t$, we have
\begin{align*}
\frac{1}{2}\frac{d}{dt}&\int_\Omega\rho t|\mathbf{u}_t|^{2}dx-\int_\Omega\frac{d}{dt}\bigl(\Dv(\nu(\phi)D\mathbf{u}\bigr)\sqrt t\cdot\sqrt t \mathbf{u}_tdx+\int_\Omega\nabla P_t\cdot t\mathbf{u}_tdx\\
=&-\int_\Omega\frac{d}{dt}\bigl(\Dv\nabla\phi\otimes\nabla\phi\bigr)\sqrt t\cdot \sqrt t \mathbf{u}_tdx+\frac{1}{2}\int_\Omega\rho |\mathbf{u}_t|^{2}dx-\frac{1}{2}\int t\rho_t|\mathbf{u}_t|^{2}dx-\int_\Omega\sqrt t \mathbf{b}_t\cdot\nabla \mathbf{b}\cdot \sqrt t \mathbf{u}_tdx\\
&-\int_\Omega\sqrt t \mathbf{b}\cdot\nabla \mathbf{b}_t\cdot \sqrt t \mathbf{u}_tdx-\int_\Omega\sqrt t\rho_t \mathbf{u}\cdot\nabla \mathbf{u}\cdot\sqrt t \mathbf{u}_tdx-\int_\Omega\sqrt\rho\mathbf{u}_t\cdot\nabla \mathbf{u}\cdot \sqrt t \mathbf{u}_tdx-\int_\Omega\sqrt t \rho \mathbf{u}\cdot\nabla \mathbf{u}_t\cdot\sqrt t \mathbf{u}_tdx.
\end{align*}
On the other hand,
\begin{align*}
-&\int_\Omega\frac{d}{dt}\Dv\bigl(\nu(\phi)D\mathbf{u}\bigr)\sqrt t\cdot\sqrt t \mathbf{u}_tdx
\ \\
&=\int_\Omega Dv\bigl(\nu'(\phi)\phi_tDu\bigr)\cdot t \mathbf{u}_tdx+\int_\Omega \Dv\bigl(\nu(\phi )D\mathbf{u}_t\bigr)\cdot t \mathbf{u}_tdx\\
&=\int_\Omega \nu'(\phi)\phi_tD\mathbf{u}\cdot \nabla(t \mathbf{u}_t)dx+\int_\Omega \nu(\phi )D\mathbf{u}_t\cdot \nabla (t \mathbf{u}_t)dx,
\end{align*}
which together with  \eqref{1.1}$_{5}$,  yields that
\begin{align*}
-&\int_\Omega\frac{d}{dt}\big(\Dv\nabla\phi\otimes\nabla\phi\big)\sqrt t\cdot \sqrt t \mathbf{u}_tdx\\&=\int_\Omega\frac{d}{dt}\bigl(\rho\mu\cdot\nabla\phi-\rho\nabla\Phi(\phi)
-\nabla(\frac{1}{2}|\nabla\phi|^2)\bigr)\sqrt t\cdot\sqrt t \mathbf{u}_tdx\\
&=\int_\Omega\rho_t\mu\cdot\nabla\phi\cdot t \mathbf{u}_tdx+\int_\Omega t\rho\mu_t\cdot\nabla\phi\cdot \mathbf{u}_tdx+
\int_\Omega t\rho\mu\cdot\nabla \phi_t\cdot\mathbf{u}_tdx\\
&\quad-\int_\Omega t\rho_t\Phi^{'}(\phi)\cdot\nabla\phi \cdot\mathbf{u}_tdx
- \int_\Omega t \rho{\Phi}''(\phi) \phi_t\cdot \nabla \phi\cdot \mathbf{u}_tdx- \int_{\Omega} t \rho \Phi'(\phi)\cdot \nabla \phi_t\cdot \mathbf{u}_tdx.
\end{align*}
Observing that
\begin{align*}
-&\int_{\Omega} \frac{d}{dt} \big( \nabla  \frac{1}{2} |\nabla \phi|^2\big)\cdot  t \mathbf{u}_tdx
\\&= -\int_{\Omega} \frac{d}{dt} \big( \nabla  \frac{1}{2} |\nabla \phi|^2 t \cdot \mathbf{u}_t\big) dx
+\int_{\Omega}\nabla( \frac{1}{2} |\nabla \phi|^2) \cdot\mathbf{u}_tdx
+\int_{\Omega} \nabla( \frac{1}{2} |\nabla \phi|^2 ) \cdot t\mathbf{u}_{tt}dx \\
&= -\frac{d}{dt}  \int_{\Omega}\nabla  (\frac{1}{2} |\nabla \phi|^2 )\cdot t \mathbf{u}_tdx
- \int_{\Omega}  ( \frac{1}{2} |\nabla \phi|^2 )\cdot\text{div} \, \mathbf{u}_tdx
- \int_{\Omega}   \frac{1}{2} |\nabla \phi|^2 \cdot\text{div} \, t \mathbf{u}_{tt}dx  \\
&=-\frac{d}{dt} \int_{\Omega}  \frac{1}{2} |\nabla \phi|^2 \cdot\text{div} \, t \mathbf{u}_t dx= 0,
\end{align*}
and employing  Korn's inequality, we conclude that
\begin{equation}
\begin{split}
&\frac{1}{2} \frac{d}{dt} \int_{\Omega} t\rho |\mathbf{u}_t|^2 dx + \frac{1}{2} \int_{\Omega} \nu(\phi) t|\nabla \mathbf{u}_t|^2 dx\label{eq-31} \\
&\leq \frac{1}{2} \int_{\Omega} \rho |\mathbf{u}_t|^2dx - \frac{1}{2} \int_{\Omega} t\rho_t \, |\mathbf{u}_t|^2dx - \int_{\Omega} \left( \sqrt{t} \rho_t \mathbf{u} \cdot \nabla \mathbf{u} \right)\cdot \left( \sqrt{t} \mathbf{u}_t \right) dx- \int_{\Omega} \left( \sqrt{t} \rho \mathbf{u}_t \cdot \nabla \mathbf{u} \right) \cdot\left( \sqrt{t} \mathbf{u}_t \right) dx \\
&\quad- \int_{\Omega} \sqrt{t} \rho \mathbf{u} \cdot \nabla \mathbf{u}_t \cdot\left( \sqrt{t} \mathbf{u}_t \right) dx - \int_{\Omega}\nu'(\phi) \, \phi_t D\mathbf{u} \cdot \nabla t \mathbf{u}_tdx + \int_{\Omega} \left( \sqrt{t} \rho_t \mu \nabla \phi \right) \cdot\left( \sqrt{t} \mathbf{u}_t \right) dx \\
&\quad+ \int_{\Omega} \left( \sqrt{t} \rho \mu_t \nabla \phi \right)\cdot \left( \sqrt{t} \mathbf{u}_t \right) dx + \int_{\Omega} \left( \sqrt{t} \rho \mu \nabla \phi_t \right) \cdot\left( \sqrt{t} \mathbf{u}_t \right) dx - \int_{\Omega} \sqrt{t} \rho_t \Phi'(\phi) \nabla \phi\cdot \left( \sqrt{t} \mathbf{u}_t \right) dx \\
&\quad- \int_{\Omega} \sqrt{t}  \rho \Phi''(\phi) \, \phi_t \nabla \phi\cdot \left( \sqrt{t}\mathbf{u}_t \right) dx - \int_{\Omega} \left( \sqrt{t}  \rho \Phi'(\phi) \nabla \phi_t \right)\cdot \left( \sqrt{t} \mathbf{u}_t \right)dx\\
&\quad-\int_\Omega\left(\sqrt t \mathbf{b}_t\cdot\nabla \mathbf{b} \right)\cdot \left(\sqrt t \mathbf{u}_t \right)dx
-\int_\Omega\left(\sqrt t \mathbf{b}\cdot\nabla \mathbf{b}_t\right)\cdot \left(\sqrt t \mathbf{u}_t\right)dx
\\&\triangleq\sum_{i=1}^{14} I_i .
\end{split}
\end{equation}
Differentiating \eqref{1.1}$_3$ with respect to $t$, respectively, multiplying by $\sqrt t$, and taking the inner product with $\sqrt t \mathbf{b}_t$, yield that
\begin{equation}
\begin{split}\label{eq-43}
\frac{1}{2}&\frac{d}{dt}\int_\Omega t|\mathbf{b}_t|^{2}dx+\int_\Omega t|\nabla\mathbf{b}_t|^{2}dx\\
&\leq\frac{1}{2}\int_\Omega|\mathbf{b}_t|^{2}dx-\int_\Omega \mathbf{u}_t \cdot\nabla \mathbf{b}\cdot t \mathbf{b}_tdx-\int_\Omega \mathbf{u} \cdot\nabla \mathbf{b}_t\cdot t \mathbf{b}_tdx+\int_\Omega \mathbf{b}_t \cdot\nabla \mathbf{u}\cdot t \mathbf{b}_tdx+\int_\Omega \mathbf{b} \cdot\nabla \mathbf{u}_t\cdot t \mathbf{b}_tdx
\\&\triangleq\sum_{i=1}^{5} H_i .
\end{split}
\end{equation}
Differentiating \eqref{1.1}$_4$ with respect to $t$, respectively, multiplying by $\sqrt t$, and taking the inner product with $\sqrt t \mu_t$, yield that
\begin{align*}-\int_{\Omega} |\nabla \sqrt{t} \mu_t|^2 \, dx&=\int_{\Omega} (\sqrt{t} \rho_t \phi_t)(\sqrt{t} \mu_t) \, dx + \int_{\Omega} \rho (\sqrt{t} \phi_t)_t \sqrt{t} \mu_t \, dx - \frac{1}{2} \int_{\Omega} \rho \phi_t \mu_t \, dx \\
&+ \int_{\Omega} (\sqrt{t} \rho_t \mathbf{u}\cdot \nabla\phi)(\sqrt{t} \mu_t) \, dx+ \int_{\Omega} (\sqrt{t} \rho \mathbf{u}_t\cdot \nabla\phi)(\sqrt{t} \mu_t) \, dx \\
&+ \int_{\Omega} (\sqrt{t} \rho \mathbf{u}\cdot \nabla\phi_t)(\sqrt{t} \mu_t) \, dx.
\end{align*}
By  virtue of \eqref{1.1}$_5$,  we obtain
\begin{align*}
\int_{\Omega} \rho (\sqrt{t} \phi_t)_t \sqrt{t} \mu_t \, dx &= \int_{\Omega} (\sqrt{t} \phi_t)_t \sqrt{t} \left(-\Delta \phi_t + \rho_t (\phi^3 - \phi) + 3\rho \phi^2 \phi_t - \rho \phi_t - \rho_t \mu \right) dx \\
&= \frac{1}{2} \frac{d}{dt} \int_{\Omega} |\nabla \sqrt{t} \phi_t|^2 dx + \int_{\Omega} (\sqrt{t} \phi_t)_t \sqrt{t} \rho_t \phi^3 dx - \int_{\Omega} (\sqrt{t} \phi_t)_t \sqrt{t} \rho_t \phi dx \\
&+ 3 \int_{\Omega} (\sqrt{t} \phi_t)_t \sqrt{t} \rho \phi^2 \phi_t \, dx - \int_{\Omega} (\sqrt{t} \phi_t)_t \sqrt{t} \rho \phi_t \, dx - \int_{\Omega} (\sqrt{t} \phi_t)_t \sqrt{t} \rho_t \mu dx.
\end{align*}
Then we have
\begin{equation}\label{eq-32}
\begin{split}
&\int_{\Omega} |\nabla \sqrt{t} \mu_t|^2 dx + \frac{1}{2} \frac{d}{dt} \int_{\Omega} |\nabla \sqrt{t} \phi_t|^2 dx\\& = -\int_{\Omega} \sqrt{t} \rho_t \phi_t \sqrt{t} \mu_t dx - \int_{\Omega} (\sqrt{t} \phi_t)_t \sqrt{t} \rho_t \phi^3 dx + \int_{\Omega} (\sqrt{t} \phi_t)_t \sqrt{t} \rho_t \phi \, dx\\
 &\quad- 3\int_{\Omega} (\sqrt{t} \phi_t)_t \sqrt{t} \rho \phi^2 \phi_t dx + \int_{\Omega}
 (\sqrt{t} \phi_t)_t \sqrt{t} \rho \phi_t dx + \int_{\Omega} (\sqrt{t} \phi_t)_t \sqrt{t} \rho_t \mu dx + \frac{1}{2} \int_{\Omega} \rho\phi_t \mu_t dx \\
&\quad- \int_{\Omega} (\sqrt{t} \rho_t \mathbf{u}\cdot \nabla\phi)(\sqrt{t} \mu_t) dx
- \int_{\Omega} (\sqrt{t} \rho \mathbf{u}_t \cdot\nabla\phi)(\sqrt{t} \mu_t) dx + \int_{\Omega} (\sqrt{t} \rho \mathbf{u}\cdot \nabla\phi_t)(\sqrt{t} \mu_t) \\&\triangleq\sum_{i=1}^{10} K_i.
\end{split}
\end{equation}
Furthermore, differentiating \eqref{1.1}$_5$, \eqref{1.1})$_4$ with respect to $t$, respectively, multiplying by $\sqrt t$, then taking the inner product with $\sqrt t \phi_t$, $\sqrt t \mu_t$ and summing them together,  we conclude that
\begin{equation}\label{eq-33}
\begin{split}
&\frac{1}{2} \frac{d}{dt} \int_{\Omega}  \rho t|\phi_t|^2 dx + \int_{\Omega}t\rho |\mu_t|^2 dx
\\&\leq \frac{1}{2} \int_{\Omega} \rho|\phi_t|^2 dx-\frac{1}{2} \int_{\Omega} t\rho_t|\phi_t|^2 dx - \int_{\Omega} (\sqrt{t} \rho_t \mathbf{u}\cdot \nabla\phi)(\sqrt{t} \phi_t) dx \\
&\quad- \int_{\Omega} (\sqrt{t} \rho \mathbf{u}_t\cdot \nabla\phi)(\sqrt{t} \phi_t) dx - \int_{\Omega} (\sqrt{t} \rho \mathbf{u}\cdot \nabla\phi_t)(\sqrt{t} \phi_t) dx - \int_{\Omega} (\sqrt{t} \rho_t \mu)(\sqrt{t} \mu_t) dx \\
&\quad+ \int_{\Omega} \bigl(\sqrt{t} \rho_t (\phi^3 - \phi)\bigr)(\sqrt{t} \mu_t) dx + \int_{\Omega} \sqrt{t} \rho (3\phi^2 \phi_t - \phi_t)(\sqrt{t} \mu_t)
\\&\triangleq\sum_{i=1}^{8} L_i.
\end{split}
\end{equation}
In what follows, let us bound these terms in the right hand sides of  \eqref{eq-32}-\eqref{eq-33}.  For \( I_2 \), thanks to \( \rho_t = - \mathbf{u} \cdot \nabla \rho \), we have
\begin{equation}
\begin{split}
|I_2 |&\leq C \big| \int_{\Omega} t \mathrm{div}(\rho \mathbf{u}) |\mathbf{u}_t|^2 dx \big|\\
&\leq C \int_{\Omega} t \rho |\mathbf{u}| |\nabla \mathbf{u}_t| |u_t| dx  \\
&\leq C\big( \int_{\Omega} \rho t |\mathbf{u}_t|^2 dx \big)^{\frac{1}{2}} \big( \int_{\Omega} t \rho |\mathbf{u}|^2 |\nabla \mathbf{u}_t|^2 dx \big)^{\frac{1}{2}} \\
&\leq C_\rho*  \| \sqrt{t \rho} \mathbf{u}_t \|_2 \| \mathbf{u} \|_{\infty} \| \sqrt{t} \nabla \mathbf{u}_t \|_2  \\
&\leq \varepsilon \| \sqrt{t} \nabla \mathbf{u}_t \|_2^2 + C_\rho*   \| \mathbf{u} \|_{\infty}^2\| \sqrt{t \rho} \mathbf{u}_t \|_2^2.
\end{split}
\end{equation}
For \( I_3 \), according to \( \rho_t = - \mathbf{u} \cdot \nabla \rho \) and then performing an integration by parts, we get
\begin{equation}\label{3.6666}
\begin{split}
|I_3 |&\leq \big| - \int_{\Omega} (\sqrt{t} \rho_t \mathbf{u} \cdot \nabla \mathbf{u}) \cdot (\sqrt{t} \mathbf{u}_t) dx \big|\\
&\leq \big| - \int_{\Omega} t \rho \mathbf{u} \cdot \nabla \big[ (\mathbf{u} \cdot \nabla) \mathbf{u} \cdot \mathbf{u}_t \big] dx \big| \\
&\leq \int_{\Omega} t \rho |\mathbf{u}| \big( |\nabla \mathbf{u}|^2 |\mathbf{u}_t| + |\mathbf{u}| |\nabla^2 \mathbf{u}| |\mathbf{u}_t| + |\mathbf{u}| |\nabla \mathbf{u}| |\nabla \mathbf{u}_t|)dx  \\
&\triangleq \sum_{i=1}^6 I_{3i}.
\end{split}
\end{equation}
Now we deal with the terms $I_{3i}$$(i=1,2,3,4,5,6)$ in the above inequality \eqref{3.6666}.  For $I_{31}$, it follows from H\"{o}lder's and Young's inequalities and $\dot{H}^1\hookrightarrow L^6(\Omega)$ that
\begin{align*}
I_{31} &\leq \sqrt{\rho^* T} \| \sqrt{\rho t} \mathbf{u}_t \|_4 \| \mathbf{u} \|_6 \| \nabla \mathbf{u} \|_{24/7}^2 \\
&\leq \sqrt{\rho^* T} \| \sqrt{\rho t} \mathbf{u}_t \|_2^{1/4} \| \sqrt{\rho t} \mathbf{u}_t \|_6^{3/4} \| u \|_6 \| \nabla \mathbf{u} \|_{24/7}^2 \\
&\leq \varepsilon \| \nabla \sqrt{t} u_t \|_2^2 + C_{T,\rho^*} \| \sqrt{\rho t} \mathbf{u}_t \|_2^{2/5} \| \nabla \mathbf{u} \|_{24/7}^{16/5} \| \nabla \mathbf{u} \|_2^{8/5}.
\end{align*}
Due to
\[
\| \nabla \mathbf{u} \|_{24/7}^{16/5} \leq C \| \nabla \mathbf{u} \|_2^{6/5} \| \nabla^2 \mathbf{u} \|_2^2,
\]
thus
\begin{align*}
I_{31} &\leq \varepsilon \| \nabla \sqrt{t} \mathbf{u}_t \|_2^2 + C_{T,\rho^*} \| \sqrt{\rho t} \mathbf{u}_t \|_2^{2/5} \| \nabla \mathbf{u} \|_2^{14/5} \| \nabla^2 \mathbf{u} \|_2^2 \\
&\leq \varepsilon \| \nabla \sqrt{t} \mathbf{u}_t \|_2^2 + C_{T,\rho^*} \big( \| \sqrt{\rho t} \mathbf{u}_t \|_2^2 + \| \nabla \mathbf{u}\|_2^{7/2} \big) \| \nabla^2\mathbf{u} \|_2^2.
\end{align*}
Similarly, we also  get
\begin{align*}
I_{32} &= \int_{\Omega} t \rho |\mathbf{u}|^2 | \nabla^2 \mathbf{u} | |\mathbf{u}_t| dx \leq  C _{T \rho^{\ast} } \| \nabla^2\mathbf{u} \|_2^2 + C\| \mathbf{u} \|_{\infty}^4 \| \sqrt{\rho t} \mathbf{u}_t \|_2^2,\\
I_{33} &= \int_{\Omega} t \rho |\mathbf{u}|^2 \|\nabla \mathbf{u}\| \|\nabla \mathbf{u}_t\| dx \\
&\leq \varepsilon \int_{\Omega} |\nabla \sqrt{t} \mathbf{u}_t|^2 dx + C \int_{\Omega} t \rho^2 |\mathbf{u}|^4 \|\nabla \mathbf{u}\|^2 dx \\
&\leq \varepsilon |\nabla \sqrt{t} \|\mathbf{u}_t\|_2^2  + C_{T\rho*} \| \mathbf{u} \|_{\infty}^4 \|\nabla \mathbf{u}\|_2^2.
\end{align*}
For $I_4$ and $I_5$, we conclude from  $\dot{H}^1\hookrightarrow L^6(\Omega)$, that
\begin{equation}
\begin{split}
|I_4| &\leq \big| -\int _{\Omega}\sqrt{t} \rho \mathbf{u}_t \cdot \nabla \mathbf{u} \cdot \sqrt{t} \mathbf{u}_t dx\big|
\\&\leq C_{\rho*}\| \nabla \mathbf{u} \|_3 \| \sqrt{\rho t} \mathbf{u}_t \|_2 \| \sqrt{t} \mathbf{u}_t \|_6\\
&\leq C_{\rho*} \| \nabla \mathbf{u} \|_3 \| \sqrt{\rho t} \mathbf{u}_t \|_2 \| \sqrt{t} \nabla \mathbf{u}_t \|_2
 \\&\leq \varepsilon \| \sqrt{t} \nabla \mathbf{u}_t \|_2^2 + C_{\rho*} \| \nabla \mathbf{u} \|_3^2 \| \sqrt{\rho t} \mathbf{u}_t \|_2^2 ,
 \end{split}
\end{equation}
\begin{equation}
\begin{split}
|I_5| &\leq \big| -\int _{\Omega}\sqrt{t} \rho \mathbf{u} \cdot \nabla \mathbf{u}_t \cdot \sqrt{t} \mathbf{u}_t dx\big|
 \\&\leq C_{\rho*} \| \sqrt{\rho t} \mathbf{u}_t \|_2 \| \mathbf{u} \|_{\infty} \| \sqrt{t} \nabla \mathbf{u}_t \|_2 \\
&\leq \varepsilon \| \sqrt{t} \nabla \mathbf{u}_t \|_2^2 + C_{\rho*} \| \mathbf{u}\|_{\infty}^2 \| \sqrt{\rho t} \mathbf{u}_t \|_2^2 .
\end{split}
\end{equation}
For $I_6$, due to $\nu = \nu(s) \in W^{1,\infty}(\mathbb{R})$, $\dot{H}^1\hookrightarrow L^6(\Omega)$, we get
\begin{equation}
\begin{split}
|I_6| &\leq \big|- \int _{\Omega}\nu'(\phi) \phi_t D \mathbf{u} \cdot \nabla t\mathbf{u}_t dx \big|
\\& \leq C \| \sqrt{t} \nabla \mathbf{u}_t \|_2 \|\sqrt t \phi_t \|_6 \| D \mathbf{u} \|_3\\
&\leq \varepsilon \| \sqrt{t} \nabla \mathbf{u}_t \|_2^2 + C\| \nabla \mathbf{u} \|_3^2 \|\nabla\sqrt t\phi_t \|_2^2.
\end{split}
\end{equation}
For \( I_7\), according to \( \rho_t = - \mathbf{u}\cdot \nabla \rho \) and then performing an integration by parts, we get
\begin{equation}
\begin{split}\label{3.1000}
|I_7| &\leq \big| -\int_{\Omega} (\sqrt{t} \, \text{div} \rho \mathbf{u}) \cdot(\mu \nabla\phi\cdot \sqrt{t} \, \mathbf{u}_t) \, dx \big| \\&
\leq\big| \int_{\Omega} t \, \rho \mathbf{u} \cdot \nabla (\mu \nabla\phi\cdot u_t) \, dx \big| \\
&\leq \big| \int_{\Omega} t \, \rho \mathbf{u} \cdot \nabla \mu \cdot\nabla \phi\cdot \mathbf{u}_t \, dx \big| + \big| \int_{\Omega} t \, \rho \mathbf{u} \,\cdot \mu \nabla^2 \phi \cdot \mathbf{u}_t \, dx \big| + \big| \int_{\Omega} t \, \rho \mathbf{u} \,\cdot \mu \nabla \phi \cdot \nabla \mathbf{u}_t \, dx \big|
\\&\triangleq \sum_{i=1}^3 I_{7i}.
\end{split}
\end{equation}
We deal with  the terms $I_{7i}$$(i=1,2,3)$ in the above inequality \eqref{3.1000}. It follows from H\"{o}lder's and Young's inequalities that
\begin{align*}
I_{71} &\leq C_{T} \rho * \, \|\sqrt{\rho t} \, \mathbf{u}_t\|_2 \, \|\mathbf{u}\|_{\infty} \, \|\nabla \mu\|_4 \, \|\nabla \phi\|_4 \leq  C_{T} \rho *\|\mathbf{u}\|_{\infty}^2 \, \|\sqrt{\rho t} \, \mathbf{u}_t\|_2^2 + C\|\nabla \mu\|_4^2 \, \|\nabla \phi\|_4^2, \\
I_{72} &\leq C_{T} \rho * \, \|\sqrt{\rho t} \, \mathbf{u}_t\|_2 \, \|\mathbf{u}\|_{\infty} \, \|\nabla^2 \phi\|_4 \, \| \mu\|_4 \leq  C_{T} \rho *\|\mathbf{u}\|_{\infty}^2 \, \|\sqrt{\rho t} \, \mathbf{u}_t\|_2^2 + C\|\nabla^2 \phi\|_4^2 \, \| \mu\|_4^2, \\
I_{73} &\leq C_{T} \rho *  \, \|\nabla \sqrt t\mathbf{u}_t\|_2 \, \|\mathbf{u}\|_{\infty} \, \|\mu\|_4 \, \|\nabla \phi\|_4 \leq \varepsilon \|\nabla\, \sqrt t \mathbf{u}_t\|_2^2 + C_{\rho*T} \|\mathbf{u}\|_{\infty}^2 \, \|\mu\|_4^2 \, \|\nabla \phi\|_4^2.
\end{align*}
For \( I_8\) and \( I_9\),  according to H\"{o}lder's and Young's inequalities,  we have
\begin{align}
|I_8| &\leq \big| \int_{\Omega} ( \sqrt{t}\rho \, \mu_t \, \nabla \phi) \cdot\, (\sqrt{t} \, \mathbf{u}_t) \, dx \big|\notag
\\& \leq C_{\rho*} \, \|\sqrt{\rho t} \, \mu_t\|_2 \, \|\nabla \, \phi\|_4 \, \|\sqrt{t}\mathbf{u}_t\|_4\\ \notag
&\leq C_{\rho*T} \, \|\sqrt{\rho t} \, \mu_t\|_2 \, \|\nabla \, \phi\|_4 \, \|\nabla\sqrt{t}\mathbf{u}_t\|_2^\frac{3}{4}\|\mathbf{u}_t\|_2^\frac{1}{4}\notag\\
 &\leq \varepsilon\|\nabla \sqrt{t} \, \mathbf{u}_t\|_2^2 +\varepsilon\|\sqrt{\rho t} \, \mu_t\|_2^2 + C_{\rho*T} \, \|\nabla \phi\|_4^8 \, \|\mathbf{u}_t\|_2^2,\notag\\
|I_9| &\leq \big| \int_{\Omega} (\sqrt{t} \, \rho \, \mu \, \nabla \phi_t) \,\cdot (\sqrt{t} \, \mathbf{u}_t) \, dx \big| \notag\\&\leq C_{\rho*} \, \|\sqrt{\rho t} \, \mathbf{u}_t\|_2 \, \|\nabla \sqrt{t} \, \phi_t\|_2 \, \|\mu\|_{\infty} \\ \notag
&\leq C_{\rho*} \, \|\sqrt{\rho t} \, \mathbf{u}_t\|_2^2 + C\|\mu\|_{\infty}^2 \, \|\nabla \sqrt{t} \, \phi_t\|_2^2 .
\end{align}
For \( I_{10} \), thanks to \( \rho_t = - \mathbf{u} \cdot \nabla \rho \), we deduce that
\begin{equation}
\begin{split}\label{3.1444}
|I_{10}| &\leq \big| \int_{\Omega} \sqrt{t} \, \text{div} \, (\rho \, \mathbf{u}) \cdot\,\bigl((\phi^3 - \phi) \, \nabla \phi \,\cdot \sqrt{t} \, \mathbf{u}_t\bigr) \, dx \big|
\\&\leq\big| \int_{\Omega} \rho \, \mathbf{u} \cdot \nabla \bigl(t(\phi^3 - \phi) \, \cdot \nabla \phi \, \mathbf{u}_t \bigr) \, dx \big|\\
&\leq \big| 3\int_{\Omega} \rho \, \mathbf{u}\cdot \big| \nabla \phi \big|^2|\phi|^2\cdot \, t \, \mathbf{u}_t dx \big| + \big| \int_{\Omega} \rho \, \mathbf{u} \cdot\big| \nabla \phi \big|^2 \,\cdot t \, \mathbf{u}_t dx \big| \\
&\quad + \big| \int_{\Omega} \rho \, \mathbf{u} \,\cdot (\phi^3 - \phi) \, \nabla^2 \phi \, \cdot t \mathbf{u}_t dx \big| + \big| \int_{\Omega} \rho \, \mathbf{u} \, \cdot (\phi^3 - \phi) \, \nabla \phi \, \cdot\nabla t\, \mathbf{u}_t dx \big|
\\&\triangleq \sum_{i = 1}^{4} I_{10(i)}.
\end{split}
\end{equation}
Next, we turn to the estimates of $I_{10(i)}(i=1,2,3,4)$ in the above inequality \eqref{3.1444}. Thanks to H\"{o}lder's and Young's inequalities, we have
\begin{equation*}
\begin{split}
I_{10(1)} &\leq C_{\rho*} \, \|\sqrt{\rho t} \, \mathbf{u}_t\|_2 \, \|\mathbf{u}\|_{\infty} \, \| \left| \nabla \phi \right|^2 \|_4\| \left|\phi \right|^2 \|_4\\& \leq C_{\rho*T} \|\mathbf{u}\|_{\infty}^2 \, \|\sqrt{\rho t} \, \mathbf{u}_t\|_2^2 + C \, \|\nabla \phi\|_8^2\|\phi\|_8^2,\\
I_{10(2)} &\leq C_{\rho*T} \|\sqrt{\rho_t} \mathbf{u}_t\|_{2} \|\mathbf{u}\|_{\infty} \| \nabla\phi \|_{4}^2\\& \leq C_{\rho*T} \|\mathbf{u}\|_{\infty}^2 \|\sqrt{\rho t} \mathbf{u}_t\|_{2}^2 + C \|\nabla \phi \|_{4}^4 ,\\
I_{10(3)} &\leq C_{\rho*T} \|\sqrt{\rho t} \mathbf{u}_t\|_{2} \|\nabla^2 \phi\|_{4} \left( \||\phi|^3\|_{4} + \|\phi\|_{4} \right) \|\mathbf{u}\|_{\infty}\\& \leq C_{\rho*T}\|\mathbf{u}\|_{\infty}^2 \|\sqrt{\rho t} \mathbf{u}_t\|_{2}^2 + C \|\nabla^2 \phi\|_{4}^2 \left( \|\phi\|_{12}^3 + \|\phi\|_{4} \right)^2,\\
I_{10(4)} &\leq C_{\rho*T} \|\nabla\sqrt{t} \mathbf{u}_t\|_{2} \|u\|_{\infty} \left( \||\phi|^3\|_{4} + \|\nabla\phi\|_{4} \right) \|\nabla\phi\|_{4} \\ &\leq \varepsilon\|\nabla\sqrt{t} \mathbf{u}_t\|_{2}^2 + C_{\rho*T} \|\mathbf{u}\|_{\infty}^2 \left( \|\phi\|_{12}^3 + \|\phi\|_{4} \right)^2 \|\nabla\phi\|_{4}^2 .
\end{split}
\end{equation*}
For  $I_{11}$  and $I_{12}$, according to $\Phi_0(s) = \frac{1}{4}(s^2 - 1)^2 \quad \forall s \in \mathbb{R}$, we have
\begin{equation}
\begin{split}
|I_{11}| &\leq \big| -\int_{\Omega} \sqrt{t}\rho(3\phi^2-1) \phi_t \nabla\phi\cdot \sqrt{t} \mathbf{u}_t \, dx \big| \\
&\leq C_{\rho*T} \|\sqrt{\rho t} \mathbf{u}_t\|_{2} (\|\phi\|_{\infty}^2+1) \| \phi_t\|_{4} \|\nabla\phi\|_{4} \\
&\leq  C_{\rho*T}(\|\phi\|_{\infty}^2+1)^2 \|\nabla\phi\|_{4}^2 \| \phi_t\|_{4}^2 + C \|\sqrt{\rho t} \mathbf{u}_t\|_{2}^2 ,\\
|I_{12}| &\leq \big| -\int_{\Omega} \sqrt{t} \rho(\phi^3 - \phi) \nabla\phi_t \cdot\sqrt{t} \mathbf{u}_t \, dx \big| \\&\leq C_{\rho*T} \|\nabla\phi_t\|_{2} \|\sqrt{\rho t} \mathbf{u}_t\|_{2} \big( \|\phi\|_{\infty}^3 + \|\phi\|_{\infty} \big) \\
&\leq C_{\rho*T} \|\nabla\phi_t\|_{2}^2 \|\sqrt{\rho t} \mathbf{u}_t\|_{2}^2 + C\big( \|\phi\|_{\infty}^3 + \|\phi\|_{\infty} \big)^2.
\end{split}
\end{equation}
For $I_{13}$, we conclude from  $\dot{H}^1\hookrightarrow L^6(\Omega)$, that
\begin{equation}
\begin{split}
|I_{13}| &\leq \big|-\int_{\Omega}\sqrt{t} \mathbf{b}_t \cdot \nabla \mathbf{b} \cdot \sqrt{t} \mathbf{u}_t dx\big| \\&\leq \| \nabla \mathbf{b} \|_3 \| \sqrt{t} \mathbf{u}_t \|_6 \| \sqrt{t} \mathbf{b}_t \|_2\\
&\leq \| \nabla \mathbf{b} \|_3 \| \sqrt{t} \mathbf{b}_t \|_2 \| \sqrt{t} \nabla \mathbf{u}_t \|_2
\\& \leq \varepsilon \| \sqrt{t} \nabla \mathbf{u}_t \|_2^2 + C \| \nabla \mathbf{b} \|_3^2 \| \sqrt{t} \mathbf{b}_t \|_2^2.
\end{split}
\end{equation}
For \( I_{14}\),  according to H\"{o}lder's and Young's inequalities,  we have
\begin{equation}
\begin{split}
|I_{14}| &\leq \big|-\int_{\Omega}\sqrt{t} \mathbf{b} \cdot \nabla \mathbf{b}_t \cdot \sqrt{t} \mathbf{u}_t dx\big| \\&\leq  C_{\rho_*}\| \sqrt{\rho t}\mathbf{u}_t \|_2 \| \mathbf{b} \|_\infty \| \nabla\sqrt{t} \mathbf{b}_t \|_2\\
&\leq C_{\rho_*}\| \| \mathbf{b} \|_\infty^2\| \sqrt{\rho t}\mathbf{u}_t \|_2^2 + \varepsilon \| \sqrt{t} \nabla \mathbf{b}_t \|_2^2 .
\end{split}
\end{equation}
For $H_2$ and $H_4$ , we conclude from  $\dot{H}^1\hookrightarrow L^6(\Omega)$, that
\begin{equation}
\begin{split}
|H_2 |&\leq \big|-\int_{\Omega}\sqrt{t} \mathbf{u}_t \cdot \nabla \mathbf{b} \cdot \sqrt{t} \mathbf{b}_t dx\big| \\&\leq \| \nabla \mathbf{b} \|_3 \| \sqrt{t} \mathbf{u}_t \|_6 \| \sqrt{t} \mathbf{b}_t \|_2\\
&\leq \| \nabla \mathbf{b} \|_3 \| \sqrt{t} \mathbf{b}_t \|_2 \| \sqrt{t} \nabla \mathbf{u}_t \|_2 \\&\leq \varepsilon \| \sqrt{t} \nabla \mathbf{u}_t \|_2^2 + C \| \nabla \mathbf{b} \|_3^2 \| \sqrt{t} \mathbf{b}_t \|_2^2,\\
|H_4| &\leq \big|\int_{\Omega}\sqrt{t} \mathbf{b}_t \cdot \nabla \mathbf{u} \cdot \sqrt{t} \mathbf{b}_t dx\big| \\&\leq \| \nabla \mathbf{u} \|_3 \| \sqrt{t} \mathbf{b}_t \|_6 \| \sqrt{t} \mathbf{b}_t \|_2\\
&\leq \| \nabla \mathbf{u} \|_3 \| \sqrt{t} \mathbf{b}_t \|_2 \| \sqrt{t} \nabla \mathbf{b}_t \|_2\\& \leq \varepsilon \| \sqrt{t} \nabla \mathbf{b}_t \|_2^2 + C \| \nabla \mathbf{u} \|_3^2 \| \sqrt{t} \mathbf{b}_t \|_2^2.
\end{split}
\end{equation}
For \( H_3\) and \( H_5\),  according to H\"{o}lder's and Young's inequalities,  we have
\begin{equation}
\begin{split}
|H_3 |&\leq \big|-\int_{\Omega}\sqrt{t} \mathbf{u} \cdot \nabla \mathbf{b}_t \cdot \sqrt{t} \mathbf{b}_t dx\big|\\& \leq \| \sqrt{t}\mathbf{b}_t \|_2 \| \mathbf{u} \|_\infty \| \nabla\sqrt{t} \mathbf{b}_t \|_2\\
&\leq C\| \| \mathbf{u} \|_\infty^2\| \sqrt{t}\mathbf{b}_t \|_2^2 + \varepsilon \| \sqrt{t} \nabla \mathbf{b}_t \|_2^2,\\
|H_5| &\leq \big|\int_{\Omega}\sqrt{t} \mathbf{b} \cdot \nabla \mathbf{u}_t \cdot \sqrt{t} \mathbf{b}_t dx\big|\\& \leq \| \sqrt{t}\mathbf{b}_t \|_2 \| \mathbf{b} \|_\infty \| \nabla\sqrt{t} \mathbf{u}_t \|_2\\
&\leq C\| \| \mathbf{b} \|_\infty^2\| \sqrt{t}\mathbf{b}_t \|_2^2 + \varepsilon \| \sqrt{t} \nabla \mathbf{u}_t \|_2^2.
\end{split}
\end{equation}
For \( K_i\) and \( L_i\), we can refer to \cite{J}.
Summing  \eqref{eq-31}, \eqref{eq-43}, \eqref{eq-32}, and \eqref{eq-33} together, and  combining with  estimates of $ I_i$, $ H_i$, $ K_i$ and $ L_i$, we finally conclude that
\begin{equation}\label{3.30}
\begin{split}
\frac{d}{dt}& \left( \|\sqrt{\rho t}\, \mathbf{u}_t\|_2^2 +\|\sqrt{ t}\, \mathbf{b}_t\|_2^2 + \|\nabla\sqrt{t}\ \phi_t\|_2^2 + \|\sqrt{\rho_t}\, \phi\, \phi_t\|_2^2+ \|\sqrt{\rho t}\phi_t\|_2^2\right)\\
&\quad+ \int_0^t \rho\tau \|\mu_t\|_2^2 \, dt
+ \int_0^t \nu(\phi)\, \tau \|\nabla \mathbf{u}_t\|_2^2 \, dt+ \int_0^t \tau \|\nabla \mathbf{b}_t\|_2^2 \, dt
+ \int_0^t \|\nabla\sqrt{\tau}\, \mu_t\|_2^2 \, dt \\
&\leq h_1(t) \left( 1 + \|\sqrt{\rho t}\, \mathbf{u}_t\|_2^2 + \|\sqrt{t}\, \mathbf{b}_t\|_2^2 + \|\nabla\sqrt{t}\, \phi_t\|_2^2 + \|\sqrt{\rho_t}\, \phi\, \phi_t\|_2^2+\|\sqrt{\rho t}\phi_t\|_2^2 \right),
\end{split}
\end{equation}
where
\begin{align*}
&h_1(t)=
(1 + C + C_{\rho^*} + C_{\rho^*\rho_*T} + C_T)
\Big(\big( \|\phi\|_{12}^3 + \|\phi\|_4 \big)^2
\big( \|\nabla^2 \phi\|_4^2 + \|\nabla \phi\|_4^2 + \|\mathbf{u}\|_4^2 \big) \\
&+ \big( \|\nabla \phi\|_6 + \|\phi\|_\infty^2 \|\nabla\phi\|_6 \big)^2
\big( \|\mathbf{u}\|_6^2 + \|\nabla\phi\|_4^2  + \|\mathbf{u}\|_4^2 \big) + \|\mathbf{u}\|_4^2 \big( \|\nabla^2 \phi\|_6 + \|\nabla \phi\|_{12}^2 \|\phi\|_\infty + \|\nabla^2 \phi\|_6 \|\phi\|_\infty^2 \big)^2 \\
&+ \|\nabla \mathbf{u}\|_2^2 + \| \mu\|_4^2 \|\nabla \phi\|_4^2 + \|\nabla \phi\|_2^2 + \|\phi\|_\infty^4 + \|\nabla \mu\|_2^2 + \|\nabla \phi\|_\infty^2
 + \|\nabla^2 \phi\|_2^2 + \|\nabla \phi\|_4^2 + \|\phi\|_8^4 \|\nabla\phi\|_4^2 \\
&+ \|\nabla \phi\|_4^8 + \big( \|\phi\|_\infty^2 + 1 \big)^2 \|\nabla \phi\|_4^2 + \|\mathbf{u}\|_4^4 + \|\mathbf{u}\|_6^4 + \|\phi\|_6^4 + \|\nabla \mathbf{u}\|_2^2  + \|\mathbf{u}\|_4 \|\nabla \phi\|_4 \|\phi\|_6 + \|\nabla \mu\|_2^2 + \|\nabla \phi\|_4^2\\& + \|\phi\|_\infty^4
+ \|\nabla \phi\|_4^2 + \|\mathbf{u}\|_6^2 + \|\mathbf{u}\|_2^2 + \|\nabla \mathbf{u}\|_2^2 + \|\nabla \mathbf{u}\|_2^{\frac{7}{2}} \\
&+ \|\mathbf{u}\|_4^2 \big( \|\nabla \phi\|_4^2 + \|\nabla^2 \phi\|_4^2 + \|\mu\|_4^2 \big)
+ \|\nabla^2 \phi\|_4^2 \|\mu\|_4^2 + \|\nabla \phi\|_8^2 \|\phi\|_8^2
+ \big( \|\phi\|_\infty^3 + \|\phi\|_\infty\big)^2 \Bigr)\\
&\times\Big( \|\mathbf{u}\|_{\infty}^2 +\|\mathbf{b}\|_{\infty}^2 + \|\mathbf{u}\|_{\infty}^4 + \|\mu\|_{\infty}^2 + \|\phi_t\|_2^2 + \| \phi_t\|_4^2 + \| \phi_t\|_6^2 + \|\phi_t\|_2 + \|\mathbf{u}_t\|_2^2 + \|\nabla \phi_t\|_2^2 \\
&+  \|\nabla^2 \mathbf{u}\|_2^2 + \|\nabla \mathbf{u}\|_3^2 +\|\nabla \mathbf{b}\|_3^2 + \|\nabla \mu\|_4^2 + \|\nabla \mu\|_6^2 + \|\nabla^2 \mu\|_6^2 + \|\nabla^2 \mu\|_4^2 + \|\nabla \mu\|_{w^{1,6}}^2 \Big)
\end{align*}
only depends on, \( \rho^* \), \( \rho_* \), $T$ and the initial value.
From  Theorem \ref{1.1} and the  3-D Gagliardo-Nirenberg interpolation inequality: \( \| u \|_{\infty}^4 \leq \|\nabla u \|_2^2 \| \nabla^2 u \|_2^2 \), we conclude that $ h_1(t)\in L^1_{loc}(\mathbb{R}^+) $. This along with \eqref{3.30} and Gronwall's inequality yields that \eqref{3D}  holds for \( t \geq 0 \).
\end{proof}
\begin{lemma}
Assume \(d = 2\), and that that a strong solution $(\rho, \mathbf{u}, \mathbf{b}, P,\phi, \mu)$ to the initial--boundary value problem \eqref{1.1}-\eqref{1.2MHD}. Then for all \(0\leq t\leq T\), we have
\begin{equation}\label{2D}
\begin{split}
\|\sqrt{\rho t}\mathbf{u}_t\|_2^2&+\|\sqrt{t}\mathbf{b}_t\|_2^2+\|\sqrt{\rho t}\phi_t\phi\|_2^2+\|\sqrt{ t}\nabla\phi_t\|_2^2+\|\sqrt{\rho t}\phi_t\|_2^2+\int_{0}^{t}\nu(\phi)\tau\|\nabla \mathbf{u}_t\|_2^2d\tau \\
&+\int_{0}^{t}\tau\|\nabla\mu_t\|_2^2d\tau+\int_{0}^{t}\tau\|\nabla\mathbf{b}_t\|_2^2d\tau
+\int_{0}^{t}\tau\rho\|\mu_t\|_2^2d\tau
\leq\exp\Big(\int_{0}^{t}h_2(\tau)d\tau\Big)-1,
\end{split}
\end{equation}
where
 \begin{align*}
&h_2(t)=(1 + C + C_{\rho^*} + C_{\rho^*\rho_*T} + C_T)
\Big(\big( \|\phi\|_{12}^3 + \|\phi\|_4 \big)^2
\big( \|\nabla^2 \phi\|_4^2 + \|\nabla \phi\|_4^2 + \|\mathbf{u}\|_4^2 \big) \\
&+ \big( \|\nabla \phi\|_6 + \|\phi\|_\infty^2 \|\nabla\phi\|_6 \big)^2
\big( \|\mathbf{u}\|_6^2 + \|\nabla\phi\|_4^2  + \|\mathbf{u}\|_4^2 \big) + \|\mathbf{u}\|_4^2 \big( \|\nabla^2 \phi\|_6 + \|\nabla \phi\|_{12}^2 \|\phi\|_\infty + \|\nabla^2 \phi\|_6 \|\phi\|_\infty^2 \big)^2 \\
&+ \|\nabla \mathbf{u}\|_2^2 + \| \mu\|_4^2 \|\nabla \phi\|_4^2 + \|\nabla \phi\|_2^2 + \|\phi\|_\infty^4 + \|\nabla \mu\|_2^2 + \|\nabla \phi\|_\infty^2
 + \|\nabla^2 \phi\|_2^2 + \|\nabla \phi\|_4^2 + \|\phi\|_8^4 \|\nabla\phi\|_4^2 \\
&+ \|\nabla \phi\|_4^8 + \big( \|\phi\|_\infty^2 + 1 \big)^2 \|\nabla \phi\|_4^2 + \|\mathbf{u}\|_4^4 + \|\mathbf{u}\|_6^4 + \|\phi\|_6^4 + \|\nabla \mathbf{u}\|_2^2  + \|\mathbf{u}\|_4 \|\nabla \phi\|_4 \|\phi\|_6 + \|\nabla \mu\|_2^2 + \|\nabla \phi\|_4^2 \\
&+ \|\phi\|_\infty^4 + \|\nabla \phi\|_4^2 + \|\mathbf{u}\|_6^2 + \|\mathbf{u}\|_2^2 + \|\nabla \mathbf{u}\|_2^2
+ \|\mathbf{u}\|_4^2 \big( \|\nabla \phi\|_4^2 + \|\nabla^2 \phi\|_4^2 + \|\mu\|_4^2 \big)
\\&+ \|\nabla^2 \phi\|_4^2 \|\mu\|_4^2 + \|\nabla \phi\|_8^2 \|\phi\|_8^2
+ \big( \|\phi\|_\infty^3 + \|\phi\|_\infty\big)^2 \Big)\times\Big( \|\mathbf{u}\|_{\infty}^2 + \|\mathbf{u}\|_{\infty}^4 + \|\mu\|_{\infty}^2 + \|\phi_t\|_2^2 \\
&+ \| \phi_t\|_4^2 + \| \phi_t\|_6^2 + \|\phi_t\|_2 + \|\mathbf{u}_t\|_2^2 + \|\nabla \phi_t\|_2^2 +  \|\nabla^2 \mathbf{u}\|_2^2 + \|\nabla \mathbf{u}\|_3^2 + \|\nabla \mu\|_4^2 + \|\nabla \mu\|_6^2\\
& + \|\nabla^2 \mu\|_6^2 + \|\nabla^2 \mu\|_4^2 + \|\nabla \mu\|_{w^{1,6}}^2 \Big)
\end{align*}
is $ h_2(t) \in L^1_{loc}(\mathbb{R}^+)$  only depends on, \( \rho^* \), \( \rho_* \), $T$ and the initial value.
\end{lemma}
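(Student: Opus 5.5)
The plan is to repeat the proof of Lemma \ref{lemma3.2} line by line. We differentiate \eqref{1.1}$_2$, \eqref{1.1}$_3$, \eqref{1.1}$_4$ and \eqref{1.1}$_5$ in time and multiply by $\sqrt t$. We then test with $\sqrt t\,\mathbf{u}_t$, $\sqrt t\,\mathbf{b}_t$, $\sqrt t\,\mu_t$ and $\sqrt t\,\phi_t$, respectively. This reproduces exactly the identities \eqref{eq-31}, \eqref{eq-43}, \eqref{eq-32} and \eqref{eq-33}, with the right-hand sides $I_1,\dots,I_{14}$, $H_1,\dots,H_5$, $K_1,\dots,K_{10}$ and $L_1,\dots,L_8$. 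These identities are dimension independent. They rely only on integration by parts, $\operatorname{div}\mathbf{u}=0$, the boundary conditions, and the transport relation $\rho_t=-\mathbf{u}\cdot\nabla\rho$, which moves the derivative off the rough density. The only change is in the estimation step. There the three-dimensional embedding $\dot H^1\hookrightarrow L^6$ and the interpolation \eqref{eq:3.6} are replaced by their two-dimensional counterparts: \eqref{eq:3.3}, \eqref{eq:3.5}, and the fact that $H^1(\Omega)\hookrightarrow L^p(\Omega)$ for every $p<\infty$ when $d=2$.

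Term by term, every factor of the form $\|\sqrt t\,\mathbf{u}_t\|_p$, $\|\sqrt t\,\mathbf{b}_t\|_p$ or $\|\sqrt t\,\phi_t\|_p$ is bounded by Ladyzhenskaya's inequality. For example,
$\|\sqrt t\,\mathbf{u}_t\|_4\le C\|\sqrt t\,\mathbf{u}_t\|_2^{1/2}\|\nabla\sqrt t\,\mathbf{u}_t\|_2^{1/2}$.
Wherever $\|\sqrt t\,\mathbf{u}_t\|_6$ appeared in 3D (in $I_4$, $I_{13}$, $H_2$ and $H_4$), we use Hölder with exponents $(4,2,4)$ instead of $(3,2,6)$. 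Ladyzhenskaya's inequality then gives a bound of the form
\[
\varepsilon\|\sqrt t\nabla \mathbf{u}_t\|_2^2+C\|\nabla \mathbf{u}\|_4^{2}\,\|\sqrt{\rho t}\,\mathbf{u}_t\|_2^2 ,
\]
using $\rho\ge\rho_*$. The coefficient $\|\nabla \mathbf{u}\|_4^2\le C\|\nabla\mathbf{u}\|_2\|\nabla^2\mathbf{u}\|_2$ is integrable in time. In $I_3$, the term $I_{31}$ is estimated with $L^4$ norms in place of $L^6$ and $L^{24/7}$, which produces a weight $\|\nabla\mathbf{u}\|_2^2\|\nabla^2\mathbf{u}\|_2^2$ instead of $\|\nabla\mathbf{u}\|_2^{7/2}\|\nabla^2\mathbf{u}\|_2^2$. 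This explains why the $\|\nabla\mathbf{u}\|_2^{7/2}$ entry disappears from $h_2$. The magnetic terms $I_{14}$, $H_3$ and $H_5$ need $\|\mathbf{u}\|_\infty$ and $\|\mathbf{b}\|_\infty$. By \eqref{eq:3.5} their squares are bounded by $C\|\nabla\mathbf{u}\|_2\|\nabla^2\mathbf{u}\|_2$ and $C\|\nabla\mathbf{b}\|_2\|\nabla^2\mathbf{b}\|_2$, both in $L^1(0,T)$. The $K_i$ and $L_i$ terms are handled as in \cite{J}, again with the two-dimensional embeddings. Absorbing every $\varepsilon$-term into the dissipation $\nu_*\tau\|\nabla\mathbf{u}_t\|_2^2+\tau\|\nabla\mathbf{b}_t\|_2^2+\tau\|\nabla\mu_t\|_2^2+\tau\rho\|\mu_t\|_2^2$ then yields the analogue of \eqref{3.30}, with $h_2$ in place of $h_1$.

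It remains to check that $h_2\in L^1(0,T)$ for every $T>0$, using only the global bounds of Theorem \ref{1.3}, namely $H+\int_0^tF\,d\tau\le C$ together with \eqref{3.7AAA} and \eqref{4.24AA}. The $\phi$-norms $\|\phi\|_{12}$, $\|\phi\|_\infty$ and $\|\nabla\phi\|_{p}$ are controlled in $L^\infty_t$ through $\phi\in L^\infty(H^2)$; in 2D this embeds into $W^{1,p}$ for all $p<\infty$ and into $L^\infty$. Since $\mu\in L^\infty(H^1)$, the norms $\|\mu\|_4$ and $\|\mu\|_\infty$ are controlled via \eqref{eq:3.5} together with $\mu\in L^2(W^{2,q})$. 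The second factor of $h_2$ contains $\|\nabla^2\mu\|_6^2$ and $\|\nabla\mu\|_{W^{1,6}}^2$, which are integrable because $\mu\in L^2(0,T;W^{2,q})$ with $q\ge 6$. It also contains $\|\mathbf{u}_t\|_2^2$, $\|\nabla\phi_t\|_2^2$ and $\|\nabla^2\mathbf{u}\|_2^2$, all integrable by the definition of $F$. Finally, $\|\mathbf{u}\|_\infty^4\le C\|\nabla\mathbf{u}\|_2^2\|\nabla^2\mathbf{u}\|_2^2$ is integrable since $\nabla\mathbf{u}\in L^\infty(L^2)$. Gronwall's inequality applied to the differential inequality, with vanishing initial datum due to the factor $t$, gives \eqref{2D}.

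The main obstacle is the terms $I_{10(i)}$ and $I_{31}$. There $\rho_t$ is traded for $\operatorname{div}(\rho\mathbf{u})$ and integrated by parts, which puts an extra derivative on products of $\phi$, $\nabla\phi$, $\mathbf{u}$ and $\mathbf{u}_t$. The exponents must be chosen so that the coefficient of $\|\sqrt{\rho t}\,\mathbf{u}_t\|_2^2$ stays integrable while all gradients of $\mathbf{u}_t$ are absorbed. In 2D I would first try Hölder triples with $\|\sqrt t\,\mathbf{u}_t\|_4$ and $\|\nabla^2\phi\|_4$, which are both available. If a power of $\|\nabla^2\mathbf{u}\|_2$ larger than $2$ appears, I would fall back on the logarithmic inequality of Lemma \ref{log}.
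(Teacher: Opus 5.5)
Your proposal is correct and follows essentially the paper's route: reuse the dimension-independent identities \eqref{eq-31}, \eqref{eq-43}, \eqref{eq-32}, \eqref{eq-33}, redo with Ladyzhenskaya and two-dimensional Sobolev bounds only the estimates that relied on three-dimensional interpolation (the paper redoes $I_{31}$, $I_4$, $I_6$, $I_8$, $I_{13}$, $H_2$, $H_4$, $K_{43}$, $K_9$), check $h_2\in L^1(0,T)$ for every $T$ from the global bounds of Theorem \ref{1.3}, and conclude by Gronwall. Two bookkeeping remarks: $\|\nabla\phi\|_\infty$, $\|\nabla^2\phi\|_4$ and $\|\nabla^2\phi\|_6$ are controlled by the $L^\infty(0,T;W^{2,q})$ bound of Theorem \ref{1.3}, not by $H^2$ alone; and, as in the paper's own estimates, your magnetic terms produce integrable weights such as $\|\mathbf{b}\|_\infty^2$ and $\|\nabla\mathbf{b}\|_4^2$ that are missing from the displayed $h_2$, which should be enlarged accordingly.
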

\begin{proof}
Compared with the proof of the 3-D case, we here only show some different parts for \(I_{31}$, $I_4$, $I_6$, $I_8 \), \( K_{43} \) and \( K_{9} \) in what follows. Combining Holder's inequality and Sobolev embedding, we have
\begin{align*}
|I_{31}| &\leq \big|\int_{\mathbb{T}^2} \sqrt{\rho t} |\mathbf{u}| \|\nabla \mathbf{u}\| \|\nabla \mathbf{u}\| \sqrt{\rho t} \mathbf{u}_t dx\big| \\&\leq \|\mathbf{u}\|_{\infty}^2 \| \sqrt{\rho t} \mathbf{u}_t \|_2^2 + C _{T \rho^*} \| \nabla \mathbf{u} \|_4^4,\\
&\leq \|\mathbf{u}\|_{\infty}^2 \| \sqrt{\rho t} \mathbf{u}_t \|_2^2 + C_{ T \rho^*} \| \nabla \mathbf{u} \|_2^2\| \mathbf{u} \|_2^2,\\
|I_4| &\leq C_{\rho*T}\| \nabla \mathbf{u} \|_2 \| \sqrt{t} \mathbf{u}_t \|_4^2
\\& \leq C_{\rho*\rho_*T} \| \nabla \mathbf{u} \|_2 \| \sqrt{\rho t} \mathbf{u}_t \|_2 \| \sqrt{t} \nabla \mathbf{u}_t \|_2 \\ &\leq \varepsilon \| \sqrt{t} \nabla \mathbf{u}_t \|_2^2 + C_{\rho*\rho_*T} \| \nabla \mathbf{u} \|_2^2 \| \sqrt{\rho t} \mathbf{u}_t \|_2^2,\\
|I_6| &\leq \big|- \int _{\Omega}\nu'(\phi) \phi_t D \mathbf{u} \cdot \nabla t\mathbf{u}_t dx \big| \\&\leq C \| \sqrt{t} \nabla \mathbf{u}_t \|_2 \|\sqrt t \phi_t \|_4 \| D \mathbf{u} \|_4\\
&\leq \varepsilon \| \sqrt{t} \nabla \mathbf{u}_t \|_2^2 + C_T\| \nabla \mathbf{u} \|_2\| \nabla^2 \mathbf{u} \|_2 \|\nabla\sqrt t\phi_t \|_2\|\phi_t \|_2\\
&\leq \varepsilon \| \sqrt{t} \nabla \mathbf{u}_t \|_2^2 + C_T\| \nabla \mathbf{u} \|_2^2\| \nabla^2 \mathbf{u} \|_2^2+C \|\nabla\sqrt t\phi_t \|_2^2\|\phi_t \|_2^2\\
|I_8| &\leq \big| \int_{\Omega} ( \|\sqrt{t}\rho \, \mu_t \, \nabla \phi) \, (\sqrt{t} \, \mathbf{u}_t) \, dx \big|\\& \leq C_{\rho*} \, \|\sqrt{\rho t} \, \mu_t\|_2 \, \|\nabla \, \phi\|_4 \, \|\sqrt{t}\mathbf{u}_t\|_4\\
&\leq C_{\rho*T} \, \|\sqrt{\rho t} \, \mu_t\|_2 \, \|\nabla \, \phi\|_4 \, \|\nabla\sqrt{t}u_t\|_2^\frac{1}{2}\|\mathbf{u}_t\|_2^\frac{1}{2}\\& \leq \varepsilon\|\nabla \sqrt{t} \, \mathbf{u}_t\|_2^2 + C_{\rho*T} \, \|\nabla \phi\|_4^2 \, \|\mathbf{u}_t\|_2^2+\varepsilon\|\sqrt{\rho t} \, \mu_t\|_2^2,\\
|I_{13}| &\leq \big|\int_{\Omega} ( \|\sqrt{t}\mathbf{b}_t \cdot\nabla\mathbf{b}) \cdot (\sqrt{t} \, \mathbf{u}_t) \, dx \big|\\& \leq  \|\nabla\mathbf{b}_2 \, \|\sqrt{t}\mathbf{u}_t\|_4\|\sqrt{t}\mathbf{b}_t\|_4\\
&\leq C\|\sqrt{t}\mathbf{u}_t\|_2^{\frac{1}{2}}\|\sqrt{t}\mathbf{u}_t\|_{H^1}^{\frac{1}{2}} \|\sqrt{t}\mathbf{b}_t\|_2^{\frac{1}{2}}\|\sqrt{t}\mathbf{b}_t\|_{H^1}^{\frac{1}{2}} \\ &\leq C\|\sqrt{t}\mathbf{u}_t\|_2\|\sqrt{t}\mathbf{u}_t\|_{H^1}
+C\|\sqrt{t}\mathbf{b}_t\|_2\|\sqrt{t}\mathbf{b}_t\|_{H^1}\\
&\leq C\|\sqrt{t}\mathbf{u}_t\|_2^2+\varepsilon\|\sqrt{t}\mathbf{u}_t\|_{H^1}^2
+C\|\sqrt{t}\mathbf{b}_t\|_2^2+\varepsilon\|\sqrt{t}\mathbf{b}_t\|_{H^1}^2\\
&\leq C\|\sqrt{t}\mathbf{u}_t\|_2^2+\varepsilon\|\sqrt{t}\mathbf{u}_t\|_2^2
+C\|\sqrt{t}\mathbf{b}_t\|_2^2+\varepsilon\|\sqrt{t}\mathbf{b}_t\|_2^2,\\
|H_2 |&\leq \| \nabla \mathbf{b} \|_4 \| \sqrt{t} \mathbf{u}_t \|_4 \| \sqrt{t} \mathbf{b}_t \|_2\\&\leq \| \nabla \mathbf{b} \|_4^2\| \sqrt{t} \mathbf{b}_t \|_2^2+ \| \sqrt{t} \mathbf{u}_t \|_4^2\\
& \leq \| \nabla \mathbf{b} \|_4^2\| \sqrt{t}\mathbf{b}_t \|_2^2 + C \| \sqrt{t} \mathbf{u}_t \|_4^2\\&\leq\| \nabla \mathbf{b} \|_4^2\| \sqrt{t}\mathbf{b}_t \|_2^2 + C \| \sqrt{t} \mathbf{u}_t \|_2 \| \sqrt{t} \mathbf{u}_t \|_H^1\\
&\leq\| \nabla \mathbf{b} \|_4^2\| \sqrt{t}\mathbf{b}_t \|_2^2 + C \| \sqrt{t} \mathbf{u}_t \|_2^2+\varepsilon \| \sqrt{t} \mathbf{u}_t \|_2^2,\\
|H_4|&\leq \big|\int_{\Omega} ( \sqrt{t}\mathbf{b}_t \cdot\nabla\mathbf{u}) \cdot (\sqrt{t} \, \mathbf{b}_t) \, dx \big|\\& \leq  \|\nabla\mathbf{u}\|_2 \, \|\sqrt{t}\mathbf{b}_t\|_4^2
\\&\leq C \|\nabla\mathbf{u}\|_2^2\|\sqrt{t}\mathbf{b}_t\|_2\|\sqrt{t}\mathbf{b}_t\|_{H^1}\\
&\leq C \|\nabla\mathbf{u}\|_2^2\|\sqrt{t}\mathbf{b}_t\|_2^2
+\varepsilon\|\sqrt{t}\mathbf{b}_t\|_{H^1}^2
\\&\leq C\|\nabla\mathbf{u}_2^2\|\sqrt{t}\mathbf{b}_t\|_2^2
+\varepsilon\|\nabla\sqrt{t}\mathbf{b}_t\|_2^2,\\
|K_{43}|&=3\int_{\Omega}|\sqrt{t}\phi_t|^2\rho\phi\phi_t dx
\\&\leq C_{\rho*T}\|\sqrt{\rho t}\phi_t\phi\|_2\|\phi_t\|_4\|\phi_t\|_4\\
&\leq C_{\rho*T}\|\phi_t\|_4^2\|\sqrt{\rho t}\phi_t\phi\|_2^2+ C\|\phi_t\|_4^2,\\
|K_9|& \leq C_{\rho^* T} \|\sqrt{\rho_t} \mathbf{u}_t\|_2 \|\sqrt{t} \mu_t\|_4 \|\nabla \phi\|_4 \\&\leq C_{\rho^* T} \|\sqrt{\rho_t} \mathbf{u}_t\|_2 \|\sqrt{\rho t} \mu_t\|_2^\frac{1}{2} \|\nabla\sqrt{ t} \mu_t\|_2^\frac{1}{2} \|\nabla \phi\|_4\\
&\leq C_{\rho^* T} \|\nabla \phi\|_4^2 \|\sqrt{\rho_t} u_t\|_2^2 + \varepsilon \|\nabla\sqrt{t} \mu_t\|_2^2+\varepsilon \|\sqrt{\rho t} \mu_t\|_2^2.
\end{align*}
Then summing \eqref{eq-31}, \eqref{eq-32}, and \eqref{eq-33} together, and combining with   estimates of $I_i$, $H_i$, $ K_i$ and $L_i$, we deduce that
\begin{equation}\label{3.38}
\begin{split}
\frac{d}{dt} &\left( \|\sqrt{\rho t}\, \mathbf{u}_t\|_2^2 +\|\sqrt{t}\, \mathbf{b}_t\|_2^2 + \|\nabla\sqrt{t}\ \phi_t\|_2^2 + \|\sqrt{\rho_t}\, \phi\, \phi_t\|_2^2+\|\sqrt{\rho t}\phi_t\|_2^2 \right)
+ \int_0^t \rho\tau \|\mu_t\|_2^2 \, dt
\\&+ \int_0^t \nu(\phi)\, \tau \|\nabla \mathbf{u}_t\|_2^2 \, dt+ \int_0^t \|\nabla\sqrt{\tau} \mathbf{b}_t\|_2^2 \, dt
+ \int_0^t \|\nabla\sqrt{\tau}\, \mu_t\|_2^2 \, dt\\
&\leq h_2(t) \left( 1 + \|\sqrt{\rho t}\, \mathbf{u}_t\|_2^2 +\|\sqrt{ t}\, \mathbf{b}_t\|_2^2 + \|\nabla\sqrt{t}\, \phi_t\|_2^2 + \|\sqrt{\rho t}\, \phi\, \phi_t\|_2^2+\|\sqrt{\rho t}\phi_t\|_2^2 \right),
\end{split}
\end{equation}
where
\begin{align*}
&h_2(t)=(1 + C + C_{\rho^*} + C_{\rho^*\rho_*T} + C_T)
\Big(\big( \|\phi\|_{12}^3 + \|\phi\|_4 \big)^2
\big( \|\nabla^2 \phi\|_4^2 + \|\nabla \phi\|_4^2 + \|\mathbf{u}\|_4^2 \big) \\
&+ \big( \|\nabla \phi\|_6 + \|\phi\|_\infty^2 \|\nabla\phi\|_6 \big)^2
\big( \|\mathbf{u}\|_6^2 + \|\nabla\phi\|_4^2  + \|\mathbf{u}\|_4^2 \big) + \|\mathbf{u}\|_4^2 \big( \|\nabla^2 \phi\|_6 + \|\nabla \phi\|_{12}^2 \|\phi\|_\infty + \|\nabla^2 \phi\|_6 \|\phi\|_\infty^2 \big)^2 \\
&+ \|\nabla \mathbf{u}\|_2^2 + \| \mu\|_4^2 \|\nabla \phi\|_4^2 + \|\nabla \phi\|_2^2 + \|\phi\|_\infty^4 + \|\nabla \mu\|_2^2 + \|\nabla \phi\|_\infty^2
 + \|\nabla^2 \phi\|_2^2 + \|\nabla \phi\|_4^2 + \|\phi\|_8^4 \|\nabla\phi\|_4^2 \\
&+ \|\nabla \phi\|_4^8 + \big( \|\phi\|_\infty^2 + 1 \big)^2 \|\nabla \phi\|_4^2 + \|\mathbf{u}\|_4^4 + \|\mathbf{u}\|_6^4 + \|\phi\|_6^4 + \|\nabla \mathbf{u}\|_2^2  + \|\mathbf{u}\|_4 \|\nabla \phi\|_4 \|\phi\|_6 + \|\nabla \mu\|_2^2 + \|\nabla \phi\|_4^2 \\
&+ \|\phi\|_\infty^4 + \|\nabla \phi\|_4^2 + \|\mathbf{u}\|_6^2 + \|\mathbf{u}\|_2^2 + \|\nabla \mathbf{u}\|_2^2
+ \|\mathbf{u}\|_4^2 \big( \|\nabla \phi\|_4^2 + \|\nabla^2 \phi\|_4^2 + \|\mu\|_4^2 \big)
\\&+ \|\nabla^2 \phi\|_4^2 \|\mu\|_4^2 + \|\nabla \phi\|_8^2 \|\phi\|_8^2
+ \big( \|\phi\|_\infty^3 + \|\phi\|_\infty\big)^2 \Big)\times\Big( \|\mathbf{u}\|_{\infty}^2 + \|\mathbf{u}\|_{\infty}^4 + \|\mu\|_{\infty}^2 + \|\phi_t\|_2^2 \\
&+ \| \phi_t\|_4^2 + \| \phi_t\|_6^2 + \|\phi_t\|_2 + \|\mathbf{u}_t\|_2^2 + \|\nabla \phi_t\|_2^2 +  \|\nabla^2 \mathbf{u}\|_2^2 + \|\nabla \mathbf{u}\|_3^2 + \|\nabla \mu\|_4^2 + \|\nabla \mu\|_6^2\\
& + \|\nabla^2 \mu\|_6^2 + \|\nabla^2 \mu\|_4^2 + \|\nabla \mu\|_{w^{1,6}}^2 \Big)
\end{align*}
only depends on, \( \rho^* \), $T$ and the initial value.
Combining with  Theorem \ref{1.1}, and the 2-D Ladyzhenskaya inequality: \( \| \mathbf{u} \|_{\infty}^4 \leq \| \mathbf{u} \|_2^2 \| \nabla^2 \mathbf{u} \|_2^2 \) yields that
$h_2(t) \in L^1_{loc}(\mathbb{R}^+)$. Then applying Gronwall's inequality to \eqref{3.38} implies  that \eqref{2D} holds for \( t \geq 0 \).
\end{proof}

\vskip .2in
\subsection{The estimate  for quantity $\int_{0}^{T}\|(\nabla \mathbf{u}, \nabla \mathbf{b}, \nabla \mu, \nabla\phi)(\tau)\|_{L^{\infty}}d\tau$}
\label{int}
In  this section, our main  goal is to achieve the bound  estimate  for quantity $\int_{0}^{T}\|(\nabla \mathbf{u}, \nabla \mathbf{b}, \nabla \mu, \nabla\phi)(\tau)\|_{L^{\infty}}d\tau$ in terms of the initial data and of $T$ by performing the shift of integrability method. This is given by the following two lemmas in two and three dimensions, respectively.
\begin{lemma}\label{3DDD}
Assume \( d = 3 \), then for all \( T > 0 \), \( p \in [2,\infty] \), we have
\begin{equation}\label{3DD}
\left\lVert (\nabla^2 \sqrt{t} \mathbf{u}, \nabla^2 \sqrt{t} \mathbf{b}, \nabla^2 \sqrt{t}\phi, \nabla^2 \sqrt{t} \mu) \right\rVert_{L^p(0,T;L^r)} + \left\lVert \nabla \sqrt{t} P \right\rVert_{L^p(0,T;L^r)} \leq C_{0,T}, \quad \text{for} \quad 2 \leq r \leq \frac{6p}{3p - 4},
\end{equation}
where \( C_{0,T} \) depends only on \( \rho^* \), \( p \) and the initial value. Furthermore, for \( s \in \left[1,\frac{4}{3}\right) \), then for some \( \theta > 0 \), we have
\begin{equation}
\int_0^T \left( \left\lVert \nabla \mathbf{u} \right\rVert_\infty^s +\left\lVert \nabla \mathbf{b} \right\rVert_\infty^s + \left\lVert \nabla \mu\right\rVert_\infty^s + \left\lVert \nabla\phi \right\rVert_\infty^s \right) dt \leq C_{0,T} T^\theta.\label{eq-s}
\end{equation}
\end{lemma}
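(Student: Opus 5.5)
The plan is the usual shift-of-integrability argument, as in \cite{RB,PZZ}. The time-weighted bounds of Lemma \ref{lemma3.2} are fed into elliptic estimates for each equation, and the resulting bounds are then interpolated with the unweighted bounds of Section 3.

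\textbf{Step 1: elliptic estimates at fixed time.} I view \eqref{1.1}$_2$ as a Stokes system with variable viscosity,
\[
-\mathrm{div}(\nu(\phi)D\mathbf{u})+\nabla P = F_1:=-\rho\mathbf{u}_t-\rho\mathbf{u}\cdot\nabla\mathbf{u}+\mathbf{b}\cdot\nabla\mathbf{b}+\rho\mu\nabla\phi-\rho\nabla\Psi(\phi).
\]
Since $\nu\in W^{1,\infty}$ and $\nabla\phi\in L^\infty(0,T;L^6)$, the $L^r$-Stokes theory gives
\[
\|\nabla^2\mathbf{u}\|_r+\|\nabla P\|_r\le C\big(\|F_1\|_r+\|\nabla\phi\cdot\nabla\mathbf{u}\|_r\big).
\]
For the magnetic field, $-\Delta\mathbf{b}=-\mathbf{b}_t-\mathbf{u}\cdot\nabla\mathbf{b}+\mathbf{b}\cdot\nabla\mathbf{u}$, and Dirichlet theory gives $\|\nabla^2\mathbf{b}\|_r$. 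For the chemical potential, $\Delta\mu=\rho\phi_t+\rho\mathbf{u}\cdot\nabla\phi$ with Neumann data gives $\|\nabla^2\mu\|_r$. Finally $-\Delta\phi=\rho\mu-\rho\Psi'(\phi)$ gives $\|\nabla^2\phi\|_r\lesssim\|\mu\|_r+\|\phi\|_{3r}^3+\|\phi\|_r$. I multiply all of these by $\sqrt t$.

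\textbf{Step 2: the main weighted terms.} The leading contributions are $\sqrt t\,\mathbf{u}_t$, $\sqrt t\,\mathbf{b}_t$ and $\sqrt t\,\phi_t$. By Lemma \ref{lemma3.2} these lie in $L^\infty(0,T;L^2)\cap L^2(0,T;H^1)$. Gagliardo--Nirenberg then gives $L^p(0,T;L^r)$ for every $p\in[2,\infty]$ and every $r$ with
\[
\frac 2p+\frac 3r=\frac32,\qquad\text{that is}\qquad r=\frac{6p}{3p-4}.
\]
Smaller $r\ge2$ follow on the bounded domain. The lower-order products are handled by H\"older's inequality together with the bounds $\mathbf{u},\mathbf{b}\in L^\infty(0,T;H^1)\cap L^2(0,T;H^2)$ and the bounds on $\mu,\phi$ from Section 3. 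Examples are $\rho\mathbf{u}\cdot\nabla\mathbf{u}$, $\nabla\phi\cdot\nabla\mathbf{u}$ and $\rho\mu\nabla\phi$. For instance,
\[
\|\mathbf{u}\cdot\nabla\mathbf{u}\|_r\le\|\mathbf{u}\|_\infty\|\nabla\mathbf{u}\|_r,
\]
and the bounded weight $\sqrt t\le\sqrt T$ absorbs any surplus. Where a product needs a bit more regularity, I bound it by interpolation with an $\varepsilon$ times the left-hand side and absorb that term. This yields \eqref{3DD}.

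\textbf{Step 3: the Lipschitz bound \eqref{eq-s}, which I expect to be the main obstacle.} I fix $s<4/3$. I choose $p$ slightly larger than $2$ so that $r=\frac{6p}{3p-4}>3$, which gives $W^{1,r}\hookrightarrow L^\infty$. By Gagliardo--Nirenberg,
\[
\|\nabla\mathbf{u}\|_\infty\lesssim\|\nabla\mathbf{u}\|_2^{1-\beta}\|\nabla^2\mathbf{u}\|_r^{\beta}+\|\nabla\mathbf{u}\|_2 .
\]
I then write $\|\nabla^2\mathbf{u}\|_r=t^{-1/2}\|\nabla^2\sqrt t\,\mathbf{u}\|_r$ and apply H\"older in time:
\[
\int_0^T\|\nabla\mathbf{u}\|_\infty^s\,dt\lesssim\Big(\int_0^T t^{-\frac{s\beta}{2}\cdot\frac{p}{p-s\beta}}\,dt\Big)^{1-\frac{s\beta}{p}}\,\|\nabla^2\sqrt t\,\mathbf{u}\|_{L^p(L^r)}^{s\beta}.
\]
The time integral is finite exactly when the exponent of $t$ stays below $1$. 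Checking that this holds precisely for $s<4/3$ is the delicate part. To do so I would parametrize $r$ near $3^+$ and $p$ near $2^+$, and verify that the condition reduces to $s<4/3$ in the limit. The resulting integral is a positive power of $T$, which gives the factor $T^\theta$.

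The same computation applies verbatim to $\mathbf{b}$, $\mu$ and $\phi$. For $\phi$, and to some extent $\mu$, there is slack, since the unweighted bounds $\mu\in L^2(0,T;W^{2,6})$ and $\phi\in L^\infty(0,T;W^{2,6})$ from Theorem \ref{1.2} already control $\nabla\mu$ and $\nabla\phi$ in $L^\infty$ directly.
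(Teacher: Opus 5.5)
Your proposal is correct. It shares the paper's skeleton: Stokes/elliptic estimates for the $\sqrt t$-weighted system, with the bounds of Lemma \ref{lemma3.2} interpolated between $L^\infty(0,T;L^2)$ and $L^2(0,T;L^6)$ to reach $r=\frac{6p}{3p-4}$. It departs from the paper at two technical points.

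\textbf{Nonlinear products.} The paper bootstraps. It first places the products in $L^2(0,T;L^3)\cap L^\infty(0,T;L^{3/2})$, which only yields the range $\frac2p+\frac3r=2$. It then applies $W^{1,r}\hookrightarrow L^q$, $\frac3q=\frac3r-1$, to $\nabla\sqrt t\,\mathbf{u}$ and multiplies by the $L^\infty(0,T;L^6)$ bounds to upgrade to $r\le\frac{6p}{3p-4}$. Your Gagliardo--Nirenberg-plus-Young absorption reaches the same range in one pass. It is legitimate: for a.e.\ $t$ the right-hand sides lie in $L^6$, so the absorbed norm $\|\nabla^2\mathbf{u}(t)\|_r$ is finite.

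\textbf{Lipschitz bound.} The paper uses the plain embedding $\|\nabla\mathbf{u}\|_\infty\lesssim\|\nabla\mathbf{u}\|_{W^{1,r}}$ with $r>3$. The time weight is then $t^{-s/2}$, integrability needs $s<\frac{2p}{p+2}$, and approaching $s\to\frac43$ forces $p\to4^-$, i.e.\ $r\to3^+$. Your interpolation also uses $\nabla\mathbf{u}\in L^\infty(0,T;L^2)$ and is sharper. With $\beta=\frac{3r}{5r-6}=\frac{3p}{2(p+2)}$, the condition $\frac{s\beta p}{2(p-s\beta)}<1$ is equivalent to $s<\frac{2p}{\beta(p+2)}=\frac43$ for every $p\in[2,4)$. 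So the step you flagged as delicate is a one-line identity and needs no limit.

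You should write it that way, because your proposed parametrization ``$r$ near $3^+$ and $p$ near $2^+$'' is inconsistent with $r=\frac{6p}{3p-4}$. Taking $p$ near $2$ gives $r$ near $6$; $r\to3^+$ corresponds to $p\to4^-$, which is the paper's regime.

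\textbf{Shortcut for $\phi$ and $\mu$.} Your use of $\phi\in L^\infty(0,T;W^{2,6})$ and $\mu\in L^2(0,T;W^{2,6})$ is valid. It avoids the weighted estimates for those two fields, whereas the paper treats them uniformly with $\mathbf{u}$ and $\mathbf{b}$.
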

\begin{proof}
From \eqref{1.1}, we have
\begin{align}
\begin{cases}
&-\mathrm{div}\left( \sqrt{t}\, v(\phi)\, D\mathbf{u} \right) + \nabla \sqrt{t}\, p = -\mathrm{div}\sqrt{t}\, (\nabla \phi \otimes \nabla \phi) - \rho \sqrt{t}\, (\mathbf{u}_t + \mathbf{u} \cdot \nabla \mathbf{u})+  \mathbf{b} \cdot \nabla \mathbf{b}
,\label{eq-n} \\
&\mathrm{div}\sqrt{t}\, \mathbf{u} = 0,\\
&\Delta\sqrt{t}\mathbf{b}=\sqrt{t}\mathbf{b}_t + \sqrt{t}\mathbf{u} \cdot \nabla \mathbf{b}-\sqrt{t}\mathbf{b} \cdot \nabla \mathbf{u},\\
&\Delta \sqrt{t}\mu = \rho \sqrt{t}\, \phi_t + \rho \sqrt{t}\, \mathbf{u} \nabla \phi, \\
&-\Delta \sqrt{t}\, \phi + \rho \sqrt{t}\, \Phi'(\phi) = \rho \sqrt{t}\, \mu.
\end{cases}
\end{align}
Obviously,
\begin{equation*}
\begin{split}
\Phi'(\phi)&=\phi^3-\phi,\\
-\operatorname{div}(\nabla \phi \otimes \nabla \phi) &= -\Delta \phi \nabla \phi - \nabla \Big( \frac{1}{2} |\nabla \phi|^2 \Big) \\
&= \rho \mu \nabla \phi - \rho \Phi'(\phi) \nabla \phi - \nabla \Big( \frac{1}{2} |\nabla \phi|^2 \Big) \\
&= \rho \mu \nabla \phi - \rho \nabla \Phi(\phi) - \nabla \Big( \frac{1}{2} |\nabla \phi|^2 \Big)\\
&= \rho \mu \nabla \phi - \rho (\phi^3-\phi)\nabla\phi - \nabla\phi \nabla^2\phi.
\end{split}
\end{equation*}
From Theorem \ref{1.1} and \eqref{3D}, we get
\begin{equation*}
\rho\sqrt{t}\mathbf{u}_t, \sqrt{t}\mathbf{b}_t, \rho\sqrt{t}\phi_t \in L^\infty(0,T;L^2) \cap L^2(0,T;H^1),
\end{equation*}
which together with $\dot{H}^1(\Omega) \hookrightarrow L^6(\Omega)$, yields that
\begin{equation*}
\rho\sqrt{t}\mathbf{u}_t, \sqrt{t}\mathbf{b}_t, \rho\sqrt{t}\phi_t \in L^\infty(0,T;L^2) \cap L^2(0,T;L^q) \quad \text{with} \quad q \leq 6.
\end{equation*}
This along with the interpolation inequality  gives rise to
\begin{equation*}
\|\rho\sqrt{t}\mathbf{u}_t\|_{L^p(0,T;L^r)} \leq \|\rho\sqrt{t}\mathbf{u}_t\|_{L^\infty(0,T;L^2)}^{1 - \frac{2}{p}} \|\rho\sqrt{t}\mathbf{u}_t\|_{L^2(0,T;L^q)}^{\frac{2}{p}},
\end{equation*}
\begin{equation*}
\|\sqrt{t}\mathbf{b}_t\|_{L^p(0,T;L^r)} \leq \|\sqrt{t}\mathbf{b}_t\|_{L^\infty(0,T;L^2)}^{1 - \frac{2}{p}} \|\sqrt{t}\mathbf{b}_t\|_{L^2(0,T;L^q)}^{\frac{2}{p}},
\end{equation*}
and
\begin{equation*}
\|\rho\sqrt{t}\phi_t\|_{L^p(0,T;L^r)} \leq \|\rho\sqrt{t}\phi_t\|_{L^\infty(0,T;L^2)}^{1 - \frac{2}{p}} \|\rho\sqrt{t}\phi_t\|_{L^2(0,T;L^q)}^{\frac{2}{p}},
\end{equation*}
with \(\frac{1}{r} = \frac{p - 2}{2p} + \frac{2}{pq}\). Here, when \(q\) takes \(6\), then \(r\) may take the maximum value of \(\frac{6p}{3p - 4}\). Then we readily get
\begin{equation}
\|(\rho\sqrt{t}\mathbf{u}_t, \sqrt{t}\mathbf{b}_t, \rho\sqrt{t}\phi_t)\|_{L^p(0,T;L^r)} \leq C_{0,T} \quad \text{for} \quad p \in [2,\infty], \quad r \in \Big[2, \frac{6p}{3p - 4}\Big].\label{eq-11}
\end{equation}
It follows from Theorem \ref{1.1} and $0<\rho_* \leq \rho(x, t) \leq \rho^*$, that
\begin{equation*}
\rho\sqrt{t}\phi, \rho\sqrt{t}\mu, \rho\sqrt{t}\phi^3 \in L^\infty(0,T;L^2) \cap L^2(0,T;H^1).
\end{equation*}
Using $\dot{H}^1(\Omega) \hookrightarrow L^6(\Omega)$, we have
\begin{equation*}
\rho\sqrt{t}\phi, \rho\sqrt{t}\mu, \rho\sqrt{t}\phi^3 \in L^\infty(0,T;L^2) \cap L^2(0,T;L^q) \quad \text{with} \quad q \leq 6.
\end{equation*}
Employing  interpolation inequality, we conclude that
\begin{align*}
\|\rho\sqrt{t}\phi\|_{L^p(0,T;L^r)} \leq \|\rho\sqrt{t}\phi\|_{L^\infty(0,T;L^2)}^{1 - \frac{2}{p}} \|\rho\sqrt{t}\phi\|_{L^2(0,T;L^q)}^{\frac{2}{p}},\\
\|\rho\sqrt{t}\mu\|_{L^p(0,T;L^r)} \leq \|\rho\sqrt{t}\mu\|_{L^\infty(0,T;L^2)}^{1 - \frac{2}{p}} \|\rho\sqrt{t}\mu\|_{L^2(0,T;L^q)}^{\frac{2}{p}},\\
\|\rho\sqrt{t}\phi^3\|_{L^p(0,T;L^r)} \leq \|\rho\sqrt{t}\phi^3\|_{L^\infty(0,T;L^2)}^{1 - \frac{2}{p}} \|\rho\sqrt{t}\phi^3\|_{L^2(0,T;L^q)}^{\frac{2}{p}}
\end{align*}
with \(\frac{1}{r} = \frac{p - 2}{2p} + \frac{2}{pq}\). Here, when \(q\) takes \(6\), then \(r\) may take the maximum value of \(\frac{6p}{3p - 4}\). Then we have
\begin{equation}
\|(\rho\sqrt{t}\phi, \rho\sqrt{t}\mu,\rho\sqrt{t}\phi^3)\|_{L^p(0,T;L^r)} \leq C_{0,T} \quad \text{for} \quad p \in [2,\infty], \quad r \in \Big[2, \frac{6p}{3p - 4}\Big].\label{eq-12}
\end{equation}
According to Theorem \ref{1.1}, we have $\nabla \mathbf{u}, \nabla \mathbf{b}, \nabla \phi \in L^\infty(0,T;L^2) \cap L^2(0,T;H^1)$, and from $\dot{H}^1(\Omega) \hookrightarrow L^6(\Omega)$, we get $\nabla \mathbf{u}, \nabla \phi \in L^\infty(0,T;L^2) \cap L^2(0,T;L^q)\) with \(q \leq 6$. By interpolation inequality, we obtain
\begin{equation*}
\|\nabla \mathbf{u}\|_{L^p(0,T;L^r)} \leq \|\nabla \mathbf{u}\|_{L^\infty(0,T;L^2)}^{1 - \frac{2}{p}} \|\nabla \mathbf{u}\|_{L^2(0,T;L^q)}^{\frac{2}{p}},
\end{equation*}
\begin{equation*}
\|\nabla \mathbf{b}\|_{L^p(0,T;L^r)} \leq \|\nabla \mathbf{b}\|_{L^\infty(0,T;L^2)}^{1 - \frac{2}{p}} \|\nabla \mathbf{b}\|_{L^2(0,T;L^q)}^{\frac{2}{p}},
\end{equation*}
\begin{equation*}
\|\nabla \phi\|_{L^p(0,T;L^r)} \leq \|\nabla \phi\|_{L^\infty(0,T;L^2)}^{1 - \frac{2}{p}} \|\nabla \phi\|_{L^2(0,T;L^q)}^{\frac{2}{p}}
\end{equation*}
with \(\frac{1}{r} = \frac{p - 2}{2p} + \frac{2}{pq}\). Then we have
\begin{equation*}
\|(\nabla \mathbf{u}, \nabla \mathbf{b}, \nabla \phi)\|_{L^p(0,T;L^r)} \leq C_{0,T} \quad \text{for} \quad p \in [2,\infty], \quad r \in \Big[2, \frac{6p}{3p - 4}\Big],
\end{equation*}
which means that
\begin{equation}
\nabla \mathbf{u}, \nabla \mathbf{b}, \nabla \phi \in L^4(0, T; L^3).\label{eq-15}
\end{equation}
On the other hand, using Gagliardo-Nirenberg interpolation inequality $\|v\|_{L^\infty}^4 \leq C\|\nabla v\|_{L^2}^2 \|\nabla^2 v\|_{L^2}^2$ leads to
\begin{align*}
\|\mathbf{u}\|_{L^4(0,T;L^\infty)} &\leq \|\nabla \mathbf{u}\|_{L^\infty(0,T;L^2)}^{\frac{1}{2}} \|\nabla^2 \mathbf{u}\|_{L^2(0,T;L^2)}^{\frac{1}{2}},\label{eq-16}  \\
\|\mathbf{b}\|_{L^4(0,T;L^\infty)} &\leq \|\nabla \mathbf{b}\|_{L^\infty(0,T;L^2)}^{\frac{1}{2}} \|\nabla^2 \mathbf{b}\|_{L^2(0,T;L^2)}^{\frac{1}{2}},  \\
\|\phi\|_{L^4(0,T;L^\infty)} &\leq \|\nabla \phi\|_{L^\infty(0,T;L^2)}^{\frac{1}{2}} \|\nabla^2 \phi\|_{L^2(0,T;L^2)}^{\frac{1}{2}},\\
\|\phi^3\|_{L^4(0,T;L^\infty)}&=\|\phi\|^3_{L^{12}(0,T;L^\infty)}\leq \|\nabla \phi\|_{L^\infty(0,T;L^2)}^{\frac{3}{2}} \|\nabla^2 \phi\|_{L^{6}(0,T;L^2)}^{\frac{3}{2}},\\
\|\mu\|_{L^4(0,T;L^\infty)} &\leq \|\nabla \mu\|_{L^\infty(0,T;L^2)}^{\frac{1}{2}} \|\nabla^2 \mu\|_{L^2(0,T;L^2)}^{\frac{1}{2}}.
\end{align*}
Thanks to Theorem  \ref{1.1}, we conclude that
\begin{align}
\sqrt{t}\rho \mathbf{u}, \sqrt{t} \mathbf{b}, \sqrt{t}\rho \phi,\sqrt{t}\rho |\phi|^3,\sqrt{t}\rho \mu\in L^4(0,T;L^\infty).
\end{align}
Using Holder's inequality, and then combining with \eqref{eq-15} and \eqref{eq-16}, give rise to
\begin{align*}
\sqrt{t}\rho \mathbf{u} \cdot \nabla \mathbf{u}, \sqrt{t}\mathbf{u} \cdot \nabla \mathbf{b}, \sqrt{t}\mathbf{b} \cdot \nabla \mathbf{u}, \sqrt{t}\mathbf{b} \cdot \nabla \mathbf{b}, \sqrt{t}\rho \mathbf{u}\cdot  \nabla \phi,\sqrt{t}\rho |\phi|^3\nabla \phi,\sqrt{t}\rho \mu  \nabla \phi,\sqrt{t}\rho\phi\nabla \phi \in L^2(0,T;L^3).
\end{align*}
Similarly, we also have
\begin{align*}
\sqrt{t}\rho \mathbf{u}, \sqrt{t}\mathbf{b}, \sqrt{t}\rho \phi,\sqrt{t}\rho |\phi|^3,\sqrt{t}\rho\mu\in L^\infty(0,T;L^6);
\nabla \mathbf{u}, \nabla \mathbf{b}, \nabla \phi \in L^\infty(0,T;L^2),
\end{align*}
which implies that
\begin{equation*}
\sqrt{t}\rho \mathbf{u}\cdot \nabla\mathbf{u}, \sqrt{t}\mathbf{u} \cdot \nabla \mathbf{b}, \sqrt{t}\mathbf{b} \cdot \nabla \mathbf{u}, \sqrt{t}\mathbf{b} \cdot \nabla \mathbf{b}, \sqrt{t}\rho \mathbf{u} \nabla \phi ,\sqrt{t}\rho |\phi|^3\nabla \phi,\sqrt{t}\rho \mu \nabla \phi,\sqrt{t}\rho\phi\nabla \phi\in L^\infty(0, T; L^{3/2}).
\end{equation*}
It follows from the interpolation inequality and Holder's inequality that
\begin{align*}
\|\sqrt{t}\rho \mathbf{u} \cdot \nabla \mathbf{u}\|_{L^p(0,T;L^r)} &\leq \|\sqrt{t}\rho \mathbf{u} \cdot \nabla \mathbf{u}\|_{L^2(0,T;L^3)}^{\frac{2}{p}} \|\sqrt{t}\rho \mathbf{u} \cdot \nabla \mathbf{u}\|_{L^\infty(0,T;L^{3/2})}^{1 - \frac{2}{p}}, \\
\|\sqrt{t}\mathbf{u} \cdot \nabla \mathbf{b}\|_{L^p(0,T;L^r)} &\leq \|\sqrt{t} \mathbf{u} \cdot \nabla \mathbf{b}\|_{L^2(0,T;L^3)}^{\frac{2}{p}} \|\sqrt{t} \mathbf{u} \cdot \nabla \mathbf{b}\|_{L^\infty(0,T;L^{3/2})}^{1 - \frac{2}{p}}, \\
\|\sqrt{t}\mathbf{b} \cdot \nabla \mathbf{u}\|_{L^p(0,T;L^r)} &\leq \|\sqrt{t} \mathbf{b} \cdot \nabla \mathbf{u}\|_{L^2(0,T;L^3)}^{\frac{2}{p}} \|\sqrt{t} \mathbf{b} \cdot \nabla \mathbf{u}\|_{L^\infty(0,T;L^{3/2})}^{1 - \frac{2}{p}}, \\
\|\sqrt{t}\mathbf{b} \cdot \nabla \mathbf{b}\|_{L^p(0,T;L^r)} &\leq \|\sqrt{t} \mathbf{b} \cdot \nabla \mathbf{b}\|_{L^2(0,T;L^3)}^{\frac{2}{p}} \|\sqrt{t} \mathbf{b} \cdot \nabla \mathbf{b}\|_{L^\infty(0,T;L^{3/2})}^{1 - \frac{2}{p}}, \\
\|\sqrt{t}\rho \mathbf{u} \cdot\nabla \phi\|_{L^p(0,T;L^r)} &\leq \|\sqrt{t}\rho \mathbf{u}\cdot\nabla \phi\|_{L^2(0,T;L^3)}^{\frac{2}{p}} \|\sqrt{t}\rho \mathbf{u} \cdot\nabla \phi\|_{L^\infty(0,T;L^{3/2})}^{1 - \frac{2}{p}},\\
\|\sqrt{t}\rho \mu\nabla \phi\|_{L^p(0,T;L^r)} &\leq \|\sqrt{t}\rho \mu\nabla \phi\|_{L^2(0,T;L^3)}^{\frac{2}{p}} \|\sqrt{t}\rho \mu\nabla \phi\|_{L^\infty(0,T;L^{3/2})}^{1 - \frac{2}{p}},\\
\|\sqrt{t}\rho \phi\nabla \phi\|_{L^p(0,T;L^r)} &\leq \|\sqrt{t}\rho \phi\nabla \phi\|_{L^2(0,T;L^3)}^{\frac{2}{p}} \|\sqrt{t}\rho \phi\nabla \phi\|_{L^\infty(0,T;L^{3/2})}^{1 - \frac{2}{p}},\\
\|\sqrt{t}\rho \phi^3\nabla \phi\|_{L^p(0,T;L^r)} &\leq \|\sqrt{t}\rho \phi^3\nabla \phi\|_{L^2(0,T;L^3)}^{\frac{2}{p}} \|\sqrt{t}\rho \phi^3\nabla \phi\|_{L^\infty(0,T;L^{3/2})}^{1 - \frac{2}{p}}.
\end{align*}
By Theorem  \ref{1.1}, we conclude that
$\sqrt{t}\nabla^2\phi$,$\nabla\phi \in L^4(0, T; L^6)$,
thus
$\sqrt{t}\nabla^2\phi\nabla\phi \in L^2(0, T; L^3)$.
Similarly, we have
\begin{align*}
\sqrt{t}\nabla\phi\in L^\infty(0,T;L^6) ,
\nabla^2\phi\in L^\infty(0,T;L^2),
\end{align*}
which implies that
\begin{equation*}
\sqrt{t}\nabla\phi\nabla^2\phi\in L^\infty(0, T; L^{3/2}).
\end{equation*}
Hence
\begin{align*}
\|\sqrt{t} \nabla \phi\nabla^2\phi\|_{L^p(0,T;L^r)} &\leq \|\sqrt{t} \nabla \phi\nabla^2\phi\|_{L^2(0,T;L^3)}^{\frac{2}{p}} \|\sqrt{t} \nabla \phi\nabla^2\phi\|_{L^\infty(0,T;L^{3/2})}^{1 - \frac{2}{p}},
\end{align*}
with \(\frac{2}{p} + \frac{3}{r} = 2,\, p \geq 2\). Using the maximal regularity estimate for the Stokes equations and the standard estimate of elliptic equations for \eqref{eq-n} yields that
\begin{equation}
\left\lVert (\nabla^2 \sqrt{t} \mathbf{u}, \nabla^2 \sqrt{t} \mathbf{b}, \nabla^2 \sqrt{t} \phi,  \nabla^2 \sqrt{t} \mu) \right\rVert_{L^p(0,T;L^r)} + \left\lVert \nabla \sqrt{t} P \right\rVert_{L^p(0,T;L^r)} \leq C_{0,T} ,\label{eq-v}
\end{equation}
for $p \geq 2 $ and $ \frac{2}{p} + \frac{3}{r} = 2 $. Furthermore, from \eqref{eq-v} and the embedding \( W^1_r(\Omega) \hookrightarrow L^q(\Omega) \) with \( \frac{3}{q} = \frac{3}{r} - 1 \) if \( 1 \leq r < 3 \), we have
\begin{equation}
\nabla \sqrt{t} \mathbf{u}, \nabla \sqrt{t} \mathbf{b}, \nabla \sqrt{t} \phi, \nabla \sqrt{t} \mu \in L^p(0, T; L^q)  \quad \text{with} \quad \frac{2}{p} + \frac{3}{r} = 2.\label{eq-t}
\end{equation}
Employing bounds  of \( \rho \mathbf{u}, \mathbf{b}, \rho\mu, \rho\phi, \rho\phi^3, \nabla^2\phi \in L^\infty(0, T; L^6) \), \eqref{eq-t} and H\"older's inequality, yield that
\begin{equation}
\begin{split}
\sqrt{t} \rho \mathbf{u} \cdot \nabla \mathbf{u}, \sqrt{t} \mathbf{u} \cdot \nabla \mathbf{b}, \sqrt{t} \mathbf{b} \cdot \nabla \mathbf{u}, \sqrt{t} \mathbf{b} \cdot \nabla \mathbf{b}, \sqrt{t} \rho \mathbf{u} \cdot \nabla \phi,\label{eq-13}\\
\sqrt{t} \rho \mu \nabla \phi, \sqrt{t} \rho \phi \nabla \phi, \sqrt{t} \rho \phi^3 \nabla \phi, \sqrt{t} \nabla \phi\nabla^2\phi, \in L^p(0, T; L^r),
\end{split}
\end{equation}
for all $(p, r)$, such that $p \geq 2$, $2 \leq r \leq \frac{6p}{3p - 4}$. From \eqref{eq-11}, \eqref{eq-12} and \eqref{eq-13}, we deduce that
\begin{equation}
\bigl\| (\nabla^2 \sqrt{t} \mathbf{u}, \nabla^2 \sqrt{t} \mathbf{b}, \nabla^2 \sqrt{t} \phi, \nabla^2 \sqrt{t} \mu) \bigr\|_{L^p(0, T; L^r)} + \bigl\| \nabla \sqrt{t} P \bigr\|_{L^p(0, T; L^r)} \leq C_{0, T},\label{eq-14}
\end{equation}
for all \( (p, r) \) such that $ p \geq 2, 2\leq r \leq \frac{6p}{3p - 4} $. Fix $p \in (2,4)$ such that $ps < 2p - 2s$,  and taking  $r=\frac{6p}{3p - 4}$,  thanks to $W_r^1(\Omega) \hookrightarrow L^\infty(\Omega)$(since $r > 3$ for $2 < p < 4$) and \eqref{eq-14}, we have
\begin{equation*}
\begin{split}
&\big( \int_0^T \big\lVert \nabla \mathbf{u} \big\rVert_\infty^s dt \big)^{\frac{1}{s}} +\big( \int_0^T \big\lVert \nabla \mathbf{b} \big\rVert_\infty^s dt \big)^{\frac{1}{s}} + \big( \int_0^T \big\lVert \nabla \phi \big\rVert_\infty^s dt \big)^{\frac{1}{s}}
+\big( \int_0^T \big\lVert \nabla\mu \big\rVert_\infty^s dt \big)^{\frac{1}{s}}  \\
&\leq C \big( \int_0^T \big\lVert \sqrt{t} \nabla \mathbf{u} \big\rVert_{W_r^1}^s \frac{dt}{(\sqrt{t})^s} \big)^{\frac{1}{s}} + C \big( \int_0^T \big\lVert \sqrt{t} \nabla \mathbf{b} \big\rVert_{W_r^1}^s \frac{dt}{(\sqrt{t})^s} \big)^{\frac{1}{s}} \\&\quad+ C \big( \int_0^T \big\lVert \sqrt{t} \nabla \phi \big\rVert_{W_r^1}^s \frac{dt}{(\sqrt{t})^s} \big)^{\frac{1}{s}}
+C \big( \int_0^T \big\lVert \sqrt{t} \nabla \mu \big\rVert_{W_r^1}^s \frac{dt}{(\sqrt{t})^s} \big)^{\frac{1}{s}} \\
&\leq C \big( \int_0^T t^{-\frac{ps}{2p - 2s}} dt \big)^{\frac{1}{s} - \frac{1}{p}} \big( \big\lVert \nabla \sqrt{t} \mathbf{u} \big\rVert_{L_p(0,T;W_r^1)} + \big\lVert \nabla \sqrt{t} \mathbf{b} \big\rVert_{L_p(0,T;W_r^1)} \\&\quad+ \big\lVert \nabla \sqrt{t} \phi \big\rVert_{L_p(0,T;W_r^1)}+\big\lVert \nabla \sqrt{t} \mu \big\rVert_{L_p(0,T;W_r^1)}  \big) \\
&\leq C_{0} T^{-\frac{2p - 2s - ps}{2ps}},
\end{split}
\end{equation*}
which concludes that \eqref{eq-s} holds.
\end{proof}
\begin{lemma}
Assume \( d = 2 \), then for all \( T > 0 \), \( p \in [2,\infty] \),  we have
\begin{equation}
\left\lVert (\nabla^2 \sqrt{t} \mathbf{u}, \nabla^2 \sqrt{t} \mathbf{b}, \nabla^2 \sqrt{t}\phi, \nabla^2 \sqrt{t} \mu) \right\rVert_{L^p(0,T;L^{r})} + \left\lVert \nabla \sqrt{t} P \right\rVert_{L^p(0,T;L^{r})} \leq C_{0,T},\label{eq-r}
\end{equation}
where $\ r \in [2,\frac{6p}{3p - 4}]$, \( C_{0,T} \) depends only on \( \rho^* \), \( p \) and the initial value. Furthermore, for \( s \in \left[1,2\right) \), then for some \( \theta > 0 \), we have
\begin{equation}
\int_0^T \left( \left\lVert \nabla \mathbf{u} \right\rVert_\infty^s +\left\lVert \nabla \mathbf{b} \right\rVert_\infty^s + \left\lVert \nabla \mu\right\rVert_\infty^s + \left\lVert \nabla\phi \right\rVert_\infty^s \right) dt \leq C_{0,T} T^\theta.\label{eq-x}
\end{equation}
\end{lemma}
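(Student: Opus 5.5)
The plan follows the proof of Lemma \ref{3DDD}, with the three-dimensional Sobolev embeddings replaced by their two-dimensional counterparts. This gives more room in the integrability exponents, which is why the admissible range of $s$ widens from $[1,\frac43)$ to $[1,2)$.

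\textbf{Step 1: bounds on the time-derivative terms.} From the two-dimensional weighted estimate \eqref{2D} we have
\[
\rho\sqrt t\,\mathbf{u}_t,\ \sqrt t\,\mathbf{b}_t,\ \rho\sqrt t\,\phi_t\in L^\infty(0,T;L^2)\cap L^2(0,T;H^1).
\]
In 2D, $H^1\hookrightarrow L^q$ for every $q<\infty$. Interpolating between $L^\infty(0,T;L^2)$ and $L^2(0,T;L^q)$ gives these terms in $L^p(0,T;L^r)$ with
\[
\frac1r=\frac{p-2}{2p}+\frac{2}{pq}.
\]
Letting $q$ be large, this covers every $r\in[2,\frac{6p}{3p-4}]$; in fact $r$ may go up to $\frac{2p}{p-2}$.

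\textbf{Step 2: bounds on the nonlinear terms.} The quantities $\rho\sqrt t\phi$, $\rho\sqrt t\mu$ and $\rho\sqrt t\phi^3$ are handled the same way. For the nonlinear terms
\[
\sqrt t\rho\mathbf{u}\cdot\nabla\mathbf{u},\ \ \sqrt t\,\mathbf{u}\cdot\nabla\mathbf{b},\ \ \sqrt t\,\mathbf{b}\cdot\nabla\mathbf{u},\ \ \sqrt t\,\mathbf{b}\cdot\nabla\mathbf{b},\ \ \sqrt t\rho\mathbf{u}\cdot\nabla\phi,\ \ \sqrt t\rho\mu\nabla\phi,\ \ \sqrt t\rho(\phi^3-\phi)\nabla\phi,\ \ \sqrt t\,\nabla\phi\nabla^2\phi,
\]
I will use the global bounds of Theorem \ref{1.3}:
\[
\mathbf{u},\mathbf{b}\in L^\infty(0,T;\mathbf{V}_\sigma)\cap L^2(0,T;H^2),\qquad \phi,\mu\in L^\infty(0,T;H^1),\qquad \mu\in L^2(0,T;W^{2,q}).
\]
By Ladyzhenskaya \eqref{eq:3.3} and Agmon \eqref{eq:3.5} these give $\mathbf{u},\mathbf{b},\phi,\mu\in L^4(0,T;L^\infty)\cap L^\infty(0,T;L^q)$ and $\nabla\mathbf{u},\nabla\mathbf{b},\nabla\phi\in L^4(0,T;L^4)\cap L^\infty(0,T;L^2)$. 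Hölder's inequality then places each product in $L^2(0,T;L^{q})\cap L^\infty(0,T;L^{2-})$ for $q$ large. Interpolating once more puts all right-hand sides in $L^p(0,T;L^r)$ for $r\in[2,\frac{6p}{3p-4}]$.

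\textbf{Step 3: elliptic and Stokes regularity.} Apply maximal regularity for the variable-viscosity Stokes system (using $\nu\in W^{1,\infty}$ and $\nabla\phi\in L^\infty(0,T;L^q)$ to move $\nu'(\phi)D\mathbf{u}\nabla\phi$ to the right-hand side), together with Neumann/Dirichlet $W^{2,r}$ estimates for the $\mathbf{b}$, $\mu$ and $\phi$ equations in system \eqref{eq-n}. This yields \eqref{eq-r}. I expect one bootstrap step, exactly as from \eqref{eq-v} to \eqref{eq-14}: first obtain the estimate on the curve $\frac2p+\frac2r=2$, then use $W^{1,r}\hookrightarrow L^{q}$ to upgrade the integrability of the gradients, and then recompute the products.

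\textbf{Step 4: the time-integrated Lipschitz bound \eqref{eq-x}.} Fix $p\in(2,4)$ and take $r=\frac{6p}{3p-4}>2$, so that $W^{1,r}\hookrightarrow L^\infty$ in 2D. Write $\nabla\mathbf{u}=t^{-1/2}(\sqrt t\,\nabla\mathbf{u})$ and apply Hölder in time:
\[
\int_0^T\|\nabla\mathbf{u}\|_\infty^s\,dt\le\Big(\int_0^T t^{-\frac{ps}{2(p-s)}}dt\Big)^{1-\frac sp}\|\sqrt t\,\nabla\mathbf{u}\|_{L^p(0,T;W^{1,r})}^s.
\]
The time integral is finite precisely when $\frac{ps}{2(p-s)}<1$, i.e. $s<\frac{2p}{p+2}$. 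Since $r>2$ already holds for every $p<4$, the relevant limitation is $p\to4^-$ (and with the wider 2D range of $r$, larger $p$ remains admissible). I will choose $p$ close to the upper end of the allowed range so that $\frac{2p}{p+2}$ exceeds the given $s<2$. This produces the factor $T^\theta$ with $\theta=(1-\frac sp)\bigl(1-\frac{ps}{2(p-s)}\bigr)>0$, and the same argument applies to $\mathbf{b}$, $\phi$ and $\mu$.

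\textbf{Main obstacle.} The hardest step is the exponent bookkeeping in Step 4, namely verifying that the admissible pair $(p,r)$ actually allows every $s<2$. To cover the full interval I will exploit the larger 2D range $r\le\frac{2p}{p-2}$ coming from Step 1, and allow $p$ to grow to $\infty$ while keeping $r>2$. A secondary difficulty is the variable-viscosity Stokes maximal regularity in Step 3, where the term $\nu'(\phi)\nabla\phi$ must be controlled in the right $L^r$ spaces.
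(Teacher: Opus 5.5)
Your plan is essentially the paper's proof: interpolate the weighted bounds of \eqref{2D}, use H\"older for the products, apply Stokes/elliptic $W^{2,r}$ estimates to \eqref{eq-n}, then use H\"older in time against $t^{-1/2}$. Your exponent $\theta=(1-\frac sp)(1-\frac{ps}{2(p-s)})=\frac{2p-2s-ps}{2p}>0$ is the correct one; the paper's final display has a sign slip in the exponent.

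The one thing to correct is Step 4. The cap $p<4$ is a three-dimensional artifact, since it is what makes $\frac{6p}{3p-4}>3$, and if kept it only gives $s<\frac{2p}{p+2}<\frac43$. In two dimensions $r=\frac{6p}{3p-4}>2$ for every finite $p$, so for a given $s<2$ you just take any finite $p>\frac{2s}{2-s}$, as the paper does; the wider range $r<\frac{2p}{p-2}$ is not needed.
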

\begin{proof}
Using \eqref{2D} and $0<\rho_* \leq \rho(x, t) \leq \rho^*$  yields that
\begin{equation*}
(\rho\sqrt{t}\mathbf{u}_t, \sqrt{t}\mathbf{b}_t, \rho\sqrt{t}\phi_t) \in L^\infty(0, T; L^2)\bigcap L^2(0, T; H^1).
\end{equation*}
 We have
\begin{equation*}
(\rho\sqrt{t}\mathbf{u}_t, \sqrt{t}\mathbf{b}_t, \rho\sqrt{t}\phi_t) \in L^2(0, T; L^q) \quad \text{for} \quad q < \infty.
\end{equation*}
Then, it follows from the interpolation inequality, that
\begin{equation*}
\|\rho\sqrt{t}\mathbf{u}_t\|_{L^p(0,T;L^r)} \leq \|\rho\sqrt{t}\mathbf{u}_t\|_{L^\infty(0,T;L^2)}^{1 - \frac{2}{p}} \|\rho\sqrt{t}\mathbf{u}_t\|_{L^2(0,T;L^q)}^{\frac{2}{p}},
\end{equation*}
\begin{equation*}
\|\rho\sqrt{t}\mathbf{b}_t\|_{L^p(0,T;L^r)} \leq \|\rho\sqrt{t}\mathbf{b}_t\|_{L^\infty(0,T;L^2)}^{1 - \frac{2}{p}} \|\rho\sqrt{t}\mathbf{b}_t\|_{L^2(0,T;L^q)}^{\frac{2}{p}},
\end{equation*}
\begin{equation*}
\|\rho\sqrt{t}\phi_t\|_{L^p(0,T;L^r)} \leq \|\rho\sqrt{t}\phi_t\|_{L^\infty(0,T;L^2)}^{1 - \frac{2}{p}} \|\rho\sqrt{t}\phi_t\|_{L^2(0,T;L^q)}^{\frac{2}{p}},
\end{equation*}
with $\frac{1}{r} = \frac{p - 2}{2p} + \frac{2}{pq}$, $2 \leq r < p^*$, $p^*=\frac{2p}{p-2}$.
Thus
\begin{equation}
\|(\rho\sqrt{t}\mathbf{u}_t, \sqrt{t}\mathbf{b}_t, \rho\sqrt{t}\phi_t)\|_{L^p(0,T;L^r)} \leq C_{0,T} \quad \text{for} \quad p \in [2, \infty], \quad r \in [2, p^*).
\end{equation}
And similarly, we have
\begin{equation}
\|(\rho\sqrt{t}\mathbf{u}, \sqrt{t}\mathbf{b}, \rho\sqrt{t}\phi, \rho\sqrt{t}\mu, \rho\sqrt{t}\phi^3)\|_{L^p(0,T;L^r)} \leq C_{0,T} \quad \text{for} \quad p \in [2, \infty], \quad r \in [2, p^*).
\end{equation}
It is known from Theorem \ref{1.1} that $(\nabla \mathbf{u}, \nabla \mathbf{b}, \nabla \phi)$ is bounded in  $L^\infty(0, T; L^2) \cap L^2(0, T; H^1)$. Employing the interpolation inequality, it is easy to get,  for $\frac{1}{r} = \frac{p - 2}{2p} + \frac{2}{pq}$, $2 \leq r < p^*$, that
\begin{align*}
\|\nabla \mathbf{u}\|_{L^p(0,T;L^r)} &\le \|\nabla \mathbf{u}\|_{L^\infty(0,T;L^2)}^{1 - \frac{2}{p}} \|\nabla \mathbf{u}\|_{L^2(0,T;L^q)}^{\frac{2}{p}} \\
&\le \|\nabla \mathbf{u}\|_{L^\infty(0,T;L^2)}^{1 - \frac{2}{p}} \|\nabla \mathbf{u}\|_{L^2(0,T;H^1)}^{\frac{2}{p}}, \\
\|\nabla \mathbf{b}\|_{L^p(0,T;L^r)} &\le \|\nabla \mathbf{b}\|_{L^\infty(0,T;L^2)}^{1 - \frac{2}{p}} \|\nabla \mathbf{b}\|_{L^2(0,T;L^q)}^{\frac{2}{p}} \\
&\le \|\nabla \mathbf{b}\|_{L^\infty(0,T;L^2)}^{1 - \frac{2}{p}} \|\nabla \mathbf{b}\|_{L^2(0,T;H^1)}^{\frac{2}{p}}, \\
\|\nabla \phi\|_{L^p(0,T;L^r)} &\le \|\nabla \phi\|_{L^\infty(0,T;L^2)}^{1 - \frac{2}{p}} \|\nabla \phi\|_{L^2(0,T;L^q)}^{\frac{2}{p}} \\
&\le \|\nabla \phi\|_{L^\infty(0,T;L^2)}^{1 - \frac{2}{p}} \|\nabla \phi\|_{L^2(0,T;H^1)}^{\frac{2}{p}},
\end{align*}
which implies that
\begin{equation}
\|(\nabla \mathbf{u}, \nabla \mathbf{b}, \nabla \phi)\|_{L^p(0,T;L^r)} \le C_{0,T} \quad \text{for} \quad p \ge 2, \ r < p^*.
\end{equation}
Similarly, since $\nabla^2\phi$ is bounded in $L^\infty(0, T; L^2) \cap L^2(0, T; L^6)$, we get
\begin{align*}
\|\nabla ^2\phi\|_{L^p(0,T;L^r)} &\le \|\nabla ^2\phi\|_{L^\infty(0,T;L^2)}^{1 - \frac{2}{p}} \|\nabla ^2\phi\|_{L^2(0,T;L^q)}^{\frac{2}{p}},
\end{align*}
with \(\frac{1}{r} = \frac{p - 2}{2p} + \frac{2}{pq}\). Here, when \(q\) takes \(6\), then \(r\) may take the maximum value of \(\frac{6p}{3p - 4}\). Then we easily get
\begin{equation}
\|(\nabla^2\phi)\|_{L^p(0,T;L^r)} \le C_{0,T} \quad \text{for} \quad p \ge 2,\quad 2\leq r \leq \frac{6p}{3p - 4}.
\end{equation}
As obvious, we conclude that
\begin{align*}
&\| (\sqrt{t} \rho \mathbf{u} \cdot \nabla \mathbf{u}, \sqrt{t} \mathbf{u} \cdot \nabla \mathbf{b}, \sqrt{t}\mathbf{b} \cdot \nabla \mathbf{u}, \sqrt{t}\mathbf{b} \cdot \nabla \mathbf{b}) \|_{L^p(0,T; L^r)} \leq C_{0,T}\,  \text{for}\,  p \in [2,\infty],  r \in [2,p^*),\\
&\|( \sqrt{t} \rho \mathbf{u} \cdot\nabla \phi, \sqrt{t} \rho \phi \nabla \phi, \sqrt{t}\rho \phi^3 \nabla \phi, \sqrt{t} \rho \mu \nabla \phi ) \|_{L^p(0,T; L^r)} \leq C_{0,T}\, \text{for}\,  p \in [2,\infty],  r \in [2,p^*),\\
&\| (\sqrt{t} \nabla \phi\nabla^2 \phi) \|_{L^p(0,T; L^r)} \leq C_{0,T} \quad \text{for} \quad p \in [2,\infty], \ r \in \big[2, \frac{6p}{3p - 4}\big].
\end{align*}
Applying the maximal regularity estimate for the Stokes equations and the standard estimate for elliptic equations for \eqref{eq-n} yields that
\begin{equation}
\| (\nabla^2 \sqrt{t} \mathbf{u}, \nabla^2 \sqrt{t} \mathbf{b}, \nabla^2 \sqrt{t} \phi, \nabla^2 \sqrt{t} \phi, \nabla \sqrt{t} P ) \|_{L^p(0,T; L^r)} \leq C_{0,T} \quad \text{for} \quad p \in [2,\infty], \ r \in \Big[2, \frac{6p}{3p - 4}\Big].\label{eq-m}
\end{equation}
Furthermore, using the bound for $(\rho \mathbf{u},  \mathbf{b}, \rho \phi, \rho\mu) \in L^\infty(0,T;L^6)$ and when $d =2$, the embedding $W_r^1(\Omega) \hookrightarrow L^q(\Omega)$ with $\frac{3}{q} = \frac{3}{r} - 1$ if $2 \leq r < 3$,  which implies that $(\nabla \sqrt{t} \mathbf{u}, \nabla \sqrt{t} \phi,\nabla \sqrt{t} \mu)$ is bounded in $L^p(0,T;L^q)$, we get \eqref{eq-m} for the full range of indices.
Fix \( p \in [2,\infty) \) so that \( ps < 2(p - s) \) and \( 1 \leq s < 2 \), which means that \( \big( \int_0^T t^{-\frac{ps}{2p - 2s}} dt \big)^{\frac{1}{s} - \frac{1}{p}} \leq C_{0,T} \).
Taking \( r \in \big[2, \frac{6p}{3p - 4}\big] \) such that the embedding \( W_r^1 \hookrightarrow L^\infty \), we conclude that
\begin{equation*}
\begin{split}
&\big( \int_0^T \big\lVert \nabla \mathbf{u} \big\rVert_\infty^s dt \big)^{\frac{1}{s}} + \big( \int_0^T \big\lVert \nabla \mathbf{b} \big\rVert_\infty^s dt \big)^{\frac{1}{s}} + \big( \int_0^T \big\lVert \nabla \phi \big\rVert_\infty^s dt \big)^{\frac{1}{s}}
+\big( \int_0^T \big\lVert \nabla\mu \big\rVert_\infty^s dt \big)^{\frac{1}{s}}  \\
&\leq C \big( \int_0^T \big\lVert \sqrt{t} \nabla \mathbf{u} \big\rVert_{W_r^1}^s \frac{dt}{(\sqrt{t})^s} \big)^{\frac{1}{s}} + C \big( \int_0^T \big\lVert \sqrt{t} \nabla \mathbf{b} \big\rVert_{W_r^1}^s \frac{dt}{(\sqrt{t})^s} \big)^{\frac{1}{s}}\\&\quad + C \big( \int_0^T \big\lVert \sqrt{t} \nabla \phi \big\rVert_{W_r^1}^s \frac{dt}{(\sqrt{t})^s} \big)^{\frac{1}{s}}
+C \big( \int_0^T \big\lVert \sqrt{t} \nabla \mu \big\rVert_{W_r^1}^s \frac{dt}{(\sqrt{t})^s} \big)^{\frac{1}{s}} \\
&\leq C \big( \int_0^T t^{-\frac{ps}{2p - 2s}} dt \big)^{\frac{1}{s} - \frac{1}{p}} \big( \big\lVert \nabla \sqrt{t} \mathbf{u} \big\rVert_{L_p(0,T;W_r^1)} +big\lVert \nabla \sqrt{t} \mathbf{b} \big\rVert_{L_p(0,T;W_r^1)} \\&\quad+ \big\lVert \nabla \sqrt{t} \phi \big\rVert_{L_p(0,T;W_r^1)}+\big\lVert \nabla \sqrt{t} \mu \big\rVert_{L_p(0,T;W_r^1)}  \big) \\
&\leq C_{0} T^{-\frac{2p - 2s - ps}{2ps}},
\end{split}
\end{equation*}
which deduces that \eqref{eq-x} holds.
\vskip .2in
\subsection{Lagrangian approach}
\label{proofT}

In order to prove  the  uniqueness in  Theorems \ref{1.2} and \ref{1.3}, we first show the Lagrangian formulation of the system  \eqref{1.1}.
Now, we introduce the flow \( X : \mathbb{R}^+ \times \Omega \to \Omega\) of \( \mathbf{u} \) by
\begin{equation*}
\partial_t X(t, y) = \mathbf{u}(t, X(t, y)), \quad X(0, y) = y.
\end{equation*}
Note that
\begin{equation*}
X(t, y) = y + \int_0^t \mathbf{u}(\tau, X(\tau, y)) d\tau,
\end{equation*}
and
\begin{equation*}
\nabla_y X(t, y) = Id + \int_0^t \nabla_y \mathbf{u}(\tau, X(\tau, y)) d\tau.
\end{equation*}
In Lagrangian coordinates \((t, y)\), a solution \((\rho, \mathbf{u}, \mathbf{b}, \phi,\mu, P)\) to the system \eqref{1.1} recasts in \((\bar{\rho}, \bar{\mathbf{u}}, \bar{\mathbf{b}}, \bar{\phi},\bar{\mu}, \bar{P})\) with
\begin{equation}
\begin{split}
\bar{\rho}(t, y) &= \rho(t, X(t, y)), \quad \bar{\mathbf{u}}(t, y) = \mathbf{u}(t, X(t, y)), \quad \bar{\mathbf{b}}(t, y) = \mathbf{b}(t, X(t, y)),\\
\bar{\phi}(t, y) &= \phi(t, X(t, y)),\quad \bar{\mu}(t, y) = \mu(t, X(t, y)),\quad \bar{P}(t, y) = P(t, X(t, y)),
\end{split}
\end{equation}
and the triplet \((\bar{\rho}, \bar{\mathbf{u}}, \bar{\mathbf{b}}, \bar{\phi},\bar{\mu},\bar{P})\) thus satisfies
\begin{equation}
\begin{cases}
\bar{\rho} \bar{\mathbf{u}}_t - div_\mathbf{u}(\nu(\bar{\phi})\nabla_\mathbf{u}\bar{\mathbf{u}}+ \nabla_\mathbf{u} \bar{P} = \bar{\mathbf{b}}\cdot\nabla_\mathbf{u}\bar{\mathbf{b}}
-div_\mathbf{u}(\nabla_\mathbf{u}\bar{\mathbf{u}}\otimes\nabla_\mathbf{u}\bar{\mathbf{u}}), \label{eq-z}
\\
\operatorname{div}_\mathbf{u} \bar{\mathbf{u}} = 0 , \\
\bar{\mathbf{b}}_t-\bar{\mathbf{b}}\cdot\nabla_\mathbf{u}\bar{\mathbf{u}} = \Delta_\mathbf{u}\bar{\mathbf{b}},\\
\bar{\rho} \bar{\phi}_t = -\Delta_\mathbf{u}\bar{\phi},&  \\
\bar{\rho}_t = 0 , \\
\bar{\rho} \bar{\mu} = -\Delta_\mathbf{u}\bar{\phi}+\bar{\rho}(\bar{\phi}^3-\bar{\phi}),\\
\bar{\rho}(y, 0) = \rho_0(y), \bar{\mathbf{u}}(y, 0) = u_0(y),  \bar{\mathbf{b}}(y, 0) = b_0(y), \bar{\phi}(y, 0) = \phi_0(y) ,
\bar{\mu}(y, 0) = \mu_0(y),&
\end{cases}
\end{equation}
where operators \(\nabla_\mathbf{u}, \Delta_\mathbf{u}, \nabla_\mathbf{u} \operatorname{div}_\mathbf{u}\) and \(\operatorname{div}_\mathbf{u}\) correspond to the original operators \(\nabla, \Delta, \nabla \operatorname{div}\), respectively, after performing the change to the Lagrangian coordinates.
As pointed out in \cite{RB1, B2, PZZ}, in our regularity framework, that latter system \eqref{eq-z} is equivalent to the system \eqref{1.1}. Thanks to \eqref{eq-s} and \eqref{eq-x}, we can take the time $T$ to be small enough so that
\begin{equation}
\int_0^T \|\nabla \mathbf{u}\|_\infty d\tau \leq \frac{1}{2}.\label{eq-8}
\end{equation}
Set
\begin{align*}
A &= (\nabla X)^{-1} \text{(inverse of deformation tensor)}, \\
J &= \det \nabla X \text{(Jacobian determinant)}, \\
a &= JA \text{(transpose of cofactor matrix)}.
\end{align*}
Thus, in the \((t, y)\)-coordinates, operators \(\nabla\), \(\operatorname{div}\) and \(\Delta\) translate into
\begin{equation}
\nabla_u := {}^T\!A \nabla_y, \quad \operatorname{div}_\mathbf{u} := \operatorname{div}_y(A \cdot), \quad \text{and} \quad \Delta_\mathbf{u} := \operatorname{div}_\mathbf{u}\nabla_\mathbf{u}. \label{eq-6}
\end{equation}
Moreover, given some matrix \( N \), we define the divergence operator (acting on vector fields \( v \)) by the formulation
\begin{equation}
\operatorname{div}_\mathbf{u} N v = \operatorname{div}_y(N \cdot v) \stackrel{\text{def}}{=} {}^T\!N : \nabla v,
\end{equation}
where \( N : B = \sum_{i,j} N_{ij} B_{ji} \) for \( N = (N_{ij})_{1 \leq i,j \leq d} \) and \( B = (B_{ij})_{1 \leq i,j \leq d} \) two \( d \times d \) matrices.
Of course, if the condition \eqref{eq-8} is fulfilled then we have
\begin{equation}
A = \Big( Id + (\nabla_y X - Id) \Big)^{-1} = \sum_{k=0}^{+\infty} (-1)^k \big( \int_0^t \nabla_y \bar{\mathbf{u}}(\tau, \cdot) d\tau \big)^k,\label{eq-9}
\end{equation}
which yields that
\begin{equation}
\delta A = \big( \int_0^t \nabla \delta \mathbf{u} d\tau \big) \cdot \big( \sum_{k \geq 1} \sum_{0 \leq j < k} C_1^j C_2^{k - 1 - j} \big) \quad \text{with} \quad C_i(t) = \int_0^t \nabla \bar{\mathbf{u}}^i d\tau, \label{eq-7}
\end{equation}
where \( \delta A \stackrel{\text{def}}{=} A_2 - A_1 \) and \( \delta \mathbf{u} \stackrel{\text{def}}{=} \bar{\mathbf{u}}^2 - \bar{\mathbf{u}}^1 \). From \eqref{eq-8} and the convergence property of series \eqref{eq-9}, we known that $A$ is bounded.
Finally, we shall prove   Theorem \ref{1.2}. Let $(\rho^1, \mathbf{u}^1, \mathbf{b}^1, \phi^1,\mu^1, P^1)$  and $(\rho^2, \mathbf{u}^2, \mathbf{b}^2, \phi^2,\mu^2,P^2)$ be two solutions of the system \eqref{1.1} fulfilling the properties of Theorem \ref{1.1} with the same initial data, and denote by \((\bar{\rho}^1, \bar{\mathbf{u}}^1, \bar{\mathbf{b}}^1, \bar{\phi}^1,\bar{\mu}^1,  \bar{P}^1)\) and \((\bar{\rho}^2, \bar{\mathbf{u}}^2, \bar{\mathbf{b}}^2, \bar{\phi}^2, \bar{\mu}^2, \bar{P}^2)\) in Lagrangian coordinates. Of course, we have \(\bar{\rho}^1 = \bar{\rho}^2 = \rho_0\), which explains the choice of our approach here. In what follows, we shall use repeatedly the fact that for \( i = 1,2 \),
\begin{equation}
\begin{split}\label{eq-5}
&t^{\frac{1}{2}} \nabla \bar{\mathbf{u}}^i \in L^2(0,T; L^\infty), t^{\frac{1}{2}} \nabla \bar{\mathbf{b}}^i \in L^2(0,T; L^\infty),
t^{\frac{1}{2}} \nabla \bar{\phi}^i \in L^2(0,T; L^\infty), \\
&t^{\frac{1}{2}} \nabla \bar{\mu}^i \in L^2(0,T; L^\infty),
t^{\frac{1}{2}} \nabla \bar{P}^i \in L^2(0,T; L^4),
t^{\frac{1}{2}} \bar{\mathbf{u}}_t^i \in L^{4/3}(0,T; L^6), \\
&\nabla \bar{\mathbf{u}}^i \in L^1(0,T; L^\infty) \cap L^2(0,T; L^6) \cap L^4(0,T; L^3),\, \nabla \bar{\phi}^i \in L^1(0,T; L^\infty) \cap L^\infty(0,T; w^{1,6}) ,\\
&\nabla \bar{\mu}^i \in L^1(0,T; L^\infty) \cap L^2(0,T; w^{1,6}) ,
\bar{\mathbf{u}}^i \in L^4(0,T; L^\infty),\bar{\phi}^i \in L^4(0,T; L^\infty),\bar{\mu}^i \in L^4(0,T; L^\infty),\\
&t^{\frac{1}{2}} \bar{\mathbf{b}}_t^i \in L^{4/3}(0,T; L^6), \, \nabla \bar{\mathbf{b}}^i \in L^1(0,T; L^\infty) \cap L^2(0,T; L^6) \cap L^4(0,T; L^3), \bar{\mathbf{b}}^i \in L^4(0,T; L^\infty).
\end{split}
\end{equation}
It should be noted that the terms in \eqref{eq-5} is less than or equal to \( c(T) \), where \( c(T) \) designates a nonnegative continuous increasing function of \( T \), with \( c(0) = 0 \) and \( c(T) \to 0 \) when \( T \to 0 \). By Theorem \ref{1.1}, \eqref{eq-s} and $\|A_i\|_{\infty} < \infty\ (i = 1, 2)$, we get
$$\nabla \bar{\phi}^i \in L^1(0,T; L^\infty) \cap L^\infty(0,T; w^{1,6}) ,\quad
\nabla \bar{\mu}^i \in L^1(0,T; L^\infty) \cap L^2(0,T; w^{1,6}).$$
Denoting
\begin{align*}
\delta u \stackrel{\text{def}}{=} \bar{\mathbf{u}}^2 - \bar{\mathbf{u}}^1, \quad
\delta b \stackrel{\text{def}}{=} \bar{\mathbf{b}}^2 - \bar{\mathbf{b}}^1, \quad
\delta \phi \stackrel{\text{def}}{=} \bar{\phi}^2 - \bar{\phi}^1, \quad \delta\mu \stackrel{\text{def}}{=} \bar{\mu}^2 - \bar{\mu}^1
\quad\text{and} \quad \delta P \stackrel{\text{def}}{=} \bar{P}^2 - \bar{P}^1,
\end{align*}
we get
\begin{equation}
\begin{cases}
\rho_0 \, \delta \mathbf{u}_t - \mathop{\mathrm{div}_{\mathbf{u}^1}} \left( \nu(\bar{\phi}^1) \cdot \nabla_{\mathbf{u}^1} \delta \mathbf{u} + \nabla_{\mathbf{u}^1} \delta P \right.
- \rho_0 \, \bar{\mu}^1 \, \nabla_{\mathbf{u}^1} \delta \phi-\bar{\mathbf{b}^1}\cdot\nabla_{\mathbf{u}^1} \delta \mathbf{b}\\
+ \rho_0 \left( |\bar{\phi}^1|^3 - \bar{\phi}^1 \right) \, \nabla_{\mathbf{u}^1} \delta \phi
+ \nabla_{\mathbf{u}^1}^2 \bar{\phi}^1 \cdot \nabla_{\mathbf{u}^1} \delta \phi = \delta f_1, \\
\mathop{\mathrm{div}_{\mathbf{u}^1}} \delta \mathbf{u} = (\mathop{\mathrm{div}_{\mathbf{u}^1}} - \mathop{\mathrm{div}_{\mathbf{u}^2}}) \bar{\mathbf{u}}^2, \\
\delta \mathbf{b}_t -\bar{\mathbf{b}^1}\cdot\nabla_{\mathbf{u}^1} \delta \mathbf{u}-\Delta_{\mathbf{u}^1}\delta \mathbf{b}=\delta f_4,\\
\rho_0 \, \delta \phi_t - \Delta_{\mathbf{u}^1} \delta \mu = \delta f_2, \\
\rho_0 \, \delta \mu + \Delta_{\mathbf{u}^1} \delta \phi - \rho_0 \left( \delta \phi^3 - \delta \phi \right) = \delta f_3, \\
(\delta \mathbf{u}, \delta \mathbf{b}, \delta \phi, \delta \mu) \big\vert_{t=0}= (0, 0)
\end{cases}
\end{equation}
with
\begin{align*}
&\delta f_1 \stackrel{\text{def}}{=} \big[ \mathrm{div}_{\mathbf{u}^2} (\nu(\bar{\phi}^1) \nabla _{\mathbf{u}^2} - \mathrm{div}_{\mathbf{u}^1}(\nu(\bar{\phi}^1) \nabla_{\mathbf{u}^1}) \big] \bar{\mathbf{u}}^2 - (\nabla_{\mathbf{u}^2} - \nabla_{\mathbf{u}^1}) \bar{P}^2 + (\rho_0 \bar{\mu}^2 \nabla_{\mathbf{u}^2} - \rho_0 \bar{\mu}^1 \nabla_{\mathbf{u}^1}) \bar{\phi}^2\\
&+(\bar{\mathbf{b}}^2 \nabla_{\mathbf{u}^2}-\bar{\mathbf{b}}^1 \nabla_{\mathbf{u}^1})\bar{\mathbf{b}}^2 - \rho_0 \big( |\bar{\phi}^2|^3 \nabla_{\mathbf{u}^2} - |\bar{\phi}^1|^3 \nabla_{\mathbf{u}^1} \big) \bar{\phi}^2 + \rho_0 \big( \bar{\phi}^2 \nabla_{\mathbf{u}^2} - \bar{\phi}^1 \nabla_{\mathbf{u}^1}\big) \bar{\phi}^2 - \big( \nabla^2_{\mathbf{u}^2} \bar{\phi}^2 \nabla_{\mathbf{u}^2} - \nabla^2_{\mathbf{u}^1} \bar{\phi}^1 \nabla_{\mathbf{u}^1}\big) \bar{\phi}^2
\end{align*}
\begin{align*}
\delta f_2 \stackrel{\text{def}}{=} (\Delta_{\mathbf{u}^2} - \Delta_{\mathbf{u}^1}) \bar{\mu}^2, \quad \delta f_3 \stackrel{\text{def}}{=} -(\Delta_{\mathbf{u}^2} - \Delta_{\mathbf{u}^1}) \bar{\phi}^2,  \quad \delta f_4 \stackrel{\text{def}}{=} (\bar{\mathbf{b}}^2 \nabla_{\mathbf{u}^2}-\bar{\mathbf{b}}^1 \nabla_{\mathbf{u}^1})\bar{\mathbf{u}}^2+(\Delta_{\mathbf{u}^2} - \Delta_{\mathbf{u}^1}) \bar{\mathbf{b}}^2.
\end{align*}
We claim for sufficiently small \( T > 0 \)
\begin{equation*}
\begin{split}
\int_0^T &\int_{\Omega} \bigl(|\delta \mathbf{u}(t, y)|^2 +|\delta \mathbf{b}(t, y)|^2 +  |\delta \phi(t, y)|^2+|\delta \mu(t, y)|^2 \\&+ |\nabla \delta \mathbf{u}(t, y)|^2 +|\nabla \delta \mathbf{b}(t, y)|^2  + |\nabla \delta \phi(t, y)|^2\bigr) \, dy \, dt = 0.
\end{split}
\end{equation*}
To prove our claim, we first decompose \( \delta \mathbf{u} \) into:
\begin{align}
\delta \mathbf{u} = \varphi + \psi,
\end{align}
where \( \varphi \) is the solution given by Lemma \ref{lemma:4.1} to the following problem:
\begin{align}
\mathop{\mathrm{div}_{\mathbf{u}^1}} \varphi = (\mathop{\mathrm{div}_{\mathbf{u}^1}} - \mathop{\mathrm{div}_{\mathbf{u}^2}}) \bar{\mathbf{u}}^2 \nonumber= \mathop{\mathrm{div}}(\delta A \bar{\mathbf{u}}^2).
\end{align}
Then \eqref{4.9} and \eqref{eq-9} ensure that there exist two universal positive constants \( c \) and \( C \) such that if
\begin{align}
\|\nabla \bar{\mathbf{u}}^1\|_{L^1(0,T; L^\infty)} + \|\nabla \bar{\mathbf{u}}^1\|_{L^2(0,T; L^6)} &\leq c,\label{eq-q}
\end{align}
and then the following inequalities hold true:
\begin{align}
&\|\varphi\|_{L^4(0,T;L^2)} \leq C\|\delta A \bar{\mathbf{u}}^2\|_{L^4(0,T;L^2)}, \|\nabla \varphi\|_{L^2(0,T;L^2)} \leq C\|^T \delta A : \nabla \bar{\mathbf{u}}^2\|_{L^2(0,T;L^2)}, \notag\label{eq-a}, \\
&\|\varphi_t\|_{L^{4/3}(0,T;L^{3/2})} \leq C\|\delta A \bar{\mathbf{u}}^2\|_{L^4(0,T;L^2)} + C\|(\delta A \bar{\mathbf{u}}^2)_t\|_{L^{4/3}(0,T;L^{3/2})}.
\end{align}
Now, let us bound these terms in the right hand side  of \eqref{eq-a}. Regarding $^T \delta A : \nabla \bar{\mathbf{u}}^2$, it follows from H\"{o}lder's inequality, \eqref{eq-7} and \eqref{eq-q} that
\begin{align}
\sup_{t \in [0,T]} \|t^{-1/2} \delta A\|_2 &\leq C \sup_{t \in [0,T]} \big\|t^{-1/2} \int_0^t \nabla \delta \mathbf{u} d\tau \big\|_2 \leq C\|\nabla \delta \mathbf{u}\|_{L^2(0,T;L_2)}.\label{eq-4}
\end{align}
According to \eqref{eq-5} and \eqref{eq-4}, we obtain
\begin{align}
\|^T \delta A : \nabla \bar{\mathbf{u}}^2\|_{L^2(0,T \times \mathbb{T}^d)} &\leq \sup_{t \in [0,T]} \|t^{-1/2} \delta A\|_2 \|t^{1/2} \nabla \bar{\mathbf{u}}^2\|_{L^2(0,T;L^\infty)} \notag \\
&\leq c(T)\|\nabla \delta \mathbf{u}\|_{L^2(0,T;L^2)}.
\end{align}
Similarly, we also have
\begin{align}
\|\delta A \bar{\mathbf{u}}^2\|_{L^4(0,T;L^2)} &\leq \|t^{-1/2} \delta A\|_{L^\infty(0,T;L^2)} \|t^{1/2} \bar{\mathbf{u}}^2\|_{L^4(0,T;L^\infty)}.
\end{align}
Using \eqref{eq-5}, \eqref{eq-a} and \eqref{eq-4} yields that
\begin{align}
\|\nabla \varphi\|_{L^2(0,T;L^2)} &\leq c(T)\|\nabla \delta \mathbf{u}\|_{L^2(0,T;L^2)},\label{eq-b}
\end{align}
and
\begin{align}
\|\varphi\|_{L^4(0,T;L^2)} &\leq c(T)\|\nabla \delta \mathbf{u}\|_{L^2(0,T;L^2)}.
\label{eq-c}
\end{align}
In order to bound $\varphi_t$, it suffices to derive an appropriate estimate in $L^{4/3}(0,T;L^{3/2})$ for
\begin{align*}
(\delta A \bar{\mathbf{u}}^2)_t = \delta A \bar{\mathbf{u}}_t^2 + (\delta A)_t \bar{\mathbf{u}}^2.
\end{align*}
Thanks to \eqref{eq-5} and \eqref{eq-4}, we have
\begin{align*}
\|\delta A \bar{\mathbf{u}}_t^2\|_{L^{4/3}(0,T;L^{3/2})} &\leq \|t^{-1/2} \delta A\|_{L^\infty(0,T;L^2)} \|t^{1/2} \nabla \bar{\mathbf{u}}_t^2\|_{L^{4/3}(0,T;L^6)} \notag \\
&\leq c(T)\|\nabla \delta \mathbf{u}\|_{L^2(0,T;L^2)}.
\end{align*}
For the term $(\delta A)_t \bar{\mathbf{u}}^2$, it follows from Holder's inequality that
\begin{align*}
\|(\delta A)_t \bar{\mathbf{u}}^2\|_{L^{4/3}(0,T;L^{3/2})} &\leq \|(\delta A)_t\|_{L^2(0,T \times \mathbb{T}^d)} \|\bar{\mathbf{u}}^2\|_{L^4(0,T;L^6)}.
\end{align*}
Furthermore, differentiating \eqref{eq-7} with respect to \( t \) and using \eqref{eq-q} for \( \bar{\mathbf{u}}^1 \) and \( \bar{\mathbf{u}}^2 \) yield that
\begin{align*}
\| (\delta A)_t \|_2 &\leq C \Big( \| \nabla \delta \mathbf{u} \|_2 + \big\| t^{-1/2} \int_0^t \nabla \delta \mathbf{u} \, dr \big\|_2 \big( \| t^{1/2} \nabla \bar{\mathbf{u}}^1 \|_\infty + \| t^{1/2} \nabla \bar{\mathbf{u}}^2 \|_\infty \big) \Big),
\end{align*}
which implies that  $$ \| (\delta A)_t \|_{L^2(0,T \times \mathbb{T}^d)} \leq C \| \nabla \delta \mathbf{u} \|_{L^2(0,T \times \mathbb{T}^d)}.$$
Combining this with \eqref{eq-5} yields  that
\begin{align}
\| (\delta A)_t \bar{\mathbf{u}}^2 \|_{L^{4/3}(0,T; L^{3/2})} &\leq c(T) \| \nabla \delta \mathbf{u} \|_{L^2(0,T \times \mathbb{T}^d)},
\label{eq-3}\\ \notag
\text{and thus} \quad \| \varphi_t \|_{L^{4/3}(0,T; L^{3/2})} &\leq c(T) \| \nabla \delta \mathbf{u} \|_{L^2(0,T; L^2)}.
\end{align}
Collecting results from \eqref{eq-a}, \eqref{eq-b}, \eqref{eq-c} and \eqref{eq-3}, we obtain
\begin{equation}
\| \varphi \|_{L^4(0,T; L^2)} + \| \nabla \varphi \|_{L^2(0,T \times \mathbb{T}^d)} + \| \varphi_t \|_{L^{4/3}(0,T; L^{3/2})} \leq c(T) \| \nabla \delta \mathbf{u} \|_{L^2(0,T \times \mathbb{T}^d)}.\label{eq-1}
\end{equation}
Next, let us restate the equations for $(\delta \mathbf{u}, \delta \mathbf{b}, \delta \phi,\delta \mu, \delta P)$ as the following system for $(\psi, \delta \mathbf{b}, \delta \phi,\delta\mu, \delta P)$:
\begin{equation}
\begin{cases}
\rho_0 \,\psi_t - \mathop{\mathrm{div}_{\mathbf{u}^1}} \left( \nu(\bar{\phi}^1) \, \nabla_{\mathbf{u}^1} \psi + \nabla_{\mathbf{u}^1} \delta P \right.
= \delta f_1-\rho_0 \,\varphi_t+\mathop{\mathrm{div}_{\mathbf{u}^1}} \left( \nu(\bar{\phi}^1) \, \nabla_{\mathbf{u}^1} \varphi \right)+\rho_0 \, \bar{\mu}^1 \, \nabla_{\mathbf{u}^1} \delta \phi, \label{eq-10}\\
\bar{\mathbf{b}}^1\nabla_{\mathbf{u}^1} \delta\mathbf{b}- \rho_0 \left( |\bar{\phi}^1|^3 + \bar{\phi}^1 \right) \, \nabla_{\mathbf{u}^1} \delta \phi
-\nabla_{\mathbf{u}^1}^2 \bar{\phi}^1 \, \nabla_{\mathbf{u}^1} \delta \phi, \\
\mathop{\mathrm{div}_{\mathbf{u}^1}} \psi = 0, \\
\delta \mathbf{b}_t -\bar{\mathbf{b}^1}\cdot\nabla_{\mathbf{u}^1} \mathbf{\psi}-\Delta_{\mathbf{u}^1}\delta \mathbf{b}=\bar{\mathbf{b}}^1\nabla_{\mathbf{u}^1} \mathbf{\psi}+\delta f_4,\\
\rho_0 \, \delta \phi_t - \Delta_{\mathbf{u}^1} \delta \mu = \delta f_2, \\
\rho_0 \, \delta \mu + \Delta_{\mathbf{u}^1} \delta \phi - \rho_0 \left( \delta \phi^3 - \delta \phi \right) = \delta f_3, \\
(\Delta \mathbf{u}, \delta \phi, \delta \mu) \big\vert_{t=0}= (0, 0).
\end{cases}
\end{equation}
Due to $\operatorname{div}_{\mathbf{u}^1} \psi = 0$, we have
\[
\int_{\Omega} (\nabla_{\mathbf{u}^1} \delta P) \cdot \phi \, dx = - \int_{\Omega} \operatorname{div}_{\mathbf{u}^1} \phi \cdot \delta P \, dx = 0.
\]
Taking the \( L^2 \)-scalar product of the first equation in system \eqref{eq-10} with \( \psi \) , the third equation with \( \delta\mathbf{b} \), the fourth equation with \( \delta \phi \) and the fifth equation with \( \delta \mu \) respectively, finally adding the three equations together, we infer that
\begin{equation}
\begin{split}
&\frac{1}{2} \frac{d}{dt} \int_{\Omega}|\delta\mathbf{b}|^2+ \rho_0 \big( |\psi|^2 + |\delta \phi|^2 \big) dx + \int_{\Omega} \big( \nu(\bar{\phi}^1) |\nabla_{\mathbf{u}^1} \psi|^2 +|\nabla_{\mathbf{u}^1}\delta\mathbf{b}|^2 + \rho_0 |\ \delta \mu|^2 \big) dx \\
&\leq -\int_\Omega \rho_0 \partial_t \varphi \cdot \psi \, dx + \int_\Omega \Dv_{\mathbf{u}^1} (\nu(\bar{\phi^1}) \nabla_{\mathbf{u}^1} \varphi) \cdot \psi \, dx + \int_\Omega \rho_0 \bar{\mu}_1 \nabla_{\mathbf{u}^1} \delta\phi \, \psi \, dx \\
&\quad-\int_\Omega \rho_0 (|\bar{\phi}^1|^3 - \bar{\phi}^1) \nabla_{\mathbf{u}^1} \delta\phi \, \psi \, dx - \int_\Omega\nabla^2_{\mathbf{u}^1} \bar{\phi^1} \nabla_{\mathbf{u}^1} \delta\phi\, \psi \, dx\\
&\quad+ \int_\Omega (\nu(\bar{\phi^1}) \nabla_{\mathbf{u}^2} \bar{\mathbf{u}}^2 \nabla_{\mathbf{u}^2}\psi - \nu(\bar{\phi^1}) \nabla_{\mathbf{u}^1} \bar{u}^2 \nabla_{u^1}\psi) \, dx \\
&\quad-\int_\Omega (\nabla_{\mathbf{u}^2} - \nabla_{\mathbf{u}^1}) \bar{P}^2 \, \psi \, dx + \int_\Omega (\rho_0 \bar{\mu}^2 \nabla_{\mathbf{u}^2} - \rho_0 \bar{\mu}^1 \nabla_{\mathbf{u}^1}) \bar{\phi}^2 \, \psi \, dx \\
&\quad-\int_\Omega \rho_0 (|\bar{\phi}^2|^3 \nabla_{\mathbf{u}^2} - |\bar{\phi}^1|^3 \nabla_{\mathbf{u}^1}) \bar{\phi}^2 \, \psi \, dx + \int_\Omega \rho_0 (\bar{\phi}^2 \nabla_{\mathbf{u}^2} - \bar{\phi}^1 \nabla_{\mathbf{u}^1}) \bar{\phi}^2 \, \psi \, dx \\
&\quad-\int_\Omega (\nabla_{\mathbf{u}^2}^2 \bar{\phi}^2 \nabla_{\mathbf{u}^2} - \nabla_{\mathbf{u}^1}^2 \bar{\phi}^1 \nabla_{\mathbf{u}^1}) \bar{\phi}^2 \, \psi \, dx + \int_\Omega \Delta_{\mathbf{u}^1} \delta\mu \delta\phi \, dx + \int_\Omega (\Delta_{\mathbf{u}^2} - \Delta_{\mathbf{u}^1}) \bar{\mu}^2 \delta\phi \, dx \\
&\quad-\int_\Omega \Delta_{\mathbf{u}^1} \delta\phi \delta \mu \, dx + \int_\Omega \rho_0 (\delta\phi^3 - \delta\phi) \delta \mu \, dx - \int_\Omega (\Delta_{\mathbf{u}^2} - \Delta_{\mathbf{u}^1}) \bar{\phi}^2 \delta \mu \, dx\\
&\quad+\int_\Omega (\bar{\mathbf{b}}^2\nabla_{\mathbf{u}^2}-\bar{\mathbf{b}}^1\nabla_{\mathbf{u}^1}) \bar{\mathbf{b}}^2\cdot \mathbf{\psi}dx +\int_\Omega (\bar{\mathbf{b}}^2\nabla_{\mathbf{u}^2}-\bar{\mathbf{b}}^1\nabla_{\mathbf{u}^1}) \bar{\mathbf{u}}^2\cdot \mathbf{\psi}dx\\
&\quad+\int_\Omega \bar{\mathbf{b}}^1\cdot\nabla_{\mathbf{u}^1}\mathbf{\psi}\cdot\mathbf{\psi}dx+
\int_\Omega\nabla_{\mathbf{u}^1}\bar{\mathbf{b}}^2\cdot\nabla_{\mathbf{u}^1} \delta\mathbf{b}dx-\int_\Omega\nabla_{\mathbf{u}^2}\bar{\mathbf{b}}^2
\cdot\nabla_{\mathbf{u}^2} \delta\mathbf{b}dx
\\&\triangleq\sum_{i = 1}^{21} M_i
\end{split}
\end{equation}
Here and in what follows, we shall estimate term by term above. For \( M_1 \), it follows from Holder's inequality that
\begin{align*}
\int_0^T M_1(t)dt \leq \| \rho_0 \|_\infty^{3/4} \| \varphi_t \|_{L^{4/3}(0,T; L^{3/2})} \| \rho_0^{1/4} \psi \|_{L^4(0,T; L^3)}.
\end{align*}
Using Holder's inequality and the Sobolev embedding \( H^1(\Omega) \hookrightarrow L^6(\Omega) \) yields that
\begin{align*}
\| \rho_0^{1/4} \psi \|_{L^4(0,T; L^3)} \leq \| \sqrt{\rho_0} \psi \|_{L^\infty(0,T; L^2)}^{1/2} \| \psi \|_{L^2(0,T; L^6)}^{1/2} \leq C \| \sqrt{\rho_0} \psi \|_{L^\infty(0,T; L^2)}^{1/2} \| \psi \|_{L^2(0,T; H^1)}^{1/2}.
\end{align*}
Taking advantage of \eqref{eq-7} and \eqref{eq-1}, we conclude that
\begin{align*}
\int_0^T M_1(t)dt \leq c(T) \big( \| \sqrt{\rho_0} \psi \|_{L^\infty(0,T; L^2)} + \| \nabla \psi \|_{L^2(0,T \times \Omega} \big)^{1/2} \| \sqrt{\rho_0} \psi \|_{L^\infty(0,T; L^2)}^{1/2} \| \nabla \delta \mathbf{u} \|_{L^2(0,T \times \Omega)}.
\end{align*}
For \( M_2 \), it follows from integrating by parts and using \eqref{eq-1}, that
\begin{align*}
\int_0^T M_2(t)dt &\leq \nu^* \int_0^T \big| \int_{\Omega} \nabla_{\mathbf{u}^1} \varphi \nabla_{\mathbf{u}^1} \psi dx \big| dt \\
&\leq \nu^* \int_0^T \int_{\Omega} |\nabla_{u^1} \varphi| |\nabla_{\mathbf{u}^1} \psi| dxdt \\
&\leq \frac{\nu^*}{2} \int_0^T \|\nabla_{\mathbf{u}^1}\psi\|_2^2 dt + \frac{\nu^*}{2} \int_0^T \|\nabla_{\mathbf{u}^1} \varphi\|_2^2 dt \\
&\leq \frac{\nu^*}{2} \int_0^T \|\nabla_{\mathbf{u}^1} \psi\|_2^2 dt + c(T) \int_0^T \|\nabla \delta \mathbf{u}\|_2^2 dt.
\end{align*}
For \( M_3 \) and \( M_4 \),  it follows from H\"{o}lder's inequality that
\begin{align*}
\int_{0}^{t} M_{3}(t) dt &\leq \|\sqrt\rho_0 \psi\|_{L^{\infty}(0,T; L^2(\Omega)} \big( \|\nabla \bar{\phi^1}\|_{L^2(0,T; L^4(\Omega)} + \|\nabla \bar{\phi^2}\|_{L^2(0,T; L^4(\Omega)} \big) \|\bar{\mu^1}\|_{L^2(0,T; L^4(\Omega)}, \\
\int_{0}^{t} M_{4}(t) dt &\leq \|\sqrt\rho_0 \psi\|_{L^{\infty}(0,T; L^2(\Omega))} \big( \|\nabla \bar{\phi^1}\|_{L^2(0,T; L^4(\Omega))} + \|\nabla \bar{\phi^2}\|_{L^2(0,T; L^4(\Omega)} \big)\\
&\quad\times\big( \||\nabla \bar{\phi^1}|^{3}\|_{L^2(0,T; L^4(\Omega)} + \|\bar{\phi^1}\|_{L^2(0,T; L^4(\Omega)} \big).
\end{align*}
Thanks to \eqref{eq-8} and \eqref{eq-9}, we get
\begin{align*}
\| \nabla A \|_{L^\infty (0,T; L^3(\Omega)} &\leq C \| \nabla^2 \mathbf{u}(t, X(\tau, \cdot)) \|_{L^1(0,T; L^3(\Omega)} \| \nabla_y X \|_{L_t^\infty (L_y^\infty)} \leq C t^{\frac{1}{4}},\\
\int_{0}^{t} M_{5}(t) dt &\leq\int_{0}^{t} \bigl\lvert (A_1^T \nabla A_1^T \bar{\phi^1} + |A_1^T|^2 \nabla^2 \bar{\phi^1}) \, |A_1^T \psi \, (|\nabla \bar{\phi^2}| + |\nabla \bar{\phi^1}|) \bigr\rvert \, dt \\
&\leq C \rho_0 \,\|\sqrt\rho_0 \psi\|_{L^\infty(0,T; L^2(\Omega)}\big(\|\nabla A_1^T\|_{L^\infty(0,T; L^3(\Omega)} \,\|A_1^T\|_{L^\infty(0,T; L^\infty(\Omega)}^2
\|\bar{\phi^1}\|_{L^\infty(0,T\times\Omega)}\\
&\quad+\|A_1^T\|_{L^\infty(0,T; L^\infty(\Omega)}^3\,
\|\nabla^2 \bar{\phi^1}\|_{L^2(0,T; L^3(\Omega)} \,\big)\big( \|\nabla \bar{\phi^2}\|_{L^2(0,T; L^{6}(\Omega)} + \|\nabla \bar{\phi^1}\|_{L^2(0,T; L^{6}(\Omega)} \big).
\end{align*}
For \( M_6 \), using \eqref{eq-6} and \eqref{eq-7}, we deduce that
\begin{align*}
M_6 &\leq C \big| \int_{\Omega}\Dv\big((\delta A^T A_2 + A_1^T \delta A)\nabla \bar{\mathbf{u}}^2\big) \cdot \psi \, dx \big| \\
&\leq C \int_{\Omega} \big| \delta A^T A_2 + A_1^T \delta A \big| \| \nabla \bar{\mathbf{u}}^2 \| |\nabla \psi| \, dx \\
&\leq C \| t^{-1/2} \delta A \|_2 \| t^{1/2} \nabla \bar{\mathbf{u}}^2 \|_\infty \| \nabla \psi \|_2,
\end{align*}
which together with \eqref{eq-5} and \eqref{eq-4} implies that
\begin{align*}
\int_0^T M_6(t) dt &\leq \| t^{-1/2} \delta A \|_{L^\infty(0,T; L^2)} \| t^{1/2} \nabla \bar{\mathbf{u}}^2 \|_{L^2(0,T; L^\infty)} \| \nabla \psi \|_{L^2(0,T \times \Omega)} \\
&\leq c(T) \| \nabla \delta \mathbf{u} \|_{L^2(0,T; L^2)} \| \nabla \psi \|_{L^2(0,T \times \Omega)}.
\end{align*}
For \( M_7 \), using Hl\"{o}der's inequality, we obtain
\begin{align*}
M_7(t) \leq \big| \int_{\Omega} \delta A \nabla \bar{P}^2 \psi \, dx \big| \leq C \| t^{-1/2} \delta A \|_2 \| t^{1/2} \nabla \bar{P}^2 \|_3 \| \psi \|_6.
\end{align*}
It then follows from \eqref{eq-5}, \eqref{eq-4} and Sobolev embedding that
\begin{align*}
\int_0^T M_7(t) dt &\leq \| t^{-1/2} \delta A \|_{L^\infty(0,T; L^2)} \| t^{1/2} \nabla \bar{P}^2 \|_{L^2(0,T; L^3)} \| \psi \|_{L^2(0,T; H^1)} \\
&\leq C(T) \| \nabla \delta \mathbf{u}\|_{L^2(0,T; L^2)} \big( \| \sqrt{\rho_0} \psi \|_{L^\infty(0,T; L^2(\Omega))} + \| \nabla \psi \|_{L^2(0,T \times \Omega)} \big),\\
\int_{0}^{T} M_8(t) \, dt &\leq \|\rho_0 \psi\|_{L^\infty(0,T; L^2(\Omega)} \, (\|\bar{\mu}^2\|_{L^2(0,T; L^4(\Omega))} \|\nabla A_2^T\|_{L^\infty(0,T\times \Omega)}
\\ &\quad+ \|\bar{\mu}^1\|_{L^2(0,T; L^4(\Omega))} \|\nabla A_1^T\|_{L^\infty(0,T\times \Omega)})\|\nabla\bar{\phi}^2\|_{L^2(0,T; L^4(\Omega)},\\
\int_0^T M_9(t) \, dt &\leq \| \rho_0 \psi \|_{L^\infty(0,T;L^2(\Omega)} \big( \| A_2^T \|_{L^\infty(0,T\times\Omega)} \| |\bar{\phi^2}|^3 \|_{L^2(0,T;L^4(\Omega)} \big). \\
&\quad+ \| A_1^T \|_{L^\infty(0,T\times\Omega)} |\bar{\phi^1}|^3 \|_{L^2(0,T;L^4(\Omega)} \big( \| \nabla \bar{\phi^2}\|_{L^2(0,T;L^4(\Omega)} \big) \\
\int_0^T M_{10}(t) \, dt &\leq \|\rho_0 \psi \|_{L^\infty(0,T;L^2(\Omega)} \big( \| A_2^T \|_{L^\infty(0,T\times\Omega)} \| \bar{\phi^2} \|_{L^2(0,T;L^4(\Omega)} \big).\\
&\quad+ \| A_1^T \|_{L^\infty(0,T\times\Omega)} \|\bar{\phi^1} \|_{L^2(0,T;L^4(\Omega)} \big( \| \nabla \bar{\phi^2} \|_{L^2(0,T;L^4(\Omega)} \big), \\
\int_0^T M_{11}(t) \, dt &\leq C_0 \| \rho_0 \psi \|_{L^\infty(0,T;L^2(\Omega)}\| \nabla \bar{\phi^2} \|_{L^1(0,T;L^\infty(\Omega)} \\
 &\quad \times\big( \| \nabla A_2^T \|_{L^\infty(0,T;L^3(\Omega)} \| A_2^T \|_{L^\infty(0,T\times\Omega)}^2 | \nabla \bar{\phi^2} \|_{L^\infty(0,T;L^6(\Omega)}
 \\&\quad + \| \nabla A_1^T \|_{L^\infty(0,T;L^3(\Omega)}\| A_1^T \|_{L^\infty(0,T\times\Omega)}^3 \| \nabla^2 \bar{\phi}^1 \|_{L^\infty(0,T;L^6(\Omega)} \big),\\
\int_{0}^{T} M_{12}(t) \, dt &\leq \| A_{1}^T \|_{L^{\infty}(0,T \times \Omega)}^2 \big( \| \nabla \bar{\mu}^2 \|_{L^2(0,T \times \Omega)} + \| \nabla \bar{\mu}^1 \|_{L^2(0,T \times \Omega)} \big) \\
&\quad \times \big( \| \nabla \bar{\phi}^2 \|_{L^2(0,T \times \Omega)} + \| \nabla \bar{\phi}^1 \|_{L^2(0,T \times \Omega)} \big),\\
\int_{0}^{T} M_{13}(t) \, dt &\leq \big( \| A_2^T \|_{L^{\infty}(0,T \times \Omega)}^2 + \| A_1^T \|_{L^{\infty}(0,T \times \Omega)}^2 \big) \| \nabla \bar{\mu}^2 \|_{L^2(0,T \times \Omega)}\\&\quad\times \big( \| \nabla \bar{\phi}^2 \|_{L^2(0,T \times \Omega)} + \| \nabla \bar{\phi}^1 \|_{L^2(0,T \times \Omega)} \big), \\
\int_{0}^{T} M_{14}(t) \, dt &\leq \| A_1^T \|_{L^{\infty}(0,T \times \Omega)}^2 \big( \| \nabla \bar{\phi}^2 \|_{L^2(0,T \times \Omega)} + \| \nabla \bar{\phi}^1 \|_{L^2(0,T \times \Omega)} \big)\\&\quad\times \big( \| \nabla \bar{\mu}^2 \|_{L^2(0,T \times \Omega)} + \| \nabla \bar{\mu}^1 \|_{L^2(0,T \times \Omega)} \big), \\
\int_{0}^{T} M_{15}(t) dt &\leq \big( \| |\bar{\phi}^2|^3 \|_{L^2(0,T \times \Omega)} + \| |\bar{\phi}^1|^3 \|_{L^2(0,T \times \Omega)} \big) \| \sqrt{\rho_0} \delta \mu \|_{L^2(0,T \times \Omega)} \\
&\quad + (\| \sqrt{\rho_0}\bar{\phi}^2 \|_{L^{\infty}; L^2(\Omega)}+\| \sqrt{\rho_0}\bar{\phi}^1 \|_{L^{\infty}; L^2(\Omega)})+ \big( \| \bar{\mu}^2 \|_{L^1(0,T; L^2(\Omega))} + \| \bar{\mu}^1 \|_{L^1(0,T; L^2(\Omega))} \big), \\
\int_{0}^{T} M_{16}(t) dt &\leq \big( \| A_2^T \|_{L^{\infty}(0,T \times \Omega)}^2 + \| A_1^T \|_{L^{\infty}(0,T \times \Omega)}^2 \big)
\| \nabla \bar{\phi}^2 \|_{L^2(0,T \times \Omega)} \\&\quad+ \big(\| \nabla \bar{\mu}^2 \|_{L^2(0,T \times \Omega)}+\| \nabla \bar{\mu}^1 \|_{L^2(0,T \times \Omega)} \big),\\
\int_{0}^{T} M_{17}(t) dt &\leq \big( \| A_2^T \|_{L^{\infty}(0,T \times \Omega)}^2 + \| A_1^T \|_{L^{\infty}(0,T \times \Omega)}^2 \big)
\| \nabla \bar{\mathbf{b}}^2 \|_{L^1(0,T; L^\infty(\Omega))} \\&\quad\times C_{\rho_0}
\| \sqrt{\rho_0} \phi \|_{L^{\infty}; L^2(\Omega)}
\big(\|\bar{\mathbf{b}}^2 \|_{L^{\infty}; L^2(\Omega)}+\|\bar{\mathbf{b}}^1 \|_{L^{\infty}; L^2(\Omega)} \big),\\
\int_{0}^{T} M_{18}(t) dt &\leq \big(\| A_2^T \|_{L^{\infty}(0,T \times \Omega)}^2 + \| A_1^T \|_{L^{\infty}(0,T \times \Omega)}^2 \big)
\| \nabla \bar{\mathbf{u}}^2 \|_{L^1(0,T; L^\infty(\Omega))} \\&\quad\times C_{\rho_0}
\| \sqrt{\rho_0} \phi \|_{L^{\infty}; L^2(\Omega)}
\big(\|\bar{\mathbf{b}}^2 \|_{L^{\infty}; L^2(\Omega)}+\|\bar{\mathbf{b}}^1 \|_{L^{\infty}; L^2(\Omega)} \big),\\
\int_{0}^{T} M_{19}(t) dt &\leq C_{\rho_0}
\| \sqrt{\rho_0} \phi \|_{L^{\infty}; L^2(\Omega)}
\|\bar{\mathbf{b}}^1 \|_{L^{\infty}; L^2(\Omega)}\| \nabla_{ \mathbf{u}^1}\mathbf{\psi} \|_{L^2(0,T \times \Omega)},\\
\int_{0}^{T} M_{20}(t) dt &\leq
\| A_1^T \|_{L^{\infty}(0,T \times \Omega)}^2
\|\nabla\bar{\mathbf{b}}^2 \|_{L^2(0,T \times \Omega)}(\|\nabla\bar{\mathbf{b}}^2 \|_{L^2(0,T \times \Omega)}\|+\|\nabla\bar{\mathbf{b}}^1 \|_{L^2(0,T \times \Omega)}\|),\\
\int_{0}^{T} M_{21}(t) dt &\leq
\| A_2^T \|_{L^{\infty}(0,T \times \Omega)}^2
\|\nabla\bar{\mathbf{b}}^2 \|_{L^2(0,T \times \Omega)}(\|\nabla\bar{\mathbf{b}}^2 \|_{L^2(0,T \times \Omega)}\|+\|\nabla\bar{\mathbf{b}}^1 \|_{L^2(0,T \times \Omega)}\|).
\end{align*}
So altogether, and using \eqref{eq-3}, this gives for all small enough \( T > 0 \),
\begin{align}
\sup_{t \in [0,T]}& \big\| (\sqrt{\rho_0} \psi, \delta \mathbf{b}, \sqrt{\rho_0} \delta\phi) \big\|_2^2 + \big\| (\nabla \delta \mathbf{u}, \nabla \delta \mathbf{b}, \sqrt{\rho_0} \delta \mu) \big\|_{L^2(0,T; L^2)}^2  \label{eq-2}\\ \notag
&\leq c(T) \big\| (\nabla \delta \mathbf{u}, \sqrt{\rho_0} \delta \mu) \big\|_{L^2(0,T; L^2)}^2.
\end{align}
By \eqref{eq-1}, we conclude that
\begin{equation*}\begin{split}
&\big\| (\nabla \delta \mathbf{u}, \nabla \delta \mathbf{b}, \sqrt{\rho_0} \delta \mu) \big\|_{L^2(0,T; L^2)}^2
\\&\leq c(T) \big\| (\nabla \delta \mathbf{u}, \nabla \delta \mathbf{b}, \sqrt{\rho_0} \delta \mu) \big\|_{L^2(0,T; L^2)}^2.
\end{split}
\end{equation*}
Hence \( \nabla \delta \mathbf{u} =  \nabla \delta \mathbf{b}=\delta \mu \equiv 0 \) on \( [0,T] \times \Omega \) if \( T \) is small enough. Plugging that information into \eqref{eq-2} yields
that
\begin{align*}
\big\| (\sqrt{\rho_0} \psi, \delta \mathbf{b}, \sqrt{\rho_0} \delta \phi) \big\|_{L^\infty(0,T; L^2)}^2 + \big\| (\nabla \psi, \nabla \delta \mathbf{b}, \sqrt{\rho_0} \delta \mu) \big\|_{L^2(0,T \times \Omega)}^2 = 0.
\end{align*}
It finally implies that \( \psi \equiv 0 \) on \( [0,T] \times \mathbb{T}^d \), and \eqref{eq-1} clearly yields \( \varphi \equiv 0 \). Therefore, for small enough \( T > 0 \),  we  conclude,  by \eqref{eq-1}, that
\begin{align*}
\bar{\mathbf{u}}^1 = \bar{\mathbf{u}}^2, \quad\bar{\mathbf{b}}^1 = \bar{\mathbf{b}}^2, \quad \bar{\phi}^1 = \bar{\phi}^2, \quad \bar{\mu}^1 = \bar{\mu}^2 \quad \text{on} \quad [0,T] \times \Omega.
\end{align*}
Reverting to Eulerian coordinates, we finally deduce that  two solutions of the system \eqref{1.1} coincide on \( [0,T] \times \Omega\).
\end{proof}
\section{Declarations}



\noindent{\bf Competing interests  }\

On behalf of all authors, the corresponding author states that there is no potential conflicts of interest with respect to the research of this article.

\noindent{\bf Authors' contributions   }\

Lingxin Jiang and Fuyi  Xu contributed equally to this work.

\noindent{\bf Funding   }\

Jiang and Xu were partially supported by the
National Natural Science Foundation of China (12326430), the  Natural Science Foundation of Shandong Province (ZR2026MS0022).

\noindent{\bf Availability of data and materials  }\

Data and materials  sharing not applicable to this article as no data and  materials
 were generated or analyzed during the current study.

\begin{center}

\end{center}

\end{document}